\providecommand{\U}[1]{\protect\rule{.1in}{.1in}}
\newtheorem{theorem}{Theorem}
\newtheorem{corollary}[theorem]{Corollary}
\declaretheoremstyle[bodyfont=\normalfont]{body}
\declaretheorem[name=Remark , style=body]{remark}
\declaretheorem[name=Lemma , style=body]{lemma}
\declaretheorem[name=Proposition , style=body]{proposition}
\declaretheorem[name=Definition , style=body]{definition}
\numberwithin{lemma}{section}
\numberwithin{proposition}{section}
\numberwithin{equation}{section}
\numberwithin{remark}{section}
\numberwithin{definition}{section}
\numberwithin{theorem}{section}
\begin{document}

\title{Long time existence results for bore-type initial data for BBM-Boussinesq systems}
\maketitle

\begin{abstract}
In this paper we deal with the long time existence for the Cauchy problem
associated to BBM-type Boussinesq systems of equations which are asymptotic
models for long wave, small amplitude gravity surface water waves. As opposed
to previous papers devoted to the long time existence issue, we consider
initial data with nontrivial behaviour at infinity which may be used to model
bore propagation.

\begin{description}
\item[Keywords] Boussinesq systems, long time existence, bore propagation;

\end{description}
\end{abstract}
\tableofcontents
\section{Introduction}

\subsection{The $abcd$ Boussinesq systems}

The following systems\ of PDEs were introduced in \cite{Bona1} as asymptotic
models for studying long wave, small amplitude gravity surface water waves:%

\begin{equation}
\left\{
\begin{array}
[c]{l}%
\left(  I-\varepsilon b\Delta\right)  \partial_{t}\eta+\operatorname{div}%
V+a\varepsilon\operatorname{div}\Delta V+\varepsilon\operatorname{div}\left(
\eta V\right)  =0,\\
\left(  I-\varepsilon d\Delta\right)  \partial_{t}V+\nabla\eta+c\varepsilon
\nabla\Delta\eta+\varepsilon V\cdot\nabla V=0.
\end{array}
\right.  \label{eq1}%
\end{equation}
In system \eqref{eq1}, $\varepsilon$ is a small parameter while for all
$\left(  t,x\right)  \in\left[  0,\infty\right)  \times\mathbb{R}^{n}$:
\[
\left\{
\begin{array}
[c]{c}%
\eta=\eta\left(  t,x\right)  \in\mathbb{R},\\
V=V\left(  t,x\right)  \in\mathbb{R}^{n}.
\end{array}
\right.
\]
The variable $\eta$ is an approximation of the deviation of the free surface
of the water from the rest state while $V$ is an approximation of the fluid
velocity. The above family of systems is derived from the classical
mathematical formulation of the water waves problem. In many applications the
water waves problem raises a significant number of difficulties from both a
theoretical and a numerical point of view. This is the reason why approximate
models have been established, each of them dealing with some particular
physical regimes.

The $abcd$ systems of equations deal with the so called Boussinesq regime
which is explained now. Consider a layer of incompressible, irrotational,
perfect fluid flowing through a canal with flat bottom represented by the
plane:%
\[
\{\left(  x,y,z\right)  :z=-h\},
\]
where $h>0$ and assume that the free surface resulting from an initial
perturbation of the steady state can be described as being the graph of a
function $\eta$ over the flat bottom. Also, consider the following quantities:
$A=\max_{x,y,t}\left\vert \eta\right\vert $ the maximum amplitude encountered
in the wave motion and $l$ the smallest wavelength for which the flow has
significant energy. Then, the Boussinesq regime is characterized by the
following parameters:%
\begin{equation}
\alpha=\frac{A}{h},\text{ }\beta=\left(  \frac{h}{l}\right)  ^{2},\text{
}S=\frac{\alpha}{\beta}, \label{Bous}%
\end{equation}
which are supposed to obey the following relations:%
\[
\alpha\ll1,\text{ }\beta\ll1\text{ and }S\approx1.
\]
Supposing for simplicity that $S=1$ and choosing $\varepsilon=\alpha=\beta$,
the systems \eqref{eq1} are derived back in \cite{Bona1} by a formal series
expansion and by neglecting the second and higher order terms. Actually, the
zeros on the right hand side of \eqref{eq1} can be viewed as the $O\left(
\varepsilon^{2}\right)  $ terms neglected in establishing \eqref{eq1}. The
parameters $a,b,c,d$ are also restricted by the following relation:%
\begin{equation}
a+b+c+d=\frac{1}{3}. \label{suma}%
\end{equation}
\ Asymptotic models taking into account general topographies of the bottom
were also derived, see \cite{Chaz}. In this situation, one has to furthermore
distinguish between two different regimes: small respectively strong
topography variations. In \cite{Chen} time variating bottoms are considered. A
systematic study of asymptotic models for the water waves problem along with
their rigorous justification can be found for instance in \cite{Lannes}. Let
us also point out that the only values of $n$ for which \eqref{eq1} is
physically relevant are $n=1,2$.

The study of the local well-posedness of the $abcd$ systems is the subject
under investigation in several papers beginning with \cite{Bona1} where,
besides deriving the family of systems \eqref{eq1}, it is shown that the
linearized equation near the null solution of \eqref{eq1} is well posed in two
generic cases, namely:%
\begin{align}
a  &  \leq0,\text{ }c\leq0,\text{ }b\geq0,\text{ }d\geq0\label{1111}\\
\text{or }a  &  =c\geq0\text{ and }b\geq0,\text{ }d\geq0. \label{2222}%
\end{align}
In \cite{Bona2}, the sequel of \cite{Bona1}, attention is given to the
well-posedness of the nonlinear systems and of some other higher order
(formally, more accurate) Boussinesq systems. Other papers that are dealing
with the well-posedness of the $abcd$ systems, for some cases that are not
treated in \cite{Bona2}, are \cite{Anh}, \cite{Bona3}, \cite{Saut2}.

The rigorous justification of the fact that systems \eqref{eq1} do approximate
the water waves problem has been carried on in \cite{Bona4} (see also
\cite{Lannes}). In this paper, the authors prove that the error estimate
between the solution of \eqref{eq1} and the water waves system at time $t$ is
of order $O\left(  \varepsilon^{2}t\right)  $. It is for this reason that on
time scales larger than $O\left(  \varepsilon^{-2}\right)  $ the solutions of
\eqref{eq1} stop being relevant approximations of the original problem.

The above error estimate result, leads to consider the so-called long time
existence\footnote{abbreviated l.t.e. in the following} problem which we will
explain now. First of all, global existence theory of solutions of \eqref{eq1}
is for the moment not at reach. Indeed, the only global results known are
available in dimension $1$ for the case:
\[
a=b=c=0,\text{ }d>0,
\]
which was studied by Amick in \cite{Amick} and Schonbek in \cite{Schon} and in
the more general case%
\[
b=d>0,\text{ }a\leq0,~c<0,
\]
which is somehow not satisfactory because one has to assume some smallness
condition on the initial data, see \cite{Bona2}. Second of all, as we
mentioned above, systems \eqref{eq1} are reliable from a practical point of
view only on time scales smaller than $O\left(  \varepsilon^{-2}\right)  $
and, as it turns out, on time intervals of order $O\left(  \varepsilon
^{-1}\right)  $ the error estimate remains of small order i.e. $O\left(
\varepsilon\right)  $. We also mention that on time scales of order $O\left(
\varepsilon^{-1/2}\right)  $, systems \eqref{eq1} behave like the linear wave
equation thus, existence results where the solution lives on such time scales
are not susceptible of having any predicting features. Also, at time scales of
order $O\left(  \varepsilon^{-1}\right)  $ the dispersive and nonlinear
effects will have an order one contribution to the wave's evolution.

All the above considerations have led people to consider the l.t.e. problem
which consists in constructing solutions of \eqref{eq1} for which the maximal
time of existence is of formal order $O\left(  \varepsilon^{-1}\right)  $.
Except for the one-dimensional previously mentioned cases, the first long time
existence result was obtained in \cite{Saut1} for the case
\[
a,c<0,\text{ }b,d>0
\]
and for the so called BBM-BBM case corresponding to%
\begin{equation}
a=c=0,\text{ }b,d>0. \label{BBMcase}%
\end{equation}
The proof adopted in this paper relies on the Nash--Moser theorem and involves
a loss of derivatives as well as a relatively high level of regularity. The
l.t.e. problem received another satisfactory answer in \cite{Saut3}, see also
\cite{Saut4}, where the case \eqref{1111} was treated and long time existence
for the Cauchy problem was systematically proved, provided that the initial
data lies in some Sobolev spaces. In \cite{Burtea1}, we used a different
method, from the one applied in \cite{Saut3} and \cite{Saut4}, which is based
on an energy method applied for spectrally localized equations, in order to
obtain l.t.e. results for most of the parameters verifying \eqref{1111}. In
particular we managed to lower the regularity assumptions needed in order to
develop the l.t.e theory. When considering small variations of the bottom
topography the methods presented in \cite{Saut4}, \cite{Saut3} and
\cite{Burtea1} adapt without to much extra effort. Regarding the strongly
variating bottoms, l.t.e. results can be found in \cite{Mes1}.

The difficulty in obtaining such results comes from the lack of symmetry of
\eqref{eq1} owing to the $\varepsilon\eta\operatorname{div}V$ term from the
first equation of the $abcd$-system . Because of the dispersive operators
$-\varepsilon b\Delta\partial_{t}+a\operatorname{div}\Delta$, $-\varepsilon
d\Delta\partial_{t}+c\nabla\Delta$, classical symmetrizing techniques for
hyperbolic systems of PDE's are very hard to apply.

The starting point of the present work is the paper \cite{Bona3} where results
pertaining to bore propagation are established for the BBM-BBM case, $\left(
\text{\ref{BBMcase}}\right)  $. The l.t.e. problem has been studied for
initial data belonging to Sobolev function spaces $H^{s}$ with the index of
regularity satisfying (at least):
\[
s>\frac{n}{2}+1.
\]
This corresponds to initial perturbation of the rest state that are
essentially localized in the space variables. Indeed, when $s$ is chosen as
above, $H^{s}$ is embedded in $\mathcal{C}_{0}$ the class of continuous
bounded functions which vanish at infinity. Because bore-type data manifest
non trivial behavior at infinity, one must of course change the functional
setting of the initial data. It is exactly this issue that we address in the
following: the l.t.e. problem for initial data which can be used to
successfully model bore-propagation. As far as we know, these are the first
l.t.e. results for data that are outside the Sobolev functional setting. More
precisely, the methods that we employ here will allow us to construct
solutions of some $abcd$ systems which live on time intervals of order
$O\left(  \varepsilon^{-1}\right)  $ where, for the $1$-dimensional case, we
might consider general disturbances modeled by continuous
functions\footnote{with some extra regularity on its derivative} $\eta
_{0}^{1D}$ such that:%
\[
\lim_{x\rightarrow\pm\infty}\eta_{0}^{1D}\left(  x\right)  =\eta_{\pm}.
\]
In the two dimensional setting, the situation in view is that of a
$2$-dimensional perturbation of the essentially $1$-dimensional situation,
more precisely we consider:%
\[
\left\{
\begin{array}
[c]{c}%
\eta_{0}^{2D}\left(  x,y\right)  =\eta_{0}^{1D}\left(  x\right)  +\phi\left(
x,y\right)  ,\\
V_{0}^{2D}\left(  x,y\right)  =\left(  u_{0}^{1D}\left(  x\right)  +\psi
_{1}\left(  x,y\right)  ,\psi_{2}\left(  x,y\right)  \right)
\end{array}
\right.
\]
where we ask%
\[
\lim_{\left\vert \left(  x,y\right)  \right\vert \rightarrow\infty}\left\vert
\phi\left(  x,y\right)  \right\vert =0\text{ and }\lim_{\left\vert \left(
x,y\right)  \right\vert \rightarrow\infty}\left\vert \psi_{i}\left(
x,y\right)  \right\vert =0\text{ for }i=1,2.
\]
\ $\ $Let us point out that in some sense, the change of the functional
setting in order to study bore propagation corresponds in solving a Neumann
problem. Indeed, solving $\left(  \text{\ref{eq1}}\right)  $ with an initial
data in $\mathcal{C}_{0}$ amounts in solving a Dirichlet-type problem whereas
solving it with initial data which is bounded with its gradient belonging to
$\mathcal{C}_{0}$ amounts to solving a Neumann-type problem.

\subsection{The main results}

In the present paper we will focus our attention on the so called BBM-type
Boussinesq systems of equations:
\begin{equation}
\left\{
\begin{array}
[c]{l}%
\left(  I-\varepsilon b\Delta\right)  \partial_{t}\bar{\eta}%
+\operatorname{div}\bar{V}+\varepsilon\operatorname{div}\left(  \bar{\eta}%
\bar{V}\right)  =0,\\
\left(  I-\varepsilon d\Delta\right)  \partial_{t}\bar{V}+\nabla\bar{\eta
}+\varepsilon\bar{V}\cdot\nabla\bar{V}=0,
\end{array}
\right.  \label{weaklydispersive1}%
\end{equation}
where the parameters $b$, $d$ obey:%
\[
b,d\geq0.
\]
We address the long time existence problem for the general $\left(
\text{\ref{weaklydispersive1}}\right)  $ system with initial data that can be
used to model bore-type waves. Before stating our results, let us fix the
functional framework where we are going to construct our solutions.

We fix two functions $\chi$ and $\varphi$ satisfying:%
\[
\forall\xi\in\mathbb{R}^{n}\text{, \ }\chi(\xi)+\sum_{j\geq0}\varphi(2^{-j}%
\xi)=1\text{,}%
\]
(see Proposition \ref{diadic} from the Appendix) and let us denote by $h$
respectively $\tilde{h}$ their Fourier inverses. For all $u\in\mathcal{S}%
^{\prime}$, the nonhomogeneous dyadic blocks are defined as follows:%
\begin{equation}
\left\{
\begin{array}
[c]{l}%
\Delta_{j}u=0\text{ \ if \ }j\leq-2,\\
\Delta_{-1}u=\chi\left(  D\right)  u=\tilde{h}\star u,\\
\Delta_{j}u=\varphi\left(  2^{-j}D\right)  u=2^{jd}\int_{\mathbb{R}^{n}%
}h\left(  2^{q}y\right)  u\left(  x-y\right)  dy\text{ \ if \ }j\geq0.
\end{array}
\right.  \label{diadicNeomogen}%
\end{equation}
Let us define now the nonhomogeneous Besov spaces.

\begin{definition}
Let $s\in\mathbb{R}$, $\left(  p,r\right)  \in\left[  1,\infty\right]  $. The
Besov space $B_{p,r}^{s}\left(  \mathbb{R}^{n}\right)  $ is the set of
tempered distributions $u\in\mathcal{S}^{\prime}$ such that:%
\[
\left\Vert u\right\Vert _{B_{p,r}^{s}}:=\left\Vert \left(  2^{js}\left\Vert
\Delta_{j}u\right\Vert _{L^{p}}\right)  _{j\in\mathbb{Z}}\right\Vert
_{\ell^{r}(\mathbb{Z)}}<\infty.
\]

\end{definition}

\bigskip In all that follows, unless otherwise mentioned, the $j$-subscript
for a tempered distribution is reserved for denoting the frequency localized
distribution i.e.:%
\begin{equation}
u_{j}\overset{not.}{=}\Delta_{j}u\text{.} \label{notinspired}%
\end{equation}

\begin{remark}
Taking advantage of the Fourier-Plancherel theorem and using $\left(
\text{\ref{211}}\right)  $ one sees that the classical Sobolev spaces $H^{s}$
coincide with $B_{2,2}^{s}$.
\end{remark}

\begin{remark}
\label{observatieH}Let us suppose that $s\in\mathbb{R}^{+}\backslash
\mathbb{N}$. The space $B_{\infty,\infty}^{s}$ coincides with the H\"{o}lder
space $\mathcal{C}^{\left[  s\right]  ,s-\left[  s\right]  }$ of bounded
functions $u\in L^{\infty}$ whose derivatives of order $\left\vert
\alpha\right\vert \leq\left[  s\right]  $ are bounded and satisfy%
\[
\left\vert \partial^{\alpha}u\left(  x\right)  -\partial^{\alpha}u\left(
y\right)  \right\vert \leq C\left\vert x-y\right\vert ^{s-\left[  s\right]
}\text{ for }\left\vert x-y\right\vert \leq1.
\]

\end{remark}

For all $s\in\mathbb{R}$, we define\footnote{Here, we use the convention
$\operatorname*{sgn}\left(  x\right)  =\frac{x}{\left\vert x\right\vert }$ for
$x\not =0$ and $\operatorname*{sgn}\left(  0\right)  =0$.}:%
\begin{equation}
\left\{
\begin{array}
[c]{c}%
s_{b}=s+\operatorname*{sgn}\left(  b\right)  ,\\
s_{d}=s+\operatorname*{sgn}(c).
\end{array}
\right.  \label{relaties2}%
\end{equation}
Let us denote by
\[
X_{b,d,r}^{s}\left(  \mathbb{R}^{n}\right)  =B_{2,r}^{s_{b}}\left(
\mathbb{R}^{n}\right)  \times\left(  B_{2,r}^{s_{d}}\left(  \mathbb{R}%
^{n}\right)  \right)  ^{n}\text{.}%
\]
For any $\varepsilon>0$, we consider the norm:%
\[
\left\{
\begin{array}
[c]{c}%
\left\Vert \left(  \eta,V\right)  \right\Vert _{X_{b,d,r}^{s,\varepsilon}%
}=\left\Vert \left(  2^{js}U_{j}\left(  \eta,V\right)  \right)  _{j\in
\mathbb{Z}}\right\Vert _{\ell^{r}\left(  \mathbb{Z}\right)  }\text{ where }\\
U_{j}^{2}\left(  \eta,V\right)  =%
{\displaystyle\int\limits_{\mathbb{R}^{n}}}
\left(  \left\vert \eta_{j}\right\vert ^{2}+\varepsilon b\sum
\limits_{k=\overline{1,n}}\left\vert \partial_{k}\eta_{j}\right\vert ^{2}%
+\sum\limits_{k=\overline{1,n}}\left\vert V_{j}^{k}\right\vert ^{2}%
+\varepsilon d\sum\limits_{k,l=\overline{1,n}}\left\vert \partial_{l}V_{j}%
^{k}\right\vert ^{2}\right)  .
\end{array}
\right.
\]
The space%
\[
E_{b,d,r}^{s}\left(  \mathbb{R}^{n}\right)  =\left\{  \left(  \eta,V\right)
\in L^{\infty}\left(  \mathbb{R}^{n}\right)  \times\left(  L^{\infty}\left(
\mathbb{R}^{n}\right)  \right)  ^{n}:\left(  \partial_{k}\eta,\partial
_{k}V\right)  \in X_{b,d,r}^{s-1}\text{ }\forall k\in\overline{1,n}\right\}
\]
endowed with the norm%
\[
\left\Vert \left(  \eta,V\right)  \right\Vert _{E_{b,d,r}^{s,\varepsilon}%
}=\left\Vert \left(  \eta,V\right)  \right\Vert _{L^{\infty}}+\left(
\sum_{k\in\overline{1,n}}\left\Vert \left(  \partial_{k}\eta,\partial
_{k}V\right)  \right\Vert _{X_{b,d,r}^{s-1,\varepsilon}}^{2}\right)
^{\frac{1}{2}}%
\]
is a Banach space. An important aspect is that the space $E_{b,d,r}^{s}\left(
\mathbb{R}\right)  $ admits functions that manifest nontrivial behavior at
infinity, see Remark \ref{Obs}. Our first result, pertaining to the
$1$-dimensional case is formulated in the following theorem.

\begin{theorem}
\label{Teorema2}Let us consider $s\in\mathbb{R},$ $r\in\lbrack1,\infty)$ such
that $s>\frac{3}{2}$ or $s=\frac{3}{2}$ and $r=1$. Let us consider $\left(
\eta_{0},u_{0}\right)  \in E_{b,d,r}^{s}\left(  \mathbb{R}\right)  $. Then,
there exist two real numbers $\varepsilon_{0}$, $C$ both depending on $s,b,d,$
and on $\left\Vert \left(  \eta_{0},u_{0}\right)  \right\Vert _{E_{b,d,r}%
^{s,1}}$ and a numerical constant $\tilde{C}$ such that the following holds
true. For any $\varepsilon\leq\varepsilon_{0}$, System $\left(
\text{\ref{weaklydispersive1}}\right)  $ supplemented with the initial data
$\left(  \eta_{0},u_{0}\right)  $, admits an unique solution $\left(
\bar{\eta}^{\varepsilon},\bar{u}^{\varepsilon}\right)  \in\mathcal{C}\left(
\left[  0,\frac{C}{\varepsilon}\right]  ,E_{b,d,r}^{s}\right)  $. Moreover,
the following estimate holds true:%
\[
\sup_{t\in\left[  0,\frac{C}{\varepsilon}\right]  }\left\Vert \left(
\bar{\eta}^{\varepsilon}\left(  t\right)  ,\bar{u}^{\varepsilon}\left(
t\right)  \right)  \right\Vert _{E_{b,d,r}^{s,\varepsilon}}+\sup_{t\in\left[
0,\frac{C}{\varepsilon}\right]  }\left\Vert \partial_{t}\bar{\eta
}^{\varepsilon}\left(  t\right)  \right\Vert _{L^{\infty}}\leq\tilde
{C}\left\Vert \left(  \eta_{0},u_{0}\right)  \right\Vert _{E_{b,d,r}%
^{s,\varepsilon}}.
\]

\end{theorem}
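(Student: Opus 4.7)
The plan is to adapt the spectrally localized energy method developed in \cite{Burtea1} to the new functional setting $E_{b,d,r}^{s}(\mathbb{R})$. The distinctive feature of this setting is that the solution itself only lies in $L^{\infty}$, while its derivative belongs to an $L^{2}$-based Besov-type space; the energy will therefore be placed on the \emph{differentiated} system, and the $L^{\infty}$ part of the norm has to be recovered separately. Writing $\tilde{\bar{\eta}}=\partial_{x}\bar{\eta}$ and $\tilde{\bar{u}}=\partial_{x}\bar{u}$ and differentiating $(\ref{weaklydispersive1})$ yields a quasilinear system for $(\tilde{\bar{\eta}},\tilde{\bar{u}})$ whose coefficients involve only the $L^{\infty}$-quantities $\bar{\eta}$ and $\bar{u}$, precisely the structure to which paradifferential calculus applies. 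I would construct approximate solutions $(\bar{\eta}^{N},\bar{u}^{N})$ via a Friedrichs truncation of the nonlinearities (replacing them by $J_{N}(J_{N}\bar{\eta}\cdot J_{N}\bar{u})$ for the truncation projector $J_{N}=\sum_{j\leq N}\Delta_{j}$): this reduces the equations to ODEs in $H^{\sigma}$, to which the a priori estimates below will apply uniformly in $N$.

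For the core a priori estimate, I apply $\Delta_{j}$ to each equation of the differentiated system, take the $L^{2}$ inner product of the first with $\tilde{\bar{\eta}}_{j}$ and of the second with $\tilde{\bar{u}}_{j}$, and add. The symmetric cross terms $\int\partial_{x}\tilde{\bar{u}}_{j}\cdot\tilde{\bar{\eta}}_{j}+\int\partial_{x}\tilde{\bar{\eta}}_{j}\cdot\tilde{\bar{u}}_{j}$ cancel by integration by parts, leaving
\[
\tfrac{1}{2}\tfrac{d}{dt}U_{j}^{2}(\tilde{\bar{\eta}},\tilde{\bar{u}})=\varepsilon R_{j},
\]
where $R_{j}$ collects contributions from $\partial_{x}(\tilde{\bar{\eta}}\bar{u}+\bar{\eta}\tilde{\bar{u}})$ and $\partial_{x}(\bar{u}\,\tilde{\bar{u}})$. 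Decomposing each nonlinearity through Bony's paraproduct and using standard commutator estimates, which are applicable here because the index condition $s>3/2$ or $s=3/2,\,r=1$ ensures the embedding $B_{2,r}^{s-1}\hookrightarrow L^{\infty}$, I expect a bound of the form
\[
|R_{j}|\leq C\,c_{j}^{2}\,2^{-2j(s-1)}\,\|(\bar{\eta},\bar{u})\|_{E_{b,d,r}^{s,\varepsilon}}\,\|(\tilde{\bar{\eta}},\tilde{\bar{u}})\|_{X_{b,d,r}^{s-1,\varepsilon}}^{2}
\]
with $(c_{j})$ of unit $\ell^{r}$-norm. Multiplying by $2^{2j(s-1)}$, taking the $\ell^{r/2}$-norm and integrating in time yields the Besov part of the estimate, with the crucial $\varepsilon$-factor that is responsible for the $O(\varepsilon^{-1})$ lifespan.

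The $L^{\infty}$ control of $(\bar{\eta},\bar{u})$ itself is obtained by a Duhamel representation combined with the fact that the linear semigroup $S(t)$ associated to $(\ref{weaklydispersive1})$ is bounded on $L^{\infty}$ uniformly in $\varepsilon$ and $t$, which is a standard property of BBM-type dispersion in dimension one; the nonlinear contribution in Duhamel is of size $\varepsilon\|(\bar{\eta},\bar{u})\|_{E_{b,d,r}^{s,\varepsilon}}^{2}$, so integration over $[0,C/\varepsilon]$ keeps it $O(1)$. The bound on $\partial_{t}\bar{\eta}$ is then immediate from the first equation of $(\ref{weaklydispersive1})$: using that $(I-\varepsilon b\partial_{x}^{2})^{-1}$ is an $L^{\infty}$-contraction (convolution with a positive $L^{1}$-kernel of mass $1$),
\[
\|\partial_{t}\bar{\eta}\|_{L^{\infty}}\leq\|\partial_{x}\bar{u}\|_{L^{\infty}}+\varepsilon\|\partial_{x}(\bar{\eta}\bar{u})\|_{L^{\infty}},
\]
and the right-hand side is controlled by $\|(\bar{\eta},\bar{u})\|_{E_{b,d,r}^{s,\varepsilon}}$ via the same embedding $B_{2,r}^{s-1}\hookrightarrow L^{\infty}$.

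The main obstacle is the interaction of two difficulties: the lack of symmetry of $(\ref{weaklydispersive1})$, coming from $\varepsilon\partial_{x}(\bar{\eta}\bar{u})$, and the non-$L^{2}$ nature of bore-type data, namely that $\bar{\eta}$ is only $L^{\infty}$ and hence many integrations by parts available in the Sobolev setting are forbidden. The remedy is to organize Bony's decomposition so that $\bar{\eta}$ is never paired with a term it cannot absorb in duality, and to treat the low-frequency block $\Delta_{-1}$ through Bernstein-type estimates applied to the derivative rather than to the function itself. Once the a priori estimates close on $[0,C/\varepsilon]$, passage to the limit $N\to\infty$ follows by standard compactness arguments, uniqueness is obtained through a contraction argument in the weaker norm $E_{b,d,r}^{s-1,\varepsilon}$, and time continuity in $E_{b,d,r}^{s}$ follows from a density argument.
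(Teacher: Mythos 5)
Your architecture --- energy estimates on the differentiated system plus a separate $L^{\infty}$ recovery --- is a genuinely different route from the paper, which instead splits the data into $\Delta_{-1}\left(  \eta_{0},u_{0}\right)  $ and $\left(  I-\Delta_{-1}\right)  \left(  \eta_{0},u_{0}\right)  $, propagates the bounded low-frequency block exactly by the non-dispersive acoustic system via d'Alembert's formula $\left(  \text{\ref{unde_explicit}}\right)  $, and treats the remainder, which genuinely lies in $X_{b,d,r}^{s}$, with the $L^{2}$-based long-time theory (Corollary \ref{Corolar}). As written, however, your proof has a gap at its core: the energy estimate does not close with the claimed $\varepsilon$ prefactor. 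The system is not symmetric; in the differentiated first equation the term $\varepsilon\partial_{x}\left(  \bar{\eta}\,\partial_{x}\bar{u}\right)  $ tested against $\tilde{\bar{\eta}}_{j}=\Delta_{j}\partial_{x}\bar{\eta}$ produces, after commutation, $\varepsilon\int\bar{\eta}\,\partial_{x}\tilde{\bar{u}}_{j}\,\tilde{\bar{\eta}}_{j}$, which has no counterpart in the second equation to cancel against. Integration by parts leaves $\varepsilon\int\bar{\eta}\,\tilde{\bar{u}}_{j}\,\partial_{x}\tilde{\bar{\eta}}_{j}$, and the only way to absorb $\partial_{x}\tilde{\bar{\eta}}_{j}$ into the energy is through $\varepsilon b\left\Vert \partial_{x}\tilde{\bar{\eta}}_{j}\right\Vert _{L^{2}}^{2}$, at the cost of a factor $\left(  \varepsilon b\right)  ^{-1/2}$: the net gain is only $\sqrt{\varepsilon}$ (and nothing if $b=0$). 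This is precisely the $\sqrt{\varepsilon}$ in the paper's unweighted estimate $\left(  \text{\ref{Gronwal0}}\right)  $, which yields only an $O\left(  \varepsilon^{-1/2}\right)  $ lifespan; the paper removes it with the weighted energy carrying the density $1+\varepsilon\left(  h+\beta\eta\right)  $ on the velocity part (the $\alpha=1$ computation leading to $\left(  \text{\ref{final1}}\right)  $). Your plain $L^{2}$ pairing has no such device, so the asserted bound on $R_{j}$ with a full $\varepsilon$ factor is not justified.

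A second problem is the $L^{\infty}$ recovery: you invoke uniform-in-time $L^{\infty}$ boundedness of the linearized BBM--Boussinesq group, but this is not a standard fact and is very likely false. The propagator is $e^{\pm it\omega\left(  D\right)  }$ with $\omega\left(  \xi\right)  =\left\vert \xi\right\vert \left(  1+\varepsilon b\xi^{2}\right)  ^{-1/2}\left(  1+\varepsilon d\xi^{2}\right)  ^{-1/2}$, a genuinely nonlinear phase, and such dispersive groups are generically unbounded on $L^{\infty}$ uniformly in $t$; the kernel bounds of \cite{Bona3} concern the stationary multipliers $\xi_{l}/\left(  1+\left\vert \xi\right\vert ^{2}\right)  $, not the time-dependent group. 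Relatedly, your Friedrichs scheme is set in $H^{\sigma}$, yet bore data such as $\tanh$ belongs to no $H^{\sigma}$ and its truncation $J_{N}\tanh$ is still non-decaying, so the ambient space for the approximate ODEs must itself accommodate bounded non-$L^{2}$ functions. The paper's low/high splitting dissolves both difficulties at once: the low-frequency block is bounded with compactly supported Fourier transform and is transported with uniform $L^{\infty}$ bounds by the explicit non-dispersive wave solution (the dispersive corrections being dumped into the bounded forcing $f_{L},g_{L}$), while everything estimated energetically lives in $L^{2}$-based Besov spaces from the start. You would need either to import the paper's weighted energy and frequency splitting, or to supply a proof of the semigroup bound and a symmetrization of the $\varepsilon\partial_{x}\left(  \bar{\eta}\,\partial_{x}\bar{u}\right)  $ term, before your argument closes.
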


\begin{remark}
\label{Obs}\bigskip Let us give an example of function that fits into our
framework but is not covered by previous works dedicated to the long time
existence problem. A reasonable initial data for modeling bores would be:%
\[
\eta_{0}\left(  x\right)  =\tanh\left(  x\right)  :=\frac{e^{x}-e^{-x}}%
{e^{x}+e^{-x}}.
\]
which is smooth and manifests nontrivial behavior at $\pm\infty$, i.e.%
\[
\lim_{x\rightarrow\infty}\eta_{0}\left(  x\right)  =1\text{ and }%
\lim_{x\rightarrow-\infty}\eta_{0}\left(  x\right)  =-1\text{.}%
\]
One can verify that $\tanh\in%
{\displaystyle\bigcap\limits_{s\geq0}}
E_{b,d,r}^{s,1}$ but does not belong to any Sobolev space $H^{s}$.
\end{remark}

Let us consider $\sigma\geq\frac{1}{2}$, $s\geq1$ with the convention that
whenever there is equality in one of the previous relations then $r=1$. We
denote by $M_{b,d,r}^{\sigma,s}$ the space of $\left(  \eta,V\right)
\in\mathcal{C}\left(  \mathbb{R}^{2}\right)  \times\left(  \mathcal{C}\left(
\mathbb{R}^{2}\right)  \right)  ^{2}$ such that there exists $\left(
\eta^{1D},V^{1D}\right)  \in E_{b,d,r}^{\sigma}\left(  \mathbb{R}\right)  $
and $\left(  \eta^{2D},V^{2D}\right)  \in X_{b,d,r}^{s}\left(  \mathbb{R}%
^{2}\right)  $ for which
\begin{equation}
\left\{
\begin{array}
[c]{c}%
\eta\left(  x,y\right)  =\eta^{1D}\left(  x\right)  +\eta^{2D}\left(
x,y\right)  ,\\
V\left(  x,y\right)  =\left(  V^{1D}\left(  x\right)  +V_{1}^{2D}\left(
x,y\right)  ,V_{2}^{2D}\left(  x,y\right)  \right)
\end{array}
\right.  \label{desc}%
\end{equation}
for all $\left(  x,y\right)  \in$ $\mathbb{R}^{2}$. Let us introduce
$i:E_{b,d,r}^{\sigma}\left(  \mathbb{R}\right)  \rightarrow\mathcal{C}\left(
\mathbb{R}^{2}\right)  \times\left(  \mathcal{C}\left(  \mathbb{R}^{2}\right)
\right)  ^{2}$ such that for all $\left(  x,y\right)  \in\mathbb{R}^{2}$ we
have:
\[
i\left(  \left(  \eta,u\right)  \right)  \left(  x,y\right)  =\left(
\eta\left(  x\right)  ,\left(  u\left(  x\right)  ,0\right)  \right)  .
\]
Of course, because of the fact $i\left(  E_{b,d,r}^{\sigma}\left(
\mathbb{R}\right)  \right)  \cap X_{b,d,r}^{s}\left(  \mathbb{R}^{2}\right)
=\left\{  0\right\}  $, the functions appearing in the decomposition $\left(
\text{\ref{desc}}\right)  $ are unique. For all $\varepsilon>0$, let us
consider on $M_{b,d,r}^{\sigma,s}$, the norm%
\[
\left\Vert \left(  \eta,V\right)  \right\Vert _{M_{b,d,r}^{\sigma
,s,\varepsilon}}=\left\Vert \left(  \eta^{1D},V^{1D}\right)  \right\Vert
_{E_{b,d,r}^{\sigma,\varepsilon}\left(  \mathbb{R}\right)  }+\left\Vert
\left(  \eta^{2D},V^{2D}\right)  \right\Vert _{X_{b,d,r}^{s,\varepsilon
}\left(  \mathbb{R}^{2}\right)  }.
\]
It is easy to see that $\left(  M_{b,d,r}^{\sigma,s},\left\Vert \cdot
\right\Vert _{M_{b,d,r}^{\sigma,s,\varepsilon}}\right)  $ is a Banach space.
We can now formulate the result pertaining to the $2$-dimensional case:

\begin{theorem}
\label{Teorema3}Let us consider $s,\sigma\in\mathbb{R},$ $r\in\lbrack
1,\infty)$ such that%
\begin{equation}
s>2\text{ or }s=2\text{ and }r=1. \label{s}%
\end{equation}
and%
\begin{equation}
\sigma>s+\frac{3}{2}\text{ and }\sigma-\frac{1}{2}\in\mathbb{R}\backslash
\mathbb{N}\text{.} \label{sigma}%
\end{equation}
Let us consider $\left(  \eta_{0},u_{0}\right)  \in E_{b,d,r}^{\sigma}\left(
\mathbb{R}\right)  $ and $\left(  \phi,\psi\right)  \in X_{b,d,r}^{s}\left(
\mathbb{R}^{2}\right)  $. Then, there exist two real numbers $\varepsilon_{0}%
$, $C$ both depending on $s,b,d,$ and on $\left\Vert \left(  \eta_{0}%
,u_{0}\right)  \right\Vert _{E_{b,d,r}^{s,1}}+\left\Vert \left(  \phi
,\psi\right)  \right\Vert _{X_{b,d,r}^{s,1}\left(  \mathbb{R}^{2}\right)  }$
and a numerical constant $\tilde{C}$ such that the following holds true. For
any $\varepsilon\leq\varepsilon_{0}$, system $\left(
\text{\ref{weaklydispersive1}}\right)  $ supplemented with the initial data%
\[
\left\{
\begin{array}
[c]{l}%
\bar{\eta}_{0}\left(  x,y\right)  =\eta_{0}\left(  x\right)  +\phi\left(
x,y\right)  ,\\
\bar{V}_{0}\left(  x,y\right)  =\left(  u_{0}\left(  x\right)  ,0\right)
+\psi\left(  x,y\right)  ,
\end{array}
\right.
\]
admits an unique solution $\left(  \bar{\eta}^{\varepsilon},\bar
{V}^{\varepsilon}\right)  \in\mathcal{C}\left(  \left[  0,\frac{C}%
{\varepsilon}\right]  ,M_{b,d,r}^{\sigma,s}\right)  $. Moreover, the following
estimate holds true:%
\[
\sup_{t\in\left[  0,\frac{C}{\varepsilon}\right]  }\left(  \left\Vert \left(
\bar{\eta}^{\varepsilon}\left(  t\right)  ,\bar{V}^{\varepsilon}\left(
t\right)  \right)  \right\Vert _{M_{b,d,r}^{\sigma,s,\varepsilon}}\right)
+\sup_{t\in\left[  0,\frac{C}{\varepsilon}\right]  }\left(  \left\Vert
\partial_{t}\bar{\eta}^{\varepsilon}\left(  t\right)  \right\Vert _{L^{\infty
}}\right)  \leq\tilde{C}\left(  \left\Vert \left(  \eta_{0},u_{0}\right)
\right\Vert _{E_{b,d,r}^{\sigma,\varepsilon}}+\left\Vert \left(  \phi
,\psi\right)  \right\Vert _{X_{b,d,r}^{s,\varepsilon}}\right)
\]

\end{theorem}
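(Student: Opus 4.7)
The plan is to exploit the decomposition built into the space $M_{b,d,r}^{\sigma,s}$ and split the solution as $\bar{\eta}=\eta^{1D}+\eta^{2D}$, $\bar{V}=(V^{1D},0)+V^{2D}$, where the one-dimensional bore part $(\eta^{1D},V^{1D})$ is prescribed to solve the $1$D system \eqref{weaklydispersive1} with initial data $(\eta_0,u_0)$, and the two-dimensional perturbation $(\eta^{2D},V^{2D})$ is a new unknown living in $X_{b,d,r}^{s}(\mathbb{R}^2)$ with initial data $(\phi,\psi)$. First I would invoke Theorem \ref{Teorema2} with regularity index $\sigma$ to obtain $(\eta^{1D},V^{1D})\in \mathcal{C}([0,C_1/\varepsilon],E_{b,d,r}^{\sigma}(\mathbb{R}))$ together with the quantitative estimate controlling its $E_{b,d,r}^{\sigma,\varepsilon}$--norm by that of the initial data. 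This uses $\sigma$ rather than $s$, which is essential because the $1$D bore will later have to act as a coefficient for a $2$D problem of regularity $s$.

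Next I would derive the perturbation system. Substituting the ansatz into \eqref{weaklydispersive1} and subtracting the $1$D equations satisfied by $(\eta^{1D},V^{1D})$ gives a closed $2$D system for $(\eta^{2D},V^{2D})$ of the form
\begin{equation*}
\left\{
\begin{array}{l}
(I-\varepsilon b\Delta)\partial_t\eta^{2D}+\operatorname{div}V^{2D}+\varepsilon\operatorname{div}\bigl(\eta^{1D}V^{2D}+\eta^{2D}(V^{1D},0)+\eta^{2D}V^{2D}\bigr)=0,\\
(I-\varepsilon d\Delta)\partial_t V^{2D}+\nabla\eta^{2D}+\varepsilon\bigl(V^{1D}\partial_x V^{2D}+V^{2D}\cdot\nabla(V^{1D},0)+V^{2D}\cdot\nabla V^{2D}\bigr)=0,
\end{array}\right.
\end{equation*}
with initial data $(\phi,\psi)$. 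This is a BBM-type Boussinesq system on $\mathbb{R}^2$ of exactly the same structure as \eqref{weaklydispersive1} but with additional linear-in-unknown zero and first-order terms whose coefficients are the prescribed $1$D bore. I would then run the spectrally localized energy method of \cite{Burtea1}: apply $\Delta_j$ to each equation, test against $\Delta_j \eta^{2D}$ and $\Delta_j V^{2D}$ in the $U_j$-weighted inner product, integrate by parts, and sum over $j$ in $\ell^{r}(\mathbb{Z})$ with weight $2^{js}$ to produce an estimate of the form $\frac{d}{dt}\|(\eta^{2D},V^{2D})\|_{X_{b,d,r}^{s,\varepsilon}}\leq \varepsilon \Phi\bigl(\|(\eta^{1D},V^{1D})\|_{E_{b,d,r}^{\sigma,\varepsilon}}, \|(\eta^{2D},V^{2D})\|_{X_{b,d,r}^{s,\varepsilon}}\bigr)$, so that Gronwall gives a life-time of order $\varepsilon^{-1}$ for the perturbation too. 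Existence of $(\eta^{2D},V^{2D})$ is obtained by a Friedrichs-type truncation scheme or an iteration at the level of the perturbation system, together with a uniqueness estimate in a weaker norm.

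The main obstacle, and the reason the hypothesis $\sigma>s+\tfrac{3}{2}$ with $\sigma-\tfrac{1}{2}\notin\mathbb{N}$ is imposed, lies in the product and commutator estimates mixing the non-decaying $1$D coefficients $\eta^{1D}(x)$, $V^{1D}(x)$ with the $2$D perturbation. The terms $\operatorname{div}(\eta^{1D}V^{2D})$, $\operatorname{div}(\eta^{2D}(V^{1D},0))$ and the analogous ones in the velocity equation do not fall under the standard Bony paradifferential calculus because $\eta^{1D}, V^{1D}$ are not in any $L^{2}$-based Besov space on $\mathbb{R}^2$. To handle them I would exploit that these coefficients depend only on $x$ and that, by the $1$D embedding $B_{2,r}^{\alpha}(\mathbb{R})\hookrightarrow B_{\infty,\infty}^{\alpha-1/2}(\mathbb{R})$ and Remark \ref{observatieH}, the $E_{b,d,r}^{\sigma}(\mathbb{R})$ control on $(\eta^{1D},V^{1D})$ transfers into a Hölder bound of order strictly greater than $s+1$ on their derivatives, so that they act as multipliers on $B_{2,r}^{s}(\mathbb{R}^{2})$ and generate controllable commutators $[\Delta_j,\eta^{1D}]\partial_k V^{2D}$ through Taylor-type expansions. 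The condition $\sigma-\tfrac{1}{2}\notin\mathbb{N}$ is precisely what makes the Besov-to-Hölder embedding effective without an $r=1$ endpoint restriction.

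Finally, once uniform estimates on the perturbation are secured on $[0,C_2/\varepsilon]$, taking $C=\min(C_1,C_2)$ gives both components simultaneously on $[0,C/\varepsilon]$. The bound on $\|\partial_t \bar{\eta}^{\varepsilon}\|_{L^{\infty}}$ follows from the first equation of \eqref{weaklydispersive1}, inverting $(I-\varepsilon b\Delta)$ and using that the right-hand side is controlled in a Besov space embedded in $L^{\infty}$ thanks to the assumption $s>2$. Uniqueness in $M_{b,d,r}^{\sigma,s}$ is then obtained by noting that the decomposition \eqref{desc} is unique (since $i(E_{b,d,r}^{\sigma}(\mathbb{R}))\cap X_{b,d,r}^{s}(\mathbb{R}^{2})=\{0\}$), reducing to uniqueness for the $1$D system (Theorem \ref{Teorema2}) and then for the linearly perturbed $2$D system.
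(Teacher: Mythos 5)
Your proposal follows essentially the same route as the paper: solve the 1D problem at regularity $\sigma$ via Theorem \ref{Teorema2}, lift the non-decaying 1D coefficients to $B^{s}_{\infty,r}(\mathbb{R}^{2})$ through the Besov-to-H\"older embedding (which is exactly where \eqref{sigma} enters), and treat the resulting perturbation system for $(\eta^{2D},V^{2D})$ — which the paper recognizes as an instance of $\mathcal{S}_{\varepsilon}(\mathcal{D})$ already covered by Theorem \ref{Teorema1.1}, rather than re-running the localized energy method from scratch. The only step you gloss over is how, in the uniqueness argument, one extracts the 1D equation satisfied by the difference of the 1D components of two solutions (the paper does this by testing against $\rho^{m}(t,x,y)=\frac{1}{m}\tilde{\rho}(t,x)\chi(y/m)$ and letting $m\rightarrow\infty$), but the overall strategy is identical.
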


The results presented in Theorems \ref{Teorema2}, \ref{Teorema3} are a
by-product of a general result that we obtain later in the paper (see Theorem
\ref{Teorema1} ). The method of proof consists of conveniently spliting the
initial data into two parts and of performing energy estimates on a slightly
more general system than $\left(  \text{\ref{weaklydispersive1}}\right)  $.
The estimates are a refined version of those obtained in \cite{Burtea1} and
they allow us to handle the fact that the bore type functions do not belong to
$L^{2}$.

\subsection{Notations}

Let us introduce some notations. For any vector field $U:\mathbb{R}%
^{n}\rightarrow\mathbb{R}^{n}$ we denote by $\nabla U:\mathbb{R}%
^{n}\rightarrow\mathcal{M}_{n}(\mathbb{R})$ and by the $n\times n$ matrices
defined by:%
\[
\left(  \nabla U\right)  _{ij}=\partial_{i}U^{j},
\]
In the same manner we define $\nabla^{2}U:\mathbb{R}^{n}\rightarrow
\mathbb{R}^{n}\times\mathbb{R}^{n}\times\mathbb{R}^{n}$ as:%
\[
\left(  \nabla^{2}U\right)  _{ijk}=\partial_{ij}^{2}U^{k}.
\]
We will suppose that all vectors appearing are column vectors and thus the
(classical) product between a matrix field $A$ and a vector field $U$ will be
the vector\footnote{From now on we will use the Einstein summation convention
over repeted indices.}:%
\[
\left(  AU\right)  ^{i}=A_{ij}U^{j}.
\]
We will often write the contraction operation between $\nabla^{2}U$ and a
vector field $V$ by%
\[
\left(  \nabla^{2}U:V\right)  _{ij}=\partial_{ij}^{2}U^{k}V^{k}%
\]
If $U,V:\mathbb{R}^{n}\rightarrow\mathbb{R}^{n}$ are two vector fields and
$A,B:\mathbb{R}^{n}\rightarrow\mathcal{M}_{n}(\mathbb{R})$ two matrix fields
we denote:%
\begin{align*}
UV  &  =U^{i}V^{i},\text{ }A:B=A_{ij}B_{ij},\\
\left\langle U,V\right\rangle _{L^{2}}  &  =\int U^{i}V^{i},\text{
}\left\langle A,B\right\rangle _{L^{2}}=\int A_{ij}B_{ij}\\
\left\Vert U\right\Vert _{L^{2}}^{2}  &  =\left\langle U,U\right\rangle
_{L^{2}},\text{ }\left\Vert A\right\Vert _{L^{2}}^{2}=\left\langle
A,A\right\rangle _{L^{2}}\\
\left\Vert \nabla^{2}U\right\Vert _{L^{2}}^{2}  &  =\int\nabla U:\nabla
U=\int\left(  \partial_{ij}U^{k}\right)  ^{2}%
\end{align*}
Also, the tensorial product of two vector fields $U,V$ is defined as the
matrix field $U\otimes V:\mathbb{R}^{n}\rightarrow\mathcal{M}_{n}(\mathbb{R})$
given by:%
\[
\left(  U\otimes V\right)  _{ij}=U^{i}V^{j}.
\]

\section{Some intermediate results}

The method of proof of the main results naturally leads us to study the
following system:%
\begin{equation}
\left\{
\begin{array}
[c]{l}%
\left(  I-\varepsilon b\Delta\right)  \partial_{t}\eta+\operatorname{div}%
V+\varepsilon\operatorname{div}\left(  \eta W_{1}+hV+\beta\eta V\right)
=\varepsilon f,\\
\left(  I-\varepsilon d\Delta\right)  \partial_{t}V+\nabla\eta+\varepsilon
\left(  W_{2}+\beta V\right)  \cdot\nabla V+\varepsilon V\cdot\nabla
W_{3}=\varepsilon g\\
\eta_{|t=0}=\eta_{0},\text{ }V_{|t=0}=V_{0}%
\end{array}
\right.  \tag{$\mathcal{S}_\varepsilon\left(     \mathcal{D}\right)
$}\label{BBM}%
\end{equation}
where $\varepsilon,\beta\in\left[  0,1\right]  $ and $\mathcal{D=}\left(
\eta_{0},V_{0},f,g,h,W_{1},W_{2},W_{3}\right)  $. The above system captures a
very general form of weakly dispersive quasilinear systems. Let us consider
$s\in\mathbb{R}$ such that
\begin{equation}
s>\frac{n}{2}+1\text{ or }s=\frac{n}{2}+1\text{ and }r=1. \label{relaties}%
\end{equation}
Let us fix the following notations:%

\begin{equation}
\left\{
\begin{array}
[c]{l}%
U_{j}^{2}(t)=\left\Vert \left(  \eta_{j}(t),V_{j}\left(  t\right)  \right)
\right\Vert _{L^{2}}^{2}+\varepsilon\left\Vert \left(  \sqrt{b}\nabla\eta
_{j}\left(  t\right)  ,\sqrt{d}\nabla V_{j}\left(  t\right)  \right)
\right\Vert _{L^{2}}^{2},\\
U_{s}\left(  t\right)  =\left\Vert \left(  2^{js}U_{j}\left(  t\right)
\right)  _{j\in\mathbb{Z}}\right\Vert _{\ell^{r}(\mathbb{Z})},U\left(
t\right)  =\left\Vert \left(  \eta,\nabla\eta,V,\nabla V\right)  \right\Vert
_{L^{\infty}\cap L^{p_{2}}},\text{ }F_{s}\left(  t\right)  =\left\Vert
(f(t),g(t))\right\Vert _{B_{2,r}^{s}},\\
\mathcal{W}_{s}\left(  t\right)  =\left\Vert h\left(  t\right)  \right\Vert
_{L^{\infty}}+\left\Vert \nabla h\left(  t\right)  \right\Vert _{B_{p_{1}%
,r}^{s}}+\left\Vert \partial_{t}h(t)\right\Vert _{L^{\infty}}+\sum_{i=1}%
^{3}\left(  \left\Vert \left(  W_{i}\left(  t\right)  \right)  \right\Vert
_{L^{\infty}}+\left\Vert \nabla W_{i}\left(  t\right)  \right\Vert
_{B_{p_{1},r}^{s}}\right)  .
\end{array}
\right.  \label{notatie}%
\end{equation}
In order to ease the reading we will rather skip denoting the time dependency
in the computations that follow. We are now in the position of stating the following:

\begin{theorem}
\label{Teorema1}Let us consider $b,d\geq0$, two real numbers with $b+d>0$,
$\left(  p_{1},r\right)  \in\left[  2,\infty\right]  $ $\times\lbrack
1,\infty)$, $p_{2}$ such that%
\[
\frac{1}{p_{1}}+\frac{1}{p_{2}}=\frac{1}{2}%
\]
and $s,s_{b},s_{d}>0$ as defined in $\left(  \text{\ref{relaties}}\right)  $
and $\left(  \text{\ref{relaties2}}\right)  $. Let us also consider $\left(
f,g\right)  \in\mathcal{C}\left(  [0,T],B_{2,r}^{s}\times\left(  B_{2,r}%
^{s}\right)  ^{n}\right)  $, $h\in\mathcal{C}\left(  [0,T],L^{\infty}\right)
$ with $\nabla h\in\mathcal{C}\left(  [0,T],B_{p_{1},r}^{s}\right)  $,
$\partial_{t}h\in L^{\infty}\left(  [0,T],L^{\infty}\right)  $ and for
$i=\overline{1,3}$ consider the vector fields $W_{i}\in\mathcal{C}\left(
[0,T],\left(  L^{\infty}\right)  ^{n}\right)  $ with $\nabla W_{i}%
\in\mathcal{C}\left(  [0,T],\left(  B_{p_{1},r}^{s}\right)  ^{n}\right)  $.
Then, for all $\left(  \eta_{0},V_{0}\right)  \in B_{2,r}^{s_{b}}\times\left(
B_{2,r}^{s_{d}}\right)  ^{n}$, writing \newline $\mathcal{D=}\left(  \eta_{0}%
,V_{0},f,g,h,W_{1},W_{2},W_{3}\right)  $, there exists a $\bar{T}\in(0,T]$
such that $\mathcal{S}_{\varepsilon}\left(  \mathcal{D}\right)  $ admits an
unique solution $\left(  \eta,V\right)  \in\mathcal{C}\left(  [0,\bar
{T}],B_{2,r}^{s_{b}}\times\left(  B_{2,r}^{s_{d}}\right)  ^{n}\right)  $ with
$\left(  \partial_{t}\eta,\partial_{t}V\right)  \in\mathcal{C}\left(
[0,\bar{T}],B_{2,r}^{s-1+2\operatorname*{sgn}b}\times\left(  B_{2,r}%
^{s-1+2\operatorname*{sgn}d}\right)  ^{n}\right)  $. Moreover if $T^{\star}%
\in\mathbb{R}^{+}$ is such that%
\begin{equation}
\int_{0}^{T^{\star}}U\left(  \tau\right)  d\tau<\infty, \label{Thm1.3}%
\end{equation}
then the solution $\left(  \eta,V\right)  $ can be continued after $T^{\star}$.
\end{theorem}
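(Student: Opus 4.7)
The plan is to follow the classical scheme for quasilinear evolution equations in Besov spaces: approximate, estimate, pass to the limit. First, I would introduce a Friedrichs-type regularization, say $J_n = \chi(2^{-n}D)$, and consider the system obtained by replacing every nonlinear product and every source by its frequency-truncated counterpart. Because $(I-\varepsilon b\Delta)$ and $(I-\varepsilon d\Delta)$ are boundedly invertible on $J_n L^2$, the regularized system reduces to an ODE in a Banach space for which the Cauchy--Lipschitz theorem yields a maximal smooth solution $(\eta^n,V^n)$; the issue is then to produce a lifespan bounded below and $B_{2,r}^{s_b}\times(B_{2,r}^{s_d})^n$-bounds, both independent of $n$.

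The heart of the argument is a Littlewood--Paley energy estimate for $U_j$. I would apply $\Delta_j$ to both equations of $\mathcal{S}_\varepsilon(\mathcal{D})$, pair the first with $\eta_j$ and the second with $V_j$ in $L^2$, and add. After integration by parts, the linear cross-terms $\langle\operatorname{div}V_j,\eta_j\rangle + \langle\nabla\eta_j,V_j\rangle$ cancel, while $(I-\varepsilon b\Delta)\partial_t\eta_j$ and $(I-\varepsilon d\Delta)\partial_t V_j$ reconstruct precisely $\tfrac12\frac{d}{dt}U_j^2$. What remains on the right is the source contribution $\varepsilon\langle(f_j,g_j),(\eta_j,V_j)\rangle$, benign zero-order products (e.g.\ those arising from $\partial_t h\,V_j$), and commutators of the form $\varepsilon[\Delta_j,W_i\cdot\nabla](\eta,V)$, $\varepsilon[\Delta_j,h]\operatorname{div}V$, and $\varepsilon\beta[\Delta_j,V]\cdot\nabla V$. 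Using Bony's paraproduct decomposition and the standard commutator estimates, valid under the hypothesis \eqref{relaties}, I expect each contribution to be dominated by a $\ell^r$-summable sequence of the form $c_j(\tau)\,2^{-js}\bigl(\mathcal{W}_s(\tau)+U(\tau)\bigr)^{1/2}U_s(\tau)$. Multiplying by $2^{2js}$, taking the $\ell^r$-norm and applying Gronwall should yield
\[
U_s(t) \leq C\Bigl(U_s(0) + \int_0^t F_s(\tau)\,d\tau\Bigr)\exp\!\Bigl(C\int_0^t \bigl(\mathcal{W}_s(\tau)+U(\tau)\bigr)\,d\tau\Bigr),
\]
which gives a uniform bound on an interval $[0,\bar{T}]$ whose length depends only on the data.

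With this \emph{a priori} estimate in hand, the remaining steps are standard. The uniform bound on $U_s$ gives weak-$\star$ compactness in $L^\infty([0,\bar T],B_{2,r}^{s_b}\times(B_{2,r}^{s_d})^n)$; the equation itself controls $\partial_t\eta^n,\partial_t V^n$ in a lower regularity space, and Aubin--Lions provides strong convergence in that weaker norm, which is enough to pass to the limit in every nonlinearity. Continuity in time is recovered by a Fatou argument, and the regularity of the time derivative in $B_{2,r}^{s-1+2\operatorname{sgn}b}\times(B_{2,r}^{s-1+2\operatorname{sgn}d})^n$ is read off the equation. Uniqueness comes from running the same computation on the difference of two solutions at one derivative less, the $L^\infty$-bound on the more regular solution compensating the quasilinear loss. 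Finally, the blow-up criterion is an immediate consequence of the bound above: if $\int_0^{T^\star}U(\tau)\,d\tau<\infty$, then $U_s$ stays bounded up to $T^\star$, and reapplying local existence from a time arbitrarily close to $T^\star$ extends the solution beyond it.

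The main obstacle is closing the energy estimate without losing derivatives in the quasilinear terms $\beta\eta\operatorname{div}V$ and $\beta V\cdot\nabla V$, each of which formally costs one derivative past the level given by the data. Two ingredients must combine precisely to compensate. The first is the symmetric cancellation $\beta\langle\operatorname{div}(\eta_j V),\eta_j\rangle + \beta\langle V\cdot\nabla V_j,V_j\rangle = O(\|\nabla V\|_{L^\infty}U_j^2)$ produced by integration by parts after a paraproduct rearrangement; the second is the fact that the half-derivatives $\sqrt{\varepsilon b}\,\nabla\eta_j$ and $\sqrt{\varepsilon d}\,\nabla V_j$ embedded in $U_j$ absorb the leftover gradient from commutators involving $\beta V$. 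When $\operatorname{sgn}(b)$ or $\operatorname{sgn}(c)$ vanishes one of these gradient controls disappears, which is why the indices $s_b,s_d$ are shifted only when the corresponding BBM-coefficient is active; carrying this asymmetry faithfully through every commutator and every paraproduct is where the bookkeeping must be done most carefully.
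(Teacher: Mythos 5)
Your proposal follows essentially the same route as the paper: Friedrichs regularization reducing to an ODE solved by Cauchy--Lipschitz, a frequency-localized energy estimate closed by commutator/paraproduct bounds and Gronwall, a (localized) compactness argument to pass to the limit, an $L^2$ energy computation on the difference for uniqueness, and the continuation criterion read off directly from the Gronwall inequality. The one point to watch is that the genuinely non-symmetric contribution $\varepsilon(h+\beta\eta)\,\eta_j\operatorname{div}V_j$ is not removed by any cancellation against the second equation; as in the paper, it must be absorbed by the $\sqrt{\varepsilon b}\,\nabla\eta_j$, $\sqrt{\varepsilon d}\,\nabla V_j$ components of $U_j$ at the price of a factor $\sqrt{\varepsilon}/\max(\sqrt{b},\sqrt{d})$ (hence the hypothesis $b+d>0$), which is the second mechanism you correctly identify.
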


Of course, in what the existence and uniqueness of solutions of $\mathcal{S}%
_{\varepsilon}\left(  \mathcal{D}\right)  $ is concerned, the results
presented in Theorem \ref{Teorema1} are not optimal. However, the aspect that
we wish to emphasize in this paper is the possibility of solving the above
system on what we named long time scales and with regards to this matter the
extra regularity is necessary in order to develop the forthcoming theory. The
next result is the main ingredient in proving the l.t.e. results announced in
Theorem \ref{Teorema2} and Theorem \ref{Teorema3}.

\begin{theorem}
\label{Teorema1.1}Let us fix $\left(  \eta_{0},V_{0}\right)  \in X_{b,d,r}%
^{s}$. For $\varepsilon\in(0,1]$ we consider $\left(  f^{\varepsilon
},g^{\varepsilon}\right)  \in\mathcal{C}\left(  [0,T^{\varepsilon}%
],B_{2,r}^{s}\times\left(  B_{2,r}^{s}\right)  ^{n}\right)  $, $h^{\varepsilon
}\in\mathcal{C}\left(  [0,T^{\varepsilon}],L^{\infty}\right)  $ with $\nabla
h^{\varepsilon}\in\mathcal{C}\left(  [0,T^{\varepsilon}],B_{p_{1},r}%
^{s}\right)  $, $\partial_{t}h^{\varepsilon}\in L^{\infty}\left(
[0,T^{\varepsilon}],L^{\infty}\right)  $ and for $i=\overline{1,3}$ we
consider the vector fields $W_{i}^{\varepsilon}\in\mathcal{C}\left(
[0,T^{\varepsilon}],\left(  L^{\infty}\right)  ^{n}\right)  $ with $\nabla
W_{i}^{\varepsilon}\in\mathcal{C}\left(  [0,T^{\varepsilon}],\left(
B_{p_{1},r}^{s}\right)  ^{n}\right)  $. Assume that%
\[
\sup\limits_{\varepsilon\in\left[  0,1\right]  }\sup\limits_{\tau\in
\lbrack0,T^{\varepsilon}]}\mathcal{W}_{s}^{\varepsilon}\left(  \tau\right)
<\infty,\text{ }\sup\limits_{\varepsilon\in\left[  0,1\right]  }%
\sup\limits_{\tau\in\lbrack0,T^{\varepsilon}]}F_{s}^{\varepsilon}\left(
\tau\right)  <\infty,
\]
where%
\[
\mathcal{W}_{s}^{\varepsilon}\left(  t\right)  =\left\Vert h^{\varepsilon
}\left(  t\right)  \right\Vert _{L^{\infty}}+\left\Vert \nabla h^{\varepsilon
}\left(  t\right)  \right\Vert _{B_{p_{1},r}^{s}}+\left\Vert \partial
_{t}h^{\varepsilon}(t)\right\Vert _{L^{\infty}}+\sum_{i=1}^{3}\left(
\left\Vert \left(  W_{i}^{\varepsilon}\left(  t\right)  \right)  \right\Vert
_{L^{\infty}}+\left\Vert \nabla W_{i}^{\varepsilon}\left(  t\right)
\right\Vert _{B_{p_{1},r}^{s}}\right)  ,
\]
and%
\[
F_{s}^{\varepsilon}\left(  t\right)  =\left\Vert (f^{\varepsilon
}(t),g^{\varepsilon}(t))\right\Vert _{B_{2,r}^{s}}.
\]
We denote by $\mathcal{D}^{\varepsilon}\mathcal{=}\left(  \eta_{0}%
,V_{0},f^{\varepsilon},g^{\varepsilon},h^{\varepsilon},W_{1}^{\varepsilon
},W_{2}^{\varepsilon},W_{3}^{\varepsilon}\right)  $.Then, there exist two real
numbers $\varepsilon_{0}$, $C$ both depending on $s,n,b,d,\left\Vert \left(
\eta_{0},V_{0}\right)  \right\Vert _{X_{b,d,r}^{s,1}}$, $\sup
\limits_{\varepsilon\in\left[  0,1\right]  }\sup\limits_{\tau\in
\lbrack0,T^{\varepsilon}]}\mathcal{W}_{s}^{\varepsilon}\left(  \tau\right)  $,
$\sup\limits_{\varepsilon\in\left[  0,1\right]  }\sup\limits_{\tau\in
\lbrack0,T^{\varepsilon}]}F_{s}^{\varepsilon}\left(  \tau\right)  $ and a
numerical constant $\tilde{C}=\tilde{C}\left(  n\right)  $ such that the
following holds true. For any $\varepsilon\leq\varepsilon_{0}$, the maximal
time of existence $T_{\max}^{\varepsilon}$ of the unique solution $\left(
\eta^{\varepsilon},V^{\varepsilon}\right)  $ of system $\mathcal{S}%
_{\varepsilon}\left(  \mathcal{D}^{\varepsilon}\right)  $ satisfies the
following lower bound:%
\begin{equation}
T_{\max}^{\varepsilon}\geq T_{\star}^{\varepsilon}:\overset{def.}{=}%
\min\left\{  T^{\varepsilon},\frac{C}{\varepsilon}\right\}  . \label{Timp}%
\end{equation}
Moreover, we have that:%
\begin{equation}
\sup_{t\in\left[  0,T_{\star}^{\varepsilon}\right]  }\left\Vert \left(
\eta^{\varepsilon}\left(  t\right)  ,V^{\varepsilon}\left(  t\right)  \right)
\right\Vert _{X_{b,d,r}^{s,\varepsilon}}+\sup_{t\in\left[  0,T_{\star
}^{\varepsilon}\right]  }\left\Vert \partial_{t}\eta^{\varepsilon}\left(
t\right)  \right\Vert _{L^{\infty}}\leq\tilde{C}\left(  n\right)  \left\Vert
\left(  \eta_{0},V_{0}\right)  \right\Vert _{X_{b,d,r}^{s,\varepsilon}}.
\label{uniform}%
\end{equation}

\end{theorem}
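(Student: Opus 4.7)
The plan is to combine the local existence statement from Theorem \ref{Teorema1} with a uniform a priori bound on the long time scale $1/\varepsilon$, and then conclude via the blow-up criterion (\ref{Thm1.3}). Theorem \ref{Teorema1} provides, for each $\varepsilon \in (0,1]$, a unique maximal solution $(\eta^\varepsilon, V^\varepsilon) \in \mathcal{C}([0, T_{\max}^\varepsilon), X_{b,d,r}^s)$. Under the hypothesis (\ref{relaties}) one has the embedding $B^s_{2,r} \hookrightarrow L^\infty \cap L^{p_2}$, so that the quantity $U(t)$ defined in (\ref{notatie}) is controlled by $\|(\eta^\varepsilon, V^\varepsilon)\|_{X^s_{b,d,r}}$. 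Consequently, any bound of the form (\ref{uniform}) on a subinterval $[0,t] \subset [0, T_{\max}^\varepsilon)$ keeps (\ref{Thm1.3}) finite and allows continuation, so the whole statement reduces to proving the a priori bound (\ref{uniform}) on $[0, T_\star^\varepsilon]$.

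The core analytic step is a dyadic energy estimate for the localized quantities $U_j$. Applying $\Delta_j$ to both equations of $\mathcal{S}_\varepsilon(\mathcal{D}^\varepsilon)$, testing the first against $\eta_j$ and the second against $V_j$, adding and integrating by parts, the leading-order linear coupling $\operatorname{div} V_j \leftrightarrow \nabla \eta_j$ cancels while the dispersive operators reconstruct $\tfrac{1}{2}\tfrac{d}{dt} U_j^2$ on the left. Crucially, every remaining term carries an explicit factor of $\varepsilon$. The quasilinear pieces $\varepsilon (W_2 + \beta V) \cdot \nabla V$ and $\varepsilon \operatorname{div}(\eta W_1 + h V + \beta \eta V)$ are handled by Bony's paraproduct decomposition and standard commutator estimates for $[\Delta_j, f\cdot \nabla]$ and $[\Delta_j, f]$, in which the condition $1/p_1 + 1/p_2 = 1/2$ is precisely what closes the $L^{p_1}\!\cdot L^{p_2}$ pairings between the coefficient fields and the solution. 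After multiplication by $2^{2js}$ and summation in $\ell^{r/2}(\mathbb{Z})$, one obtains an inequality
\begin{equation*}
U_s(t)^2 \leq U_s(0)^2 + \varepsilon \int_0^t \Phi\bigl(U_s(\tau), \mathcal{W}_s^\varepsilon(\tau), F_s^\varepsilon(\tau)\bigr) \, d\tau,
\end{equation*}
with $\Phi$ a fixed polynomial in its first argument and of at most linear growth in the other two.

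A standard bootstrap on the scale $1/\varepsilon$ then closes the argument. Setting $K := 2\tilde C(n) \|(\eta_0, V_0)\|_{X^{s,\varepsilon}_{b,d,r}}$, let $T^\varepsilon_{\#}$ be the supremum of $t \leq \min\{T_{\max}^\varepsilon, T^\varepsilon\}$ for which $U_s(\tau) \leq K$ throughout $[0,t]$; on such an interval $\Phi$ is bounded by a constant $M$ depending only on $K$ and the uniform bounds on $\mathcal{W}_s^\varepsilon$, $F_s^\varepsilon$. Choosing $\varepsilon_0$ and $C$ so that $CM \leq 3K^2/4$ for all $\varepsilon \leq \varepsilon_0$ forces $U_s(t) \leq K$ throughout $[0, \min\{T^\varepsilon, C/\varepsilon\}]$ by continuity, giving (\ref{uniform}) and, via the continuation criterion, (\ref{Timp}). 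The bound on $\|\partial_t \eta^\varepsilon\|_{L^\infty}$ is read directly from the first equation of $\mathcal{S}_\varepsilon(\mathcal{D}^\varepsilon)$ by inverting $(I - \varepsilon b \Delta)^{-1}$, which composed with $\partial_k$ maps $B^{s-\operatorname{sgn} b}_{2,r}$ boundedly into $L^\infty$ uniformly in $\varepsilon$ (trivially so when $b=0$). The main technical difficulty is the commutator accounting of the second paragraph: every remainder must be arranged so that only a single power of $U_s$ appears under the time integral while the factor $\varepsilon$ is preserved; the $\varepsilon$-weighted gradient norms $\varepsilon b\|\nabla\eta_j\|_{L^2}^2 + \varepsilon d\|\nabla V_j\|_{L^2}^2$ inside $U_j^2$ play the crucial role of absorbing the extra derivative produced by one side of each commutator, generalizing the estimates of \cite{Burtea1} to accommodate the external fields $h, W_1, W_2, W_3$.
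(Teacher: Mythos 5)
There is a genuine gap at the heart of your energy estimate. You test the second localized equation against $V_j$ alone and assert that ``every remaining term carries an explicit factor of $\varepsilon$.'' The nominal factor of $\varepsilon$ is there, but it does not survive the estimate: after the linear coupling cancels, the first equation leaves behind the principal quasilinear term $\varepsilon\int\left(h+\beta\eta\right)\eta_{j}\operatorname{div}V_{j}$ (coming from $\varepsilon\operatorname{div}\left(hV+\beta\eta V\right)$), which has a full derivative on $V_{j}$ and no symmetric counterpart in the second equation. Since the energy $U_{j}^{2}$ controls $\nabla V_{j}$ only through the weighted quantity $\varepsilon d\left\Vert \nabla V_{j}\right\Vert _{L^{2}}^{2}$ (and $\nabla\eta_j$ only through $\varepsilon b\left\Vert\nabla\eta_j\right\Vert_{L^2}^2$), absorbing that derivative costs a factor $\left(\varepsilon d\right)^{-1/2}$, so the best one gets from your symmetrization is $\frac{d}{dt}U_{j}^{2}\lesssim\sqrt{\varepsilon}\,U_{j}^{2}\left(\mathcal{W}_{s}+U\right)+\dots$ — exactly the bound \eqref{Gronwal0} in the paper, with $\sqrt{\varepsilon}$ and not $\varepsilon$. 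A Bernstein inequality $\left\Vert\operatorname{div}V_j\right\Vert_{L^2}\leq C2^{j}\left\Vert V_j\right\Vert_{L^2}$ does not help either, since it loses a derivative in the $\ell^r$-summation. Your bootstrap, fed with $\sqrt{\varepsilon}$, only yields a lifespan of order $\varepsilon^{-1/2}$, which is precisely the trivial (linear-wave) time scale the l.t.e.\ problem is meant to beat. This is the ``lack of symmetry'' obstruction discussed in the introduction, and it is not a commutator issue: the commutator remainders $R_{1j},R_{2j}$ are indeed harmless; the problem sits in the principal part.

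The missing idea is the modified, solution-dependent energy: the paper tests the second equation against $\left(1+\alpha\varepsilon\left(h+\beta\eta\right)\right)V_{j}$ with $\alpha=1$, i.e.\ works with $N_{j}^{2}=\int\eta_{j}^{2}+\varepsilon b\left\vert\nabla\eta_{j}\right\vert^{2}+\left(1+\varepsilon\left(\eta+h\right)\right)\left(V_{j}^{2}+\varepsilon d\,\nabla V_{j}:\nabla V_{j}\right)$. With this weight the two dangerous cross terms combine, after one integration by parts, into $\varepsilon\int\left(\nabla h+\beta\nabla\eta\right)\eta_{j}V_{j}$, which carries no derivatives on the localized unknowns and is genuinely $O\left(\varepsilon\right)U_{j}^{2}$ (see \eqref{T1.1} with $\alpha=1$). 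The price is the appearance of the terms $T_{6}$ and $T_{7}$ where $\partial_{t}$ falls on the weight; these require the auxiliary bounds $\left\Vert\partial_{t}\eta\right\Vert_{L^{\infty}}\lesssim U_{s}+\varepsilon F_{s}+\varepsilon U_{s}\left(\mathcal{W}_{s}+U_{s}\right)$ of \eqref{derivatatemporala} and an estimate on $\varepsilon d\left\Vert\partial_{t}\nabla V_{j}\right\Vert_{L^{2}}$, both extracted from the equations themselves; only then does one reach \eqref{final1}, where every term carries a full power of $\varepsilon$, and the bootstrap you outline (which otherwise matches the paper's contradiction argument, including the equivalence $\tfrac12 U_j\leq N_j\leq\tfrac{\sqrt7}{2}U_j$ for $\varepsilon$ small and the continuation via \eqref{Thm1.3}) closes on the time scale $C/\varepsilon$. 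Your final observation about the $\partial_{t}\eta^{\varepsilon}$ bound is correct and is exactly how the paper concludes.
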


Of course, when the data $\mathcal{D}^{\varepsilon}$ does not depend on
$\varepsilon$ we obtain the following:

\begin{corollary}
\label{Corolar}Let us fix $\left(  \eta_{0},V_{0}\right)  \in X_{b,d,r}^{s}$
and $s$ and $p_{1}$ as above. Also, consider $\left(  f,g\right)
\in\mathcal{C}\left(  [0,T],B_{2,r}^{s}\times\left(  B_{2,r}^{s}\right)
^{n}\right)  $, $h\in\mathcal{C}\left(  [0,T],L^{\infty}\right)  $ with
$\nabla h\in\mathcal{C}\left(  [0,T],B_{p_{1},r}^{s}\right)  $, $\partial
_{t}h\in L^{\infty}\left(  [0,T],L^{\infty}\right)  $ and for $i=\overline
{1,3}$ consider the vector fields $W_{i}\in\mathcal{C}\left(  [0,T],\left(
L^{\infty}\right)  ^{n}\right)  $ with $\nabla W_{i}\in\mathcal{C}\left(
[0,T],\left(  B_{p_{1},r}^{s}\right)  ^{n}\right)  $. We denote by
$\mathcal{D=}\left(  \eta_{0},V_{0},f,g,h,W_{1},W_{2},W_{3}\right)  $.Then,
there exist two real numbers $\varepsilon_{0}$, $C$ both depending on $s$%
,$n$,$b$,$d$,$\left\Vert \left(  \eta_{0},V_{0}\right)  \right\Vert
_{X_{b,d,r}^{s,1}}$, $\sup\limits_{\tau\in\lbrack0,T]}\mathcal{W}_{s}\left(
\tau\right)  $, $\sup\limits_{\tau\in\lbrack0,T]}F_{s}\left(  \tau\right)  $
and a numerical constant $\tilde{C}=\tilde{C}\left(  n\right)  $ such that the
following holds true. For any $\varepsilon\leq\varepsilon_{0}$, the maximal
time of existence $T_{\max}^{\varepsilon}$ of the unique solution $\left(
\eta^{\varepsilon},V^{\varepsilon}\right)  $ of the system $\mathcal{S}%
_{\varepsilon}\left(  \mathcal{D}\right)  $ satisfies the following lower
bound:%
\begin{equation}
T_{\max}^{\varepsilon}\geq T_{\star}^{\varepsilon}:\overset{def.}{=}%
\min\left\{  T^{\varepsilon},\frac{C}{\varepsilon}\right\}  . \label{Corolar1}%
\end{equation}
Moreover, we have that:%
\begin{equation}
\sup_{t\in\left[  0,T_{\star}^{\varepsilon}\right]  }\left\Vert \left(
\eta^{\varepsilon}\left(  t\right)  ,V^{\varepsilon}\left(  t\right)  \right)
\right\Vert _{X_{b,d,r}^{s,\varepsilon}}+\sup_{t\in\left[  0,T_{\star
}^{\varepsilon}\right]  }\left\Vert \partial_{t}\eta^{\varepsilon}\left(
t\right)  \right\Vert _{L^{\infty}}\leq\tilde{C}\left(  n\right)  \left\Vert
\left(  \eta_{0},V_{0}\right)  \right\Vert _{X_{b,d,r}^{s,\varepsilon}}.
\label{Corolar2}%
\end{equation}

\end{corollary}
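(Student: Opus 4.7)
The strategy is to deduce this corollary as a direct specialization of Theorem \ref{Teorema1.1}. The plan is to set $\mathcal{D}^\varepsilon := \mathcal{D}$ and $T^\varepsilon := T$ for every $\varepsilon \in (0,1]$, thus producing a constant family of data, and then invoke Theorem \ref{Teorema1.1} on this family.

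To carry out this reduction, the first step is to verify that the data $(\eta_{0},V_{0},f,g,h,W_{1},W_{2},W_{3})$ meets the hypotheses of Theorem \ref{Teorema1.1} uniformly in $\varepsilon$. Since $\mathcal{D}^\varepsilon$ is independent of $\varepsilon$, the quantities $\mathcal{W}_{s}^{\varepsilon}(t)$ and $F_{s}^{\varepsilon}(t)$ introduced in the statement of Theorem \ref{Teorema1.1} coincide with $\mathcal{W}_{s}(t)$ and $F_{s}(t)$ from \eqref{notatie}. Consequently
\[
\sup_{\varepsilon \in [0,1]}\sup_{\tau \in [0,T^{\varepsilon}]} \mathcal{W}_{s}^{\varepsilon}(\tau) = \sup_{\tau \in [0,T]} \mathcal{W}_{s}(\tau),
\]
and similarly for $F_{s}^{\varepsilon}$. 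The hypotheses $h\in\mathcal{C}([0,T],L^{\infty})$, $\nabla h\in\mathcal{C}([0,T],B_{p_{1},r}^{s})$, $\partial_{t}h\in L^{\infty}([0,T],L^{\infty})$, together with the analogous conditions on the $W_{i}$ and on $(f,g)$, and the compactness of $[0,T]$, guarantee that both suprema on the right-hand side are finite.

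The second step is to apply Theorem \ref{Teorema1.1} with this constant family. This immediately produces thresholds $\varepsilon_{0}$ and $C$ depending on $s,n,b,d$, on $\left\Vert \left(\eta_{0},V_{0}\right)\right\Vert _{X_{b,d,r}^{s,1}}$, and on the now $\varepsilon$-free suprema above; the dependence list coincides exactly with the one advertised in the corollary. The conclusion of Theorem \ref{Teorema1.1}, namely the lower bound \eqref{Timp} on $T_{\max}^{\varepsilon}$ and the uniform estimate \eqref{uniform} on $(\eta^{\varepsilon},V^{\varepsilon})$ and $\partial_{t}\eta^{\varepsilon}$, then translates verbatim into \eqref{Corolar1} and \eqref{Corolar2}.

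Since the corollary is essentially a tautological specialization of Theorem \ref{Teorema1.1}, there is no genuine obstacle to overcome; the only minor point requiring attention is the passage from the ``uniform in $\varepsilon$'' data hypotheses of Theorem \ref{Teorema1.1} to the time-only hypotheses stated here, which, as explained above, is immediate because continuity in time on the compact interval $[0,T]$ yields boundedness of all the relevant norms.
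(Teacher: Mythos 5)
Your proposal is correct and matches the paper exactly: the paper introduces the corollary with the phrase ``when the data $\mathcal{D}^{\varepsilon}$ does not depend on $\varepsilon$ we obtain the following,'' i.e.\ it is precisely the specialization of Theorem \ref{Teorema1.1} to the constant family $\mathcal{D}^{\varepsilon}=\mathcal{D}$, $T^{\varepsilon}=T$ that you describe. Your additional remark that continuity on the compact interval $[0,T]$ yields the required finiteness of the suprema is the only verification needed, and it is sound.
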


\begin{remark}
The choice of $s$ according to relation $\left(  \text{\ref{relaties}}\right)
$ ensures that we have the following embedding: $B_{2,r}^{s}\hookrightarrow
L^{p_{2}}$ and $B_{2,r}^{s}\hookrightarrow L^{\infty}$. In particular, we also
have%
\[
U\left(  t\right)  \leq_{n}U_{s}\left(  t\right)  ,
\]
a fact that will be systematically used in all that follows.
\end{remark}

\begin{remark}
The explosion criterion $\left(  \text{\ref{Thm1.3}}\right)  $ implies that
the life span of the solution does not depend on its possible extra regularity
above the critical level $B_{2,1}^{\frac{n}{2}+1}$.
\end{remark}

The plan of the proof of Theorem \ref{Teorema1} is the following. First, we
derive a priori estimates using a spectral localization of the system $\left(
\text{\ref{BBM}}\right)  $. Then, we use the so called Friedrichs method in
order to construct a sequence of functions that solves a family of ODE's which
approximate system $\left(  \text{\ref{BBM}}\right)  $. Finally, using a
compactness method we show that we can construct a solution of the system
$\left(  \text{\ref{BBM}}\right)  $. Theorem \ref{Teorema1.1} is obtained
using a bootstrap argument.

\subsection{Proof of Theorem \ref{Teorema1}}

\subsubsection{A priori estimates}

Firs of all we will derive a priori estimates. Thus, let us consider $\left(
\eta,V\right)  \in\mathcal{C}\left(  [0,\bar{T}],B_{2,r}^{s_{b}}\times\left(
B_{2,r}^{s_{d}}\right)  ^{n}\right)  $ a solution of $\left(  \text{\ref{BBM}%
}\right)  $. As announced, we proceed by localizing the system $\left(
\text{\ref{BBM}}\right)  $ in the frequency space such that we obtain:%
\begin{equation}
\left\{
\begin{array}
[c]{l}%
\left(  I-\varepsilon b\Delta\right)  \partial_{t}\eta_{j}+\operatorname{div}%
V_{j}+\varepsilon\nabla\eta_{j}\left(  W_{1}+\beta V\right)  +\varepsilon
\left(  h+\beta\eta\right)  \operatorname{div}V_{j}=\varepsilon f_{j}%
+\varepsilon R_{1j},\\
\left(  I-\varepsilon d\Delta\right)  \partial_{t}V_{j}+\nabla\eta
_{j}+\varepsilon\left(  W_{2}+\beta V\right)  \cdot\nabla V_{j}=\varepsilon
g_{j}+\varepsilon R_{2j},
\end{array}
\right.  \label{ecj}%
\end{equation}
where%
\begin{equation}
\left\{
\begin{array}
[c]{l}%
R_{1j}=\beta\left[  V,\Delta_{j}\right]  \nabla\eta+\left[  W_{1},\Delta
_{j}\right]  \nabla\eta+\beta\left[  \eta,\Delta_{j}\right]
\operatorname{div}V+\left[  h,\Delta_{j}\right]  \operatorname{div}%
V-\Delta_{j}\left(  \nabla hV\right)  -\Delta_{j}\left(  \eta
\operatorname{div}W_{1}\right)  ,\\
R_{2j}=\left[  \left(  W_{2}+\beta V\right)  \cdot\nabla,\Delta_{j}\right]
V-\Delta_{j}\left(  V\cdot\nabla W_{3}\right)  ,
\end{array}
\right.  \label{Resturi}%
\end{equation}

We multiply the first equation of $\left(  \text{\ref{ecj}}\right)  $ with
$\eta_{j}$ and the second one with $1+\alpha\varepsilon\left(  h+\beta
\eta\right)  V_{j}$, we add the results and by integrating over $\mathbb{R}%
^{n}$ we obtain that:%
\[
\frac{1}{2}\frac{d}{dt}\left[  \int\eta_{j}^{2}+\varepsilon b\left\vert
\nabla\eta_{j}\right\vert ^{2}+\left(  1+\alpha\varepsilon\left(  h+\beta
\eta\right)  \right)  \left(  V_{j}^{2}+\varepsilon d\nabla V_{j}:\nabla
V_{j}\right)  \right]  =\sum_{i=1}^{7}T_{i}%
\]
where:%
\begin{align*}
T_{1}  &  =-\int\left(  1+\varepsilon\left(  h+\beta\eta\right)  \right)
\eta_{j}\operatorname{div}V_{j}-\int\left(  1+\alpha\varepsilon\left(
h+\beta\eta\right)  \right)  \nabla\eta_{j}V_{j},\\
T_{2}  &  =-\varepsilon\int\nabla\eta_{j}\left(  W_{1}+\beta V\right)
\eta_{j},\\
\text{ \ }T_{3}  &  =-\varepsilon\left\langle \left(  W_{2}+\beta V\right)
\cdot\nabla V_{j},\left(  1+\alpha\varepsilon\left(  h+\beta\eta\right)
\right)  V_{j}\right\rangle _{L^{2}},\\
T_{4}  &  =\varepsilon\int f_{j}\eta_{j}+\varepsilon\int\left(  1+\alpha
\varepsilon\left(  h+\beta\eta\right)  \right)  g_{j}V_{j},\\
T_{5}  &  =\varepsilon\int R_{1j}\eta_{j}+\varepsilon\int\left(
1+\alpha\varepsilon\left(  h+\beta\eta\right)  \right)  R_{2j}V_{j},\\
T_{6}  &  =-\frac{1}{2}\alpha\varepsilon\left\langle V_{j},\left(
\partial_{t}h+\beta\partial_{t}\eta\right)  V_{j}\right\rangle _{L^{2}}%
-\frac{1}{2}\alpha d\varepsilon^{2}\left\langle \nabla V_{j},\left(
\partial_{t}h+\beta\partial_{t}\eta\right)  \nabla V_{j}\right\rangle _{L^{2}%
},\\
T_{7}  &  =-\alpha d\varepsilon^{2}\left\langle \partial_{t}\nabla
V_{j},\nabla\left(  h+\beta\eta\right)  \otimes V_{j}\right\rangle _{L^{2}}.
\end{align*}
\ Here, we use $\alpha$ as a parameter in order to obtain the desired
estimates from a single "strike". Indeed only the values $\alpha\in\left\{
0,1\right\}  $ will be of interest to us and, as we will see, $\alpha=0$ will
give us the estimate necessary to develop the existence theory while
$\alpha=1$ will lead to an estimate that is the key point in finding the lower
bound on the time of existence.

Let us estimate the $T_{i}$'s. Regarding the first term, we write that:%
\begin{align}
T_{1}  &  =-\int\left(  1+\varepsilon\left(  h+\beta\eta\right)  \right)
\eta_{j}\operatorname{div}V_{j}-\int\left(  1+\alpha\varepsilon\left(
h+\beta\eta\right)  \right)  \nabla\eta_{j}V_{j}\nonumber\\
&  =\alpha\varepsilon\int\left(  \nabla h+\beta\nabla\eta\right)  \eta
_{j}V_{j}-\varepsilon\left(  1-\alpha\right)  \int\left(  \beta\eta+h\right)
\eta_{j}\operatorname{div}V_{j}\nonumber\\
&  \leq\alpha\varepsilon C\left(  \left\Vert \nabla h\right\Vert _{L^{\infty}%
}+\beta\left\Vert \nabla\eta\right\Vert _{L^{\infty}}\right)  \left\Vert
\eta_{j}\right\Vert _{L^{2}}\left\Vert V_{j}\right\Vert _{L^{2}}\nonumber\\
&  +C\frac{\left(  1-\alpha\right)  \sqrt{\varepsilon}}{\max(\sqrt{b},\sqrt
{d})}U_{j}^{2}\left(  \mathcal{W}_{s}\mathcal{+}\beta U\right) \nonumber\\
&  \leq\alpha\varepsilon CU_{j}^{2}\left(  \mathcal{W}_{s}\mathcal{+}\beta
U\right)  +C\frac{\left(  1-\alpha\right)  \sqrt{\varepsilon}}{\max(\sqrt
{b},\sqrt{d})}U_{j}^{2}\left(  \mathcal{W}_{s}\mathcal{+}\beta U\right)  .
\label{T1.1}%
\end{align}

Let us bound the second term:%
\begin{align}
T_{2}  &  =-\varepsilon\int\left(  W_{1}+\beta V\right)  \nabla\eta_{j}%
\eta_{j}\leq\frac{\varepsilon}{2}\int\left(  \left\Vert \operatorname{div}%
W_{1}\right\Vert _{L^{\infty}}+\beta\left\Vert \operatorname{div}V\right\Vert
_{L^{\infty}}\right)  \eta_{j}^{2}\nonumber\\
&  \leq\frac{\varepsilon}{2}U_{j}^{2}\left(  \mathcal{W}_{s}\mathcal{+}\beta
U\right)  . \label{T2}%
\end{align}

Using the Einstein summation convention over repeated indices, the term
$T_{3}$ is treated as follows:%
\begin{align}
T_{3}  &  =-\varepsilon\left\langle \left(  W_{2}+\beta V\right)  \cdot\nabla
V_{j},\left(  1+\alpha\varepsilon\left(  \beta\eta+h\right)  \right)
V_{j}\right\rangle _{L^{2}}\nonumber\\
&  =-\varepsilon\int\left(  1+\alpha\varepsilon(\beta\eta+h)\right)  \left(
\beta V^{m}+W_{2}^{m}\right)  \partial_{m}V_{j}^{k}V_{j}^{k}\nonumber\\
&  =\frac{\varepsilon}{2}\int\partial_{m}\left(  \left(  1+\alpha
\varepsilon(\beta\eta+h)\right)  \left(  \beta V^{m}+W_{2}^{m}\right)
\right)  V_{j}^{k}V_{j}^{k}\nonumber\\
&  =\frac{\varepsilon}{2}\int\operatorname{div}\left(  \left(  1+\alpha
\varepsilon(\beta\eta+h)\right)  \left(  \beta V+W_{2}\right)  \right)
\left\vert V_{j}\right\vert ^{2}\nonumber\\
&  \leq\frac{\varepsilon}{2}U_{j}^{2}\left(  \left\Vert W_{2}\right\Vert
_{L^{\infty}}+\beta\left\Vert V\right\Vert _{L^{\infty}}+\alpha\varepsilon
\left\Vert \left(  \beta\eta,\beta\nabla\eta,h,\nabla h\right)  \right\Vert
_{L^{\infty}}\left\Vert \left(  \beta V,\beta\nabla V,W_{2},\nabla
W_{2}\right)  \right\Vert _{L^{\infty}}\right) \nonumber\\
&  \leq\frac{\varepsilon}{2}U_{j}^{2}\left(  \mathcal{W}_{s}+\beta
U+\alpha\varepsilon\left(  \mathcal{W}_{s}\mathcal{+}\beta U\right)
^{2}\right)  . \label{T3}%
\end{align}

Next, we bound the term $T_{4}:$%
\begin{align}
T_{4}  &  =\varepsilon\int f_{j}\eta_{j}+\varepsilon\int\left(  1+\alpha
\varepsilon\left(  h+\beta\eta\right)  \right)  g_{j}V_{j}\nonumber\\
&  \leq\varepsilon\left\Vert f_{j}\right\Vert _{L^{2}}\left\Vert \eta
_{j}\right\Vert _{L^{2}}+\varepsilon\left(  1+\alpha\varepsilon\left\Vert
h\right\Vert _{L^{\infty}}+\alpha\beta\varepsilon\left\Vert \eta\right\Vert
_{L^{\infty}}\right)  \left\Vert g_{j}\right\Vert _{L^{2}}\left\Vert
V_{j}\right\Vert _{L^{2}}\nonumber\\
&  \leq\varepsilon C2^{-js}c_{j}^{1}\left(  t\right)  U_{j}F_{s}\left(
1+\alpha\varepsilon\left(  \mathcal{W}_{s}+\beta U\right)  \right)  .
\label{T4}%
\end{align}

Treating the fifth term is done in the following lines. First we write that:%
\begin{align}
T_{5}  &  =\varepsilon\int R_{1j}\eta_{j}+\varepsilon\int\left(
1+\alpha\varepsilon\left(  h+\beta\eta\right)  \right)  R_{2j}V_{j}\nonumber\\
&  \leq\varepsilon CU_{j}\left(  1+\alpha\varepsilon\left(  \mathcal{W}%
_{s}+\beta U\right)  \right)  \left(  \left\Vert R_{1j}\right\Vert _{L^{2}%
}+\left\Vert R_{2j}\right\Vert _{L^{2}}\right)  . \label{Rest1}%
\end{align}
Next, let us estimate the remainder terms $R_{1j}$ and $R_{2j}$. In order to
do so, we apply Proposition \ref{comutneomogen} and Proposition \ref{produs}%
\ from the Appendix in order to get:%
\begin{align}
\left\Vert R_{1j}\right\Vert _{L^{2}}  &  \leq C2^{-js}c_{j}^{2}\left(
t\right)  \left\{  \beta\left\Vert \nabla V\right\Vert _{L^{\infty}}\left\Vert
\nabla\eta\right\Vert _{B_{2,r}^{s-1}}+\beta\left\Vert \nabla\eta\right\Vert
_{L^{\infty}}\left\Vert \nabla V\right\Vert _{B_{2,r}^{s-1}}+\right.
\nonumber\\
&  \text{ \ \ \ \ \ \ \ \ \ \ \ \ \ \ \ \ \ \ \ }\left\Vert \nabla
W_{1}\right\Vert _{L^{\infty}}\left\Vert \nabla\eta\right\Vert _{B_{2,r}%
^{s-1}}+\left\Vert \nabla\eta\right\Vert _{L^{p_{2}}}\left\Vert \nabla
W_{1}\right\Vert _{B_{p_{1},r}^{s-1}}+\nonumber\\
&  \text{ \ \ \ \ \ \ \ \ \ \ \ \ \ \ \ \ \ \ \ }\beta\left\Vert \nabla
\eta\right\Vert _{L^{\infty}}\left\Vert \nabla V\right\Vert _{B_{2,r}^{s-1}%
}+\beta\left\Vert \nabla V\right\Vert _{L^{\infty}}\left\Vert \nabla
\eta\right\Vert _{B_{2,r}^{s-1}}+\nonumber\\
&  \text{\ \ \ \ \ \ \ \ \ \ \ \ \ \ \ \ \ \ \ \ }\left\Vert \nabla
h\right\Vert _{L^{\infty}}\left\Vert \nabla V\right\Vert _{B_{2,r}^{s-1}%
}+\left\Vert \nabla V\right\Vert _{L^{p_{2}}}\left\Vert \nabla h\right\Vert
_{B_{p_{1},r}^{s-1}}+\nonumber\\
&  \text{ \ \ \ \ \ \ \ \ \ \ \ \ \ \ \ \ \ \ \ }\left\Vert \nabla
h\right\Vert _{L^{\infty}}\left\Vert \nabla V\right\Vert _{B_{2,r}^{s-1}%
}+\left\Vert V\right\Vert _{L^{p_{2}}}\left\Vert \nabla h\right\Vert
_{B_{p_{1},r}^{s}}+\nonumber\\
&  \text{ \ \ \ \ \ \ \ \ \ \ \ \ \ \ \ \ \ \ }\left.  \left\Vert
\operatorname{div}W_{1}\right\Vert _{L^{\infty}}\left\Vert \nabla
\eta\right\Vert _{B_{2,r}^{s-1}}+\left\Vert \eta\right\Vert _{L^{p_{2}}%
}\left\Vert \operatorname{div}W_{1}\right\Vert _{B_{p_{1},r}^{s}}\right\}
\label{Rest2}%
\end{align}
and consequently:%
\[
\left\Vert R_{1j}\right\Vert _{L^{2}}\leq C2^{-js}c_{j}^{2}\left(  t\right)
\left(  \mathcal{W}_{s}U+U_{s}\left(  \mathcal{W}_{s}+\beta U\right)  \right)
.
\]
Proceeding as above, we get a similar bound for $R_{2j}$. Thus, we get that:%
\begin{equation}
T_{5}\leq\varepsilon C2^{-js}c_{j}^{2}\left(  t\right)  U_{j}\left(
1+\alpha\varepsilon\left(  \mathcal{W}_{s}+\beta U\right)  \right)  \left(
\mathcal{W}_{s}U+U_{s}\left(  \mathcal{W}_{s}+\beta U\right)  \right)
\label{T5.11}%
\end{equation}

\bigskip When $\alpha=0$, $T_{6}=T_{7}=0$ and we are in the position of
obtaining the first estimate. Combining $\left(  \text{\ref{T1.1}}\right)  $,
$\left(  \text{\ref{T2}}\right)  $, $\left(  \text{\ref{T3}}\right)  $,
$\left(  \text{\ref{T4}}\right)  $ and $\left(  \text{\ref{T5.11}}\right)  $
we get that:%

\begin{equation}
\frac{d}{dt}U_{j}^{2}\leq\sqrt{\varepsilon}CU_{j}^{2}\left(  \mathcal{W}%
_{s}+\beta U\right)  +\varepsilon C2^{-js}c_{j}^{5}U_{j}\left(  F_{s}%
+\mathcal{W}_{s}U+U_{s}\left(  \mathcal{W}_{s}+\beta U\right)  \right)  .
\label{Gronwal0}%
\end{equation}
Time integration of $\left(  \text{\ref{Gronwal0}}\right)  $ reveals the
following:%
\begin{align*}
U_{j}\left(  t\right)   &  \leq U_{j}\left(  0\right)  +\sqrt{\varepsilon
}C\int_{0}^{t}U_{j}\left(  \tau\right)  \left(  \mathcal{W}_{s}\left(
\tau\right)  +\beta U\left(  \tau\right)  \right)  d\tau+\\
&  \sqrt{\varepsilon}C\int_{0}^{t}2^{-j\sigma}c_{j}^{5}\left(  \tau\right)
\left(  F_{s}\left(  \tau\right)  +\mathcal{W}_{s}\left(  \tau\right)
U\left(  \tau\right)  +U_{s}\left(  \tau\right)  \left(  \mathcal{W}%
_{s}\left(  \tau\right)  +\beta U\left(  \tau\right)  \right)  \right)  d\tau.
\end{align*}
Multiplying the last inequality with $2^{js}$ and performing an $\ell
^{r}\left(  \mathbb{Z}\right)  $-summation we end up with:%
\begin{align}
U_{s}\left(  t\right)   &  \leq U_{s}\left(  0\right)  +\sqrt{\varepsilon
}C\int_{0}^{t}F_{s}\left(  \tau\right)  d\tau+\sqrt{\varepsilon}C\int_{0}%
^{t}\mathcal{W}_{s}\left(  \tau\right)  U\left(  \tau\right)  d\tau\nonumber\\
&  +\sqrt{\varepsilon}C\int_{0}^{t}U_{s}\left(  \tau\right)  \left(
\mathcal{W}_{s}\left(  \tau\right)  +\beta U\left(  \tau\right)  \right)
d\tau.\nonumber\\
&  \leq U_{s}\left(  0\right)  +\sqrt{\varepsilon}C\int_{0}^{t}F_{s}\left(
\tau\right)  d\tau+\sqrt{\varepsilon}C\int_{0}^{t}U_{s}\left(  \tau\right)
\left(  \mathcal{W}_{s}\left(  \tau\right)  +\beta U\left(  \tau\right)
\right)  d\tau. \label{Gron3}%
\end{align}
Obviously, relation $\left(  \text{\ref{Gron3}}\right)  $ and Gronwall's lemma
imply the explosion criteria.\bigskip

Let us now bound the term $T_{6}$:%
\begin{align*}
T_{6}  &  =-\frac{1}{2}\alpha\varepsilon\left\langle V_{j},\left(
\partial_{t}h+\beta\partial_{t}\eta\right)  V_{j}\right\rangle _{L^{2}}%
-\frac{1}{2}\alpha\varepsilon^{2}d\left\langle \nabla V_{j},\left(
\partial_{t}h+\beta\partial_{t}\eta\right)  \nabla V_{j}\right\rangle _{L^{2}%
}\\
&  \leq\alpha\varepsilon U_{j}^{2}\left(  \left\Vert \partial_{t}h\right\Vert
_{L^{\infty}}+\beta\left\Vert \partial_{t}\eta\right\Vert _{L^{\infty}%
}\right)  .
\end{align*}
Using the first equation of $\left(  \text{\ref{BBM}}\right)  $ we write:%
\[
\partial_{t}\eta=\varepsilon\left(  I-\varepsilon b\Delta\right)
^{-1}f-\left(  I-\varepsilon b\Delta\right)  ^{-1}\operatorname{div}\left(
V+\varepsilon\left(  \eta W_{1}+hV+\beta\eta V\right)  \right)
\]
and using again relation $\left(  \text{\ref{relaties}}\right)  $ combined
with the fact that $\left(  I-\varepsilon b\Delta\right)  ^{-1}$ has at most
norm $1$ when regarded as an $L^{2}$ to $L^{2}$ operator, we obtain that:%
\begin{align}
\left\Vert \partial_{t}\eta\right\Vert _{L^{\infty}}  &  \leq\left\Vert
\varepsilon\left(  I-\varepsilon b\Delta\right)  ^{-1}f-\left(  I-\varepsilon
b\Delta\right)  ^{-1}\operatorname{div}\left(  V+\varepsilon\left(  \eta
W_{1}+hV+\beta\eta V\right)  \right)  \right\Vert _{B_{2,1}^{\frac{n}{2}}%
}\nonumber\\
&  \leq\varepsilon\left\Vert f\right\Vert _{B_{2,r}^{s}}+\left\Vert
V\right\Vert _{B_{2,r}^{s}}+\varepsilon\left\Vert \eta W_{1}+hV+\beta\eta
V\right\Vert _{B_{2,r}^{s}}\nonumber\\
&  \leq\varepsilon\left\Vert f\right\Vert _{B_{2,r}^{s}}+\left\Vert
V\right\Vert _{B_{2,r}^{s}}+\varepsilon\beta\left\Vert \left(  \eta,V\right)
\right\Vert _{B_{2,r}^{s}}^{2}+\varepsilon\left\Vert \left(  \eta,V\right)
\right\Vert _{B_{2,r}^{s}}\left\Vert \left(  h,W_{1}\right)  \right\Vert
_{L^{\infty}}\nonumber\\
&  +\varepsilon\left\Vert \left(  \eta,V\right)  \right\Vert _{L^{p_{2}}%
}\left\Vert \left(  \nabla h,\nabla W_{1}\right)  \right\Vert _{B_{p_{1}%
,r}^{s-1}}\nonumber\\
&  \leq C(U_{s}+\varepsilon F_{s}+\varepsilon U_{s}\left(  \mathcal{W}%
_{s}+\beta U_{s}\right)  ). \label{derivatatemporala}%
\end{align}
Thus, putting togeter the last estimates, we find that:%
\begin{equation}
T_{6}\leq\alpha\varepsilon U_{j}^{2}\left(  \mathcal{W}_{s}+\beta
U_{s}+\varepsilon\beta F_{s}+\varepsilon\left(  \mathcal{W}_{s}+\beta
U_{s}\right)  ^{2}\right)  . \label{T6}%
\end{equation}

Finally, let us estimate the last term:%
\[
T_{7}=-\alpha d\varepsilon^{2}\left\langle \partial_{t}\nabla V_{j}%
,\nabla\left(  \beta\eta+h\right)  \otimes V_{j}\right\rangle _{L^{2}}%
\leq\alpha d\varepsilon^{2}\left\Vert \partial_{t}\nabla V_{j}\right\Vert
_{L^{2}}\left\Vert V_{j}\right\Vert _{L^{2}}\left(  \beta\left\Vert \nabla
\eta\right\Vert _{L^{\infty}}+\left\Vert \nabla h\right\Vert _{L^{\infty}%
}\right)  .
\]
Using the second equation of $\left(  \text{\ref{BBM}}\right)  $ we write
that:%
\begin{align*}
\partial_{t}\nabla V_{j}  &  =\varepsilon\left(  I-\varepsilon d\Delta\right)
^{-1}\nabla\left(  g_{j}\right)  -\left(  I-\varepsilon\Delta\right)
^{-1}\nabla^{2}\eta_{j}-\varepsilon\left(  I-\varepsilon\Delta\right)
^{-1}\nabla\Delta_{j}\left[  \left(  W_{2}+\beta V\right)  \cdot\nabla
V\right] \\
&  -\varepsilon\left(  I-\varepsilon\Delta\right)  ^{-1}\nabla\Delta
_{j}\left(  V\cdot\nabla W_{3}\right)
\end{align*}
and because $(\varepsilon d)^{\frac{1}{2}}\left(  I-\varepsilon d\Delta
\right)  ^{-1}\nabla$ and $\varepsilon d\left(  I-\varepsilon d\Delta\right)
^{-1}\nabla^{2}$ have $O\left(  1\right)  $-norms when regarded as operators
from $L^{2}$ to $L^{2}$, we can write that%
\begin{align*}
\varepsilon d\left\Vert \partial_{t}\nabla V_{j}\right\Vert _{L^{2}}  &  \leq
C\left(  \varepsilon^{\frac{3}{2}}\sqrt{d}\left\Vert g_{j}\right\Vert _{L^{2}%
}+\left\Vert \eta_{j}\right\Vert _{L^{2}}+\varepsilon^{\frac{3}{2}}\sqrt
{d}\left\Vert \Delta_{j}\left[  \left(  W_{2}+\beta V\right)  \cdot\nabla
V\right]  \right\Vert _{L^{2}}+\varepsilon^{\frac{3}{2}}\sqrt{d}\left\Vert
\Delta_{j}\left(  V\cdot\nabla W_{3}\right)  \right\Vert _{L^{2}}\right) \\
&  \leq C\max\left\{  1,\sqrt{\varepsilon d}\right\}  \left(  U_{j}%
+2^{-js}c_{j}^{4}\left(  \varepsilon F_{s}+\varepsilon\left(  \mathcal{W}%
_{s}+\beta U_{s}\right)  ^{2}\right)  \right)  .
\end{align*}
Thus, we get that:%
\begin{equation}
T_{7}\leq\alpha\varepsilon U_{j}^{2}\left(  \mathcal{W}_{s}+\beta U\right)
+\alpha\varepsilon C2^{-js}c_{j}^{4}\left(  t\right)  U_{j}\left(  \varepsilon
F_{s}\left(  \mathcal{W}_{s}+\beta U\right)  +\varepsilon U_{s}\left(
\mathcal{W}_{s}+\beta U_{s}\right)  ^{2}\right)  . \label{T7.1}%
\end{equation}

Let us consider $\alpha=1$. Putting together estimates $\left(
\text{\ref{T1.1}}\right)  $-$\left(  \text{\ref{T5.11}}\right)  $ we get that:%
\begin{gather}
\frac{d}{dt}\left[  \int\eta_{j}^{2}+\varepsilon b\left\vert \nabla\eta
_{j}\right\vert ^{2}+\left(  1+\varepsilon\left(  \eta+h\right)  \right)
\left(  V_{j}^{2}+\varepsilon d\nabla V_{j}:\nabla V_{j}\right)  \right]
\leq\varepsilon CU_{j}^{2}\left(  \varepsilon\beta F_{s}+\mathcal{W}_{s}+\beta
U_{s}+\varepsilon\left(  \mathcal{W}_{s}+\beta U_{s}\right)  ^{2}\right)
\nonumber\\
+\varepsilon C2^{-js}c_{j}\left(  t\right)  U_{j}\left(  F_{s}\left(
1+\varepsilon\left(  \mathcal{W}_{s}+\beta U_{s}\right)  \right)
+U_{s}\left(  \mathcal{W}_{s}+\beta U_{s}\right)  \left(  1+\varepsilon\left(
\mathcal{W}_{s}+\beta U_{s}\right)  \right)  \right)  . \label{final1}%
\end{gather}

\subsubsection{Existence and uniqueness of solutions}

We are now in the position to prove the existence and uniqueness of solutions
for system $\left(  \text{\ref{BBM}}\right)  $. We will use the so called
Friedrichs method. For all $m\in\mathbb{N}$, let us consider $\mathbb{E}_{m}$
the low frequency cut-off operator defined by:%
\[
\mathbb{E}_{m}f=\mathcal{F}^{-1}\left(  \chi_{B\left(  0,m\right)  }\hat
{f}\right)  .
\]
We define the space
\[
L_{m}^{2}=\left\{  f\in L^{2}:\text{\textrm{Supp}}\hat{f}\subset B\left(
0,m\right)  \right\}
\]
which, endowed with the $\left\Vert \cdot\right\Vert _{L^{2}}$-norm is a
Banach space. Let us observe that due to Bernstein's lemma, all Sobolev norms
are equivalent on $L_{m}^{2}$. For all $m\in\mathbb{N}$, we consider the
following differential equation on $L_{m}^{2}$:%
\begin{equation}
\left\{
\begin{array}
[c]{l}%
\partial_{t}\eta=F_{m}\left(  \eta,V\right)  ,\\
\partial V=G_{m}\left(  \eta,V\right)  ,\\
\eta_{|t=0}=\mathbb{E}_{m}\eta_{0},\text{ }V_{|t=0}=\mathbb{E}_{m}V_{0},
\end{array}
\right.  \label{En2}%
\end{equation}
where $\left(  F_{m},G_{m}\right)  :L_{m}^{2}\times\left(  L_{m}^{2}\right)
^{n}\rightarrow L_{m}^{2}\times\left(  L_{m}^{2}\right)  ^{n}$ are defined by:%
\begin{align}
F_{m}\left(  \eta,V\right)   &  =-\mathbb{E}_{m}\left(  \left(  I-\varepsilon
b\Delta\right)  ^{-1}\left[  \operatorname{div}V+\varepsilon\operatorname{div}%
\left(  \eta W_{1}+hV+\beta\eta V\right)  -\varepsilon f\right]  \right)
,\label{Fm}\\
G_{m}\left(  \eta,V\right)   &  =-\mathbb{E}_{m}\left(  \left(  I-\varepsilon
d\Delta\right)  ^{-1}\left[  \nabla\eta+\varepsilon\left(  W_{2}+\beta\right)
\cdot\nabla V+\varepsilon V\cdot\nabla W_{3}-\varepsilon g\right]  \right)  .
\label{Gm}%
\end{align}
It transpires that due to the equivalence of the Sobolev norm, $\left(
F_{m},G_{m}\right)  $ is continuous and locally Lipschitz on $L_{m}^{2}%
\times\left(  L_{m}^{2}\right)  ^{n}$. Thus, the classical Picard theorem
ensures that there exists a nonnegative time $T_{m}>0$ and a unique solution
$\left(  \eta^{m},V^{m}\right)  :\mathcal{C}^{1}\left(  [0,T_{m}],L_{m}%
^{2}\times\left(  L_{m}^{2}\right)  ^{n}\right)  $. Let us denote by
\[
\left\{
\begin{array}
[c]{l}%
U_{j}^{m}(t)=\left(  \left\Vert \left(  \Delta_{j}\eta^{m}(t),\Delta_{j}%
V^{m}\left(  t\right)  \right)  \right\Vert _{L^{2}}^{2}+\varepsilon\left\Vert
\left(  \sqrt{b}\nabla\Delta_{j}\eta^{m}\left(  t\right)  ,\sqrt{d}%
\nabla\Delta_{j}V^{m}\left(  t\right)  \right)  \right\Vert _{L^{2}}%
^{2}\right)  ^{\frac{1}{2}},\\
U_{s}^{m}\left(  t\right)  =\left\Vert \left(  2^{js}U_{j}^{m}\left(
t\right)  \right)  _{j\in\mathbb{Z}}\right\Vert _{\ell^{r}(\mathbb{Z})}%
,U^{m}\left(  t\right)  =\left\Vert \left(  \eta^{m},\nabla\eta^{m}%
,V^{m},\nabla V^{m}\right)  \right\Vert _{L^{\infty}\cap L^{p_{2}}}.
\end{array}
\right.
\]
Thanks to the property $\mathbb{E}_{m}^{2}=\mathbb{E}_{m}$, we get that the
estimate obtained in \eqref{Gron3} still holds true for $\left(  \eta
^{m},V^{m}\right)  $, namely:%
\begin{align*}
U_{s}^{m}\left(  t\right)   &  \leq U_{s}^{m}\left(  0\right)  +\sqrt
{\varepsilon}C\int_{0}^{t}F_{s}\left(  \tau\right)  d\tau+\sqrt{\varepsilon
}C\int_{0}^{t}U_{s}^{m}\left(  \tau\right)  \left(  U^{m}\left(  \tau\right)
+\mathcal{W}_{s}\left(  \tau\right)  \right)  d\tau\\
&  \leq U_{s}\left(  0\right)  +\sqrt{\varepsilon}C\int_{0}^{t}\left(
F_{s}\left(  \tau\right)  +\mathcal{W}_{s}^{2}\left(  \tau\right)  \right)
d\tau+\sqrt{\varepsilon}C\int_{0}^{t}\left(  U_{s}^{m}\left(  \tau\right)
\right)  ^{2}d\tau.
\end{align*}
We consider%
\[
T^{\prime}=\sup\left\{  T>0:\sqrt{\varepsilon}C\int_{0}^{t}\left(
F_{s}\left(  \tau\right)  +\mathcal{W}^{2}\left(  \tau\right)  \right)
d\tau\leq U_{s}\left(  0\right)  \right\}
\]
and%
\[
\bar{T}_{m}=\sup\left\{  T>0:U_{s}\left(  \tau\right)  \leq3U_{s}\left(
0\right)  \right\}  .
\]
Gronwall's lemma ensures the existence of a constant $\bar{C}=\bar{C}\left(
s,b,d\right)  $ such that:
\[
\bar{T}_{m}\geq\min\left\{  \frac{\bar{C}}{\sqrt{\varepsilon}U_{s}\left(
0\right)  },T^{\prime}\right\}  :=\bar{T}%
\]
and thus, the sequence $\left(  \eta^{m},V^{m}\right)  \in\mathcal{C}\left(
[0,T],B_{2,r}^{s_{1}}\times\left(  B_{2,r}^{s_{2}}\right)  ^{n}\right)  $ is
uniformly bounded i.e.%
\begin{equation}
\left\Vert \left(  \eta^{m},V^{m}\right)  \right\Vert _{E}\leq3U_{s}\left(
0\right)  . \label{Unif1}%
\end{equation}

From the relation%
\[
\partial_{t}\eta^{m}=-\mathbb{E}_{m}\left(  \left(  I-\varepsilon
b\Delta\right)  ^{-1}\left[  \operatorname{div}V^{m}+\varepsilon
\operatorname{div}\left(  \eta^{m}W_{1}+hV^{m}+\beta\eta^{m}V^{m}\right)
-\varepsilon f\right]  \right)
\]
and from $\left(  \text{\ref{Unif1}}\right)  $ we get that the sequence
$\left(  \partial_{t}\eta^{m}\right)  _{m\in\mathbb{N}}$ is uniformly bounded
in $B_{2,r}^{s-1}$ on $\left[  0,\bar{T}\right]  $. Next, considering for all
$p\in\mathbb{N}$ a smooth function $\phi_{p}$ such that:%
\[
\left\{
\begin{array}
[c]{c}%
\mathrm{Supp}\text{ }\phi_{p}\subset B\left(  0,p+1\right)  ,\\
\phi_{p}=1\text{ on }B\left(  0,p\right)
\end{array}
\right.
\]
it follows that for each $p\in\mathbb{N}$, the sequence $\left(  \phi_{p}%
\eta^{m}\right)  _{m\in\mathbb{N}}$ is uniformly equicontinuous on $\left[
0,\bar{T}\right]  $ and that for all $t\in\left[  0,\bar{T}\right]  $, the set
$\left\{  \phi_{p}\eta^{m}\left(  t\right)  :m\in\mathbb{N}\right\}  $ is
relatively compact in $B_{2,r}^{s-1}$. Thus, the Ascoli-Arzela Theorem
combined with Proposition \ref{compact} and with Cantor's diagonal process
provides us a subsequence of $\left(  \eta^{m}\right)  _{m\in\mathbb{N}}$ and
a tempered distribution $\eta\in\mathcal{C}\left(  [0,T],S^{\prime}\right)  $
such that for all $\phi\in\mathcal{D}\left(  \mathbb{R}^{n}\right)  $:%
\[
\phi\eta^{m}\rightarrow\phi\eta\text{ in }\mathcal{C}\left(  [0,T],B_{2,r}%
^{s-1}\right)  .
\]
Moreover, owing to the Fatou property for Besov spaces, see Proposition
\ref{PropBesov}, we get that $\eta\in L^{\infty}\left(  [0,T],B_{2,r}^{s_{1}%
}\right)  $ and thus, using interpolation we get that:%
\begin{equation}
\phi\eta^{m}\rightarrow\phi\eta\text{ in }\mathcal{C}\left(  [0,T],B_{2,r}%
^{s_{b}-\gamma}\right)  \label{conveta}%
\end{equation}
for any $\gamma>0$. Of course, using the same argument we can construct a
$V\in L^{\infty}\left(  \left[  0,\bar{T}\right]  ,\left(  B_{2,r}^{s_{2}%
}\right)  ^{n}\right)  $ such that for any $\psi\in\left(  \mathcal{D}\left(
\mathbb{R}^{n}\right)  \right)  ^{n}$:%
\begin{equation}
\psi V^{m}\rightarrow\psi V\text{ in }\mathcal{C}\left(  [0,T],\left(
B_{2,r}^{s_{d}-\gamma}\right)  ^{n}\right)  \label{convv}%
\end{equation}
for any $\gamma>0$. Also, by the Fatou property in Besov spaces we get that
$\left(  \eta,V\right)  \in L^{\infty}\left(  \left[  0,\bar{T}\right]
,B_{2,r}^{s_{b}}\times\left(  B_{2,r}^{s_{d}}\right)  ^{n}\right)  $. We claim
that the properties enlisted above allow us to pass to the limit when
$m\rightarrow\infty$ in the equation verified by $\eta^{m}$ and $V^{m}$. Let
us show on two examples, how this process is carried out in practice. Let
$\phi\in\mathcal{D}\left(  \mathbb{R}^{n}\right)  $ and let us write that:%
\begin{align}
\left\vert \int\phi\operatorname{div}\left[  \left(  \eta^{m}-\eta\right)
W_{1}\right]  \right\vert  &  =\left\vert \int\left(  \eta^{m}-\eta\right)
W_{1}\nabla\phi\right\vert \leq\left\vert \int\left(  \eta^{m}-\eta\right)
\left(  S_{q}W_{1}\right)  \nabla\phi\right\vert +\left\vert \int\left(
\eta^{m}-\eta\right)  \left(  \left(  Id-S_{q}\right)  W_{1}\right)
\nabla\phi\right\vert \nonumber\\
&  \leq\left\vert \int\left(  \eta^{m}-\eta\right)  \left(  S_{q}W_{1}\right)
\nabla\phi\right\vert +C\left\Vert \left(  Id-S_{q}\right)  W_{1}\right\Vert
_{B_{p_{1},r}^{s+1}}\left\Vert \left(  \eta^{m}-\eta\right)  \nabla
\phi\right\Vert _{B_{p_{1}^{\prime},r^{\prime}}^{-s-1}}\nonumber\\
&  \leq\left\vert \int\left(  \eta^{m}-\eta\right)  \left(  S_{q}W_{1}\right)
\nabla\phi\right\vert +C\left\Vert \left(  Id-S_{q}\right)  \nabla
W_{1}\right\Vert _{B_{p_{1},r}^{s}}\left\Vert \left(  \eta^{m}-\eta\right)
\nabla\phi\right\Vert _{B_{p_{1}^{\prime},\infty}^{0}}\nonumber\\
&  \leq\left\vert \int\left(  \eta^{m}-\eta\right)  \left(  S_{q}W_{1}\right)
\nabla\phi\right\vert +C\left\Vert \left(  Id-S_{q}\right)  \nabla
W_{1}\right\Vert _{B_{p_{1},r}^{s}}\left(  \left\Vert \eta^{m}\right\Vert
_{L^{p_{2}}}+\left\Vert \eta\right\Vert _{L^{p_{2}}}\right)  \left\Vert
\nabla\phi\right\Vert _{L^{2}}\nonumber\\
&  \leq\left\vert \int\left(  \eta^{m}-\eta\right)  \left(  S_{q}W_{1}\right)
\nabla\phi\right\vert +CU_{s}\left(  0\right)  \left\Vert \left(
Id-S_{q}\right)  \nabla W_{1}\right\Vert _{B_{p_{1},r}^{s}}\left\Vert
\nabla\phi\right\Vert _{L^{2}}.\label{convtermen1}%
\end{align}
The fact that the first term of $\left(  \text{\ref{convtermen1}}\right)  $
tends to zero as $m\rightarrow\infty$ is a consequence of $\left(
\text{\ref{conveta}}\right)  $. The second term tends to zero as
$q\rightarrow\infty$ owing to the fact that $\nabla W_{1}\in B_{p_{1},r}^{s}$.
Let us also show how to deal with the nonlinear terms. We write that%
\begin{align}
\left\vert \int\phi\operatorname{div}\left(  \eta^{m}V^{m}-\eta V\right)
\right\vert  &  =\left\vert \int\left(  \eta^{m}V^{m}-\eta V\right)
\nabla\phi\right\vert \leq\left\vert \int\eta^{m}\left(  V^{m}-V\right)
\nabla\phi\right\vert +\left\vert \int\left(  \eta^{m}-\eta\right)
V\nabla\phi\right\vert \nonumber\\
&  \leq C\left\Vert V^{m}\nabla\phi-V\nabla\phi\right\Vert _{B_{2,r}^{s-1}%
}\left\Vert \eta^{m}\right\Vert _{B_{2,r^{\prime}}^{1-s}}+\left\vert
\int\left(  \eta^{m}-\eta\right)  V\nabla\phi\right\vert \nonumber\\
&  \leq C\left\Vert V^{m}\nabla\phi-V\nabla\phi\right\Vert _{B_{2,r}^{s-1}%
}\left\Vert \eta^{m}\right\Vert _{B_{2,r}^{s}}+\left\vert \int\left(  \eta
^{m}-\eta\right)  V\nabla\phi\right\vert \nonumber\\
&  \leq CU_{s}\left(  0\right)  \left\Vert V^{m}\nabla\phi-V\nabla
\phi\right\Vert _{B_{2,r}^{s-1}}+\left\vert \int\left(  \eta^{m}-\eta\right)
V\nabla\phi\right\vert .\label{convtermen2}%
\end{align}
The first term of $\left(  \text{\ref{convtermen2}}\right)  $ tends to zero as
$m\rightarrow\infty$, owing to relation $\left(  \text{\ref{convv}}\right)  $.
In order to show that the second term of $\left(  \text{\ref{convtermen2}%
}\right)  $ tends to zero as $m\rightarrow\infty$ one proceeds exactly like we
did in $\left(  \text{\ref{convtermen1}}\right)  $.

Recovering the time regularity of $\left(  \eta,V\right)  $ is again
classical. One can show for example that for all $j\in\mathbb{Z}$ we have%
\[
S_{j}\eta\in\text{ }\mathcal{C}\left(  [0,T],B_{2,r}^{s_{1}}\right)
\]
and by using energy estimates:
\[
\lim_{j\rightarrow\infty}\left\Vert \eta-S_{j}\eta\right\Vert _{L_{\bar{T}%
}^{\infty}\left(  B_{2,r}^{s_{1}}\right)  }=0.
\]
As for the uniqueness of solutions let us consider $\left(  \eta^{1}%
,V^{1}\right)  $, $\left(  \eta^{2},V^{2}\right)  $ two solutions of $\left(
\text{\ref{BBM}}\right)  $. The system verified by
\[
\left(  \delta\eta,\delta V\right)  =\left(  \eta^{1}-\eta^{2},V^{1}%
-V^{2}\right)
\]
is the following:%
\begin{equation}
\left\{
\begin{array}
[c]{l}%
\left(  I-\varepsilon b\Delta\right)  \partial_{t}\delta\eta
+\operatorname{div}\delta V+\varepsilon\operatorname{div}\left(  \delta
\eta\tilde{W}_{1}+\tilde{h}\delta V\right)  =0,\\
\left(  I-\varepsilon d\Delta\right)  \partial_{t}\delta V+\nabla\delta
\eta+\varepsilon\tilde{W}_{2}\cdot\nabla\delta V+\delta V\cdot\nabla\tilde
{W}_{3}=0\\
\eta_{|t=0}=0,\text{ }V_{|t=0}=0,
\end{array}
\right.  \label{unicitate1}%
\end{equation}
with%
\[
\left\{
\begin{array}
[c]{cc}%
\tilde{h}=h+\beta\eta^{2}, & \tilde{W}_{1}=W_{1}+\beta V^{1},\\
\tilde{W}_{2}=W_{2}+\beta V^{1}, & \tilde{W}_{3}=W_{3}+\beta V^{2}.
\end{array}
\right.
\]
Let us multiply the firs equation of $\left(  \text{\ref{unicitate1}}\right)
$ with $\delta\eta$ and the second one with $\delta V$ such that by repeated
integration by parts, one obtains:%
\begin{align}
\frac{1}{2}\frac{d}{dt}\delta U^{2}\left(  t\right)   &  \leq C\left(
\varepsilon\left\Vert \operatorname{div}\tilde{W}_{1}\right\Vert _{L^{\infty}%
}+\varepsilon\left\Vert \operatorname{div}\tilde{W}_{2}\right\Vert
_{L^{\infty}}\right)  \delta U^{2}\left(  t\right) \label{unicitate2}\\
&  +C\left(  \varepsilon\left\Vert \nabla\tilde{W}_{3}\right\Vert _{L^{\infty
}}+\frac{\sqrt{\varepsilon}\left(  \left\Vert \tilde{h}\right\Vert
_{L^{\infty}}+\left\Vert \nabla\tilde{h}\right\Vert _{L^{\infty}}\right)
}{\max\left\{  \sqrt{b},\sqrt{d}\right\}  }\right)  \delta U^{2}\left(
t\right) \nonumber
\end{align}
with%
\[
\delta U^{2}\left(  t\right)  :\overset{not.}{=}\int\left\Vert \left(
\delta\eta,\delta V\right)  \right\Vert _{L^{2}}^{2}+\left\Vert \left(
b\nabla\delta\eta,d\nabla\delta V\right)  \right\Vert _{L^{2}}^{2}.
\]
As $\delta U\left(  0\right)  =0$, uniqueness follows by Gronwall's Lemma and
thus the proof of Theorem \ref{Teorema1} is achieved.

\subsection{The lower bound on the time of existence}

In this section we prove Theorem \ref{Teorema1.1}. Let us consider $\left(
\eta_{0},V_{0}\right)  \in$ $X_{b,d,r}^{s}$ and let us denote by
\[
R_{0}^{\varepsilon}\overset{not.}{=}\left\Vert \left(  \eta_{0},V_{0}\right)
\right\Vert _{X_{b,d,r}^{s,\varepsilon}}.
\]
Then, according to Theorem \ref{Teorema1}, for all $\varepsilon>0$, there
exists an unique maximal solution of $\mathcal{S}_{\varepsilon}\left(
\mathcal{D}^{\varepsilon}\right)  $ which we denote by $\left(  \eta
^{\varepsilon}\left(  t\right)  ,V^{\varepsilon}\left(  t\right)  \right)  \in
X_{b,d,r}^{s}$. We introduce the following notations:%
\begin{equation}
\left\{
\begin{array}
[c]{l}%
U_{j}^{2}(\varepsilon,t)=\left\Vert \left(  \eta_{j}^{\varepsilon}%
(t),V_{j}^{\varepsilon}\left(  t\right)  \right)  \right\Vert _{L^{2}}%
^{2}+\varepsilon\left\Vert \left(  \sqrt{b}\nabla\eta_{j}^{\varepsilon}\left(
t\right)  ,\sqrt{d}\nabla V_{j}^{\varepsilon}\left(  t\right)  \right)
\right\Vert _{L^{2}}^{2},\\
\left\Vert \left(  \eta^{\varepsilon}\left(  t\right)  ,V^{\varepsilon}\left(
t\right)  \right)  \right\Vert _{X_{b,d,r}^{s,\varepsilon}}=U_{s}\left(
\varepsilon,t\right)  =\left\Vert \left(  2^{js}U_{j}\left(  \varepsilon
,t\right)  \right)  _{j\in\mathbb{Z}}\right\Vert _{\ell^{r}(\mathbb{Z})}.
\end{array}
\right.  \label{NotEps}%
\end{equation}
Let us consider
\[
T_{\star}^{\varepsilon}=\sup\left\{  T\in\lbrack0,T^{\varepsilon}]:\sup
_{t\in\left[  0,T\right]  }\left\Vert \left(  \eta^{\varepsilon}\left(
t\right)  ,V^{\varepsilon}\left(  t\right)  \right)  \right\Vert
_{X_{b,d,r}^{s,\varepsilon}}\leq\left(  1+e\sqrt{7}\right)  R_{0}%
^{\varepsilon}\right\}  ,
\]
and
\[
\varepsilon_{0}=\min\left\{  \varepsilon_{01},\varepsilon_{02},\varepsilon
_{03},\varepsilon_{04}\right\}  ,
\]
where%
\[
\left\{
\begin{array}
[c]{l}%
\varepsilon_{01}=\frac{3}{4C_{1}\left(  1+e\sqrt{7}\right)  R_{0}^{1}%
+4\sup\limits_{\tilde{\varepsilon}\in\left[  0,1\right]  }\sup\limits_{\tau
\in\left[  0,T^{\tilde{\varepsilon}}\right]  }\left\Vert h^{\tilde
{\varepsilon}}\left(  \tau\right)  \right\Vert _{L^{\infty}}},\text{
}\varepsilon_{02}=\frac{1}{2\left(  1+e\sqrt{7}\right)  R_{0}^{1}%
+2\sup\limits_{\tilde{\varepsilon}\in\left[  0,1\right]  }\sup\limits_{\tau
\in\left[  0,T^{\tilde{\varepsilon}}\right]  }\mathcal{W}_{s}^{\tilde
{\varepsilon}}\left(  \tau\right)  },\\
\varepsilon_{03}=\frac{\left(  1+e\sqrt{7}\right)  R_{0}^{0}+\sup
\limits_{\tilde{\varepsilon}\in\left[  0,1\right]  }\sup\limits_{\tau
\in\left[  0,T^{\tilde{\varepsilon}}\right]  }\mathcal{W}_{s}^{\tilde
{\varepsilon}}\left(  \tau\right)  }{2\sup\limits_{\tilde{\varepsilon}%
\in\left[  0,1\right]  }\sup\limits_{\tau\in\left[  0,T^{\tilde{\varepsilon}%
}\right]  }F_{s}^{\tilde{\varepsilon}}\left(  \tau\right)  },\text{
}\varepsilon_{04}=\frac{1}{2\left(  1+e\sqrt{7}\right)  R_{0}^{1}%
+2\sup\limits_{\tilde{\varepsilon}\in\left[  0,1\right]  }\sup\limits_{\tau
\in\left[  0,T^{\tilde{\varepsilon}}\right]  }\mathcal{W}_{s}^{\tilde
{\varepsilon}}\left(  \tau\right)  }.
\end{array}
\right.
\]
where $C_{1}$ is the constant appearing in the embedding $B_{2,1}^{\frac{n}%
{2}}\hookrightarrow L^{\infty}$ i.e.%
\[
\left\Vert f\right\Vert _{L^{\infty}}\leq C_{1}\left\Vert f\right\Vert
_{B_{2,1}^{\frac{n}{2}}}.
\]
Let us put%
\begin{equation}
\tilde{C}=\min\left\{  \frac{R_{0}^{0}}{3eC\sup\limits_{\tilde{\varepsilon}%
\in\left[  0,1\right]  }\sup\limits_{\tau\in\left[  0,T^{\tilde{\varepsilon}%
}\right]  }F_{s}^{\tilde{\varepsilon}}\left(  \tau\right)  },\frac
{1}{16C\left(  1+e\sqrt{7}\right)  R_{0}^{1}},\frac{1}{16C\sup\limits_{\tilde
{\varepsilon}\in\left[  0,1\right]  }\sup\limits_{\tau\in\left[
0,T^{\tilde{\varepsilon}}\right]  }\mathcal{W}_{s}^{\tilde{\varepsilon}%
}\left(  \tau\right)  }\right\}  \label{constanta1}%
\end{equation}
where $C$ is the universal constant appearing in $\left(  \text{\ref{final1}%
}\right)  $. We claim that for all $\varepsilon\leq\varepsilon_{0}$
\[
T_{\star}^{\varepsilon}\geq\min\left\{  \frac{\tilde{C}}{\varepsilon
},T^{\varepsilon}\right\}  .
\]
Let us suppose that this is not the case. Then there exists an $\varepsilon>0$
such that
\begin{equation}
T_{\star}^{\varepsilon}<\min\left\{  \frac{\tilde{C}}{\varepsilon
},T^{\varepsilon}\right\}  .\label{bound1}%
\end{equation}
We begin by writing that%
\[
\frac{1}{4}\leq1+\varepsilon\left(  \eta^{\varepsilon}\left(  t,x\right)
+h^{\varepsilon}\left(  t,x\right)  \right)  \leq\frac{7}{4},
\]
for all $\left(  t,x\right)  \in\left[  0,T_{\star}^{\varepsilon}\right]
\times\mathbb{R}^{n}$, owing to the fact that
\begin{equation}
\varepsilon\leq\varepsilon_{01}.\label{eps1}%
\end{equation}
Thus, denoting by
\[
N_{j}^{2}\left(  \varepsilon,t\right)  :=\int_{\mathbb{R}^{n}}\left(  \eta
_{j}^{\varepsilon}\right)  ^{2}\left(  t\right)  +\varepsilon b\left\vert
\nabla\eta_{j}^{\varepsilon}\left(  t\right)  \right\vert ^{2}+\left(
1+\varepsilon\left(  \eta^{\varepsilon}\left(  t\right)  +h^{\varepsilon
}\left(  t\right)  \right)  \right)  \left(  \left\vert V_{j}^{\varepsilon
}\right\vert ^{2}\left(  t\right)  +\varepsilon d\nabla V_{j}^{\varepsilon
}\left(  t\right)  :\nabla V_{j}^{\varepsilon}\left(  t\right)  \right)
\]
we see that for all $t\in\left[  0,T_{\star}^{\varepsilon}\right]  $ we get
that:%
\begin{equation}
\frac{1}{2}U_{j}\left(  \varepsilon,t\right)  \leq N_{j}\left(  \varepsilon
,t\right)  \leq\frac{\sqrt{7}}{2}U_{j}\left(  \varepsilon,t\right)
.\label{ineg}%
\end{equation}
Recall that according to $\left(  \text{\ref{final1}}\right)  $ we have that:%
\begin{gather}
\frac{d}{dt}N_{j}^{2}\left(  \varepsilon,t\right)  \leq\varepsilon CU_{j}%
^{2}\left(  \varepsilon,t\right)  \left(  \varepsilon F_{s}^{\varepsilon
}\left(  t\right)  +\mathcal{W}_{s}^{\varepsilon}\left(  t\right)
+U_{s}\left(  \varepsilon,t\right)  +\varepsilon\left(  \mathcal{W}%
_{s}^{\varepsilon}\left(  t\right)  +U_{s}\left(  \varepsilon,t\right)
\right)  ^{2}\right)  \label{final1eps}\\
+\varepsilon C2^{-js}c_{j}^{\varepsilon}\left(  t\right)  U_{j}\left(
\varepsilon,t\right)  F_{s}^{\varepsilon}\left(  t\right)  \left\{
1+\varepsilon\left(  \mathcal{W}_{s}^{\varepsilon}\left(  t\right)
+U_{s}\left(  \varepsilon,t\right)  \right)  \right\}  \\
+\varepsilon C2^{-js}c_{j}^{\varepsilon}\left(  t\right)  U_{j}\left(
\varepsilon,t\right)  U_{s}\left(  \varepsilon,t\right)  \left(
\mathcal{W}_{s}^{\varepsilon}\left(  t\right)  +U_{s}\left(  \varepsilon
,t\right)  \right)  \left\{  1+\varepsilon\left(  \mathcal{W}_{s}%
^{\varepsilon}\left(  t\right)  +U_{s}\left(  \varepsilon,t\right)  \right)
\right\}
\end{gather}
and using $\left(  \text{\ref{ineg}}\right)  $ we get that:%
\begin{gather}
U_{j}\left(  \varepsilon,t\right)  \leq\sqrt{7}U_{j}\left(  \varepsilon
,0\right)  +2\varepsilon C\int_{0}^{t}U_{j}\left(  \varepsilon,\tau\right)
\left(  \varepsilon F_{s}^{\varepsilon}\left(  \tau\right)  +\mathcal{W}%
_{s}^{\varepsilon}\left(  \tau\right)  +U_{s}\left(  \varepsilon,\tau\right)
+\varepsilon\left(  \mathcal{W}_{s}^{\varepsilon}\left(  \tau\right)
+U_{s}\left(  \varepsilon,\tau\right)  \right)  ^{2}\right)  \nonumber\\
+2\varepsilon C2^{-js}\int_{0}^{t}c_{j}\left(  F_{s}^{\varepsilon}\left(
\tau\right)  \left(  1+\varepsilon\left(  \mathcal{W}_{s}^{\varepsilon}\left(
\tau\right)  +U_{s}\left(  \varepsilon,\tau\right)  \right)  \right)
+U_{s}\left(  \varepsilon,\tau\right)  \left(  \mathcal{W}_{s}^{\varepsilon
}\left(  \tau\right)  +U_{s}\left(  \varepsilon,\tau\right)  \right)  \left(
1+\varepsilon\left(  \mathcal{W}_{s}^{\varepsilon}\left(  \tau\right)
+U_{s}\left(  \varepsilon,\tau\right)  \right)  \right)  \right)
.\label{ineg2}%
\end{gather}
Multiplying $\left(  \text{\ref{ineg2}}\right)  $ with $2^{js}$ and performing
an $\ell^{r}\left(  \mathbb{Z}\right)  $-summation we end up with%
\begin{align}
U_{s}\left(  \varepsilon,t\right)   &  \leq\sqrt{7}U_{s}\left(  \varepsilon
,0\right)  +2\varepsilon C\int_{0}^{t}F_{s}^{\varepsilon}\left(  \tau\right)
\left(  1+\varepsilon\left(  \mathcal{W}_{s}^{\varepsilon}\left(  \tau\right)
+U_{s}\left(  \varepsilon,\tau\right)  \right)  \right)  d\tau\nonumber\\
&  \text{ \ \ \ \ \ \ \ \ \ \ \ \ \ \ }+4\varepsilon C\int_{0}^{t}U_{s}\left(
\varepsilon,\tau\right)  \left(  \varepsilon F_{s}^{\varepsilon}\left(
\tau\right)  +\mathcal{W}_{s}^{\varepsilon}\left(  \tau\right)  +U_{s}\left(
\varepsilon,\tau\right)  +\varepsilon\left(  \mathcal{W}_{s}^{\varepsilon
}\left(  \tau\right)  +U_{s}\left(  \varepsilon,\tau\right)  \right)
^{2}\right)  d\tau\nonumber\\
&  \leq\sqrt{7}R_{0}^{\varepsilon}+3\varepsilon CT_{\star}^{\varepsilon}%
\sup_{\tilde{\varepsilon}\in\left[  0,1\right]  }\sup_{\tau\in\left[
0,T^{\tilde{\varepsilon}}\right]  }F_{s}^{\tilde{\varepsilon}}\left(
\tau\right)  \nonumber\\
&  \text{ \ \ \ \ \ \ \ \ \ \ \ \ \ \ }+4\varepsilon C\int_{0}^{t}U_{s}\left(
\varepsilon,\tau\right)  \left(  \varepsilon F_{s}^{\varepsilon}\left(
\tau\right)  +\mathcal{W}_{s}^{\varepsilon}\left(  \tau\right)  +U_{s}\left(
\varepsilon,\tau\right)  +\varepsilon\left(  \mathcal{W}_{s}^{\varepsilon
}+U_{s}\left(  \varepsilon,\tau\right)  \right)  ^{2}\right)  d\tau
,\label{estimare1}%
\end{align}
owing to the fact that $\varepsilon$ has been chosen such that:%
\begin{equation}
\varepsilon\leq\varepsilon_{02}.\label{eps2}%
\end{equation}
Using the definition of $\tilde{C}$ we get that
\begin{equation}
T_{\star}^{\varepsilon}<\frac{R_{0}^{0}}{3\varepsilon eC\sup\limits_{\tilde
{\varepsilon}\in\left[  0,1\right]  }\sup\limits_{\tau\in\left[
0,T^{\tilde{\varepsilon}}\right]  }F_{s}^{\tilde{\varepsilon}}\left(
\tau\right)  }<\frac{R_{0}^{\varepsilon}}{3\varepsilon eC\sup\limits_{\tilde
{\varepsilon}\in\left[  0,1\right]  }\sup\limits_{\tau\in\left[
0,T^{\tilde{\varepsilon}}\right]  }F_{s}^{\tilde{\varepsilon}}\left(
\tau\right)  }\label{bound2}%
\end{equation}
and consequently, we get that:%
\begin{equation}
U_{s}\left(  \varepsilon,t\right)  \leq\left(  e^{-1}+\sqrt{7}\right)
R_{0}^{\varepsilon}+8\varepsilon C\left(  \left(  1+e\sqrt{7}\right)
R_{0}^{\varepsilon}+\sup\limits_{\tilde{\varepsilon}\in\left[  0,1\right]
}\sup\limits_{\tau\in\left[  0,T^{\tilde{\varepsilon}}\right]  }%
\mathcal{W}_{s}^{\tilde{\varepsilon}}\left(  \tau\right)  \right)  \int
_{0}^{t}U_{s}\left(  \varepsilon,\tau\right)  d\tau,\label{estimare2}%
\end{equation}
where we have used that $\varepsilon\leq\varepsilon_{03}$ respectively
$\varepsilon\leq\varepsilon_{04}$. The estimate $\left(  \text{\ref{estimare2}%
}\right)  $ along with Gronwall's Lemma imply that:%
\[
U_{s}\left(  \varepsilon,t\right)  \leq\left(  e^{-1}+\sqrt{7}\right)
R_{0}^{\varepsilon}\exp\left(  8\varepsilon CT_{\star}^{\varepsilon}\left(
\left(  1+e\sqrt{7}\right)  R_{0}^{\varepsilon}+\sup_{\tilde{\varepsilon}%
\in\left[  0,1\right]  }\sup_{\tau\in\left[  0,T^{\tilde{\varepsilon}}\right]
}\mathcal{W}_{s}^{\tilde{\varepsilon}}\left(  \tau\right)  \right)  \right)  .
\]
Owing to
\begin{align*}
T_{\star}^{\varepsilon} &  <\frac{1}{\varepsilon}\min\left\{  \frac
{1}{16C\left(  1+e\sqrt{7}\right)  R_{0}^{1}},\frac{1}{16C\sup\limits_{\tilde
{\varepsilon}\in\left[  0,1\right]  }\sup\limits_{\tau\in\left[
0,T^{\tilde{\varepsilon}}\right]  }\mathcal{W}_{s}^{\tilde{\varepsilon}%
}\left(  \tau\right)  }\right\}  \\
&  \leq\frac{1}{8\varepsilon C\left(  \left(  1+e\sqrt{7}\right)  R_{0}%
^{1}+\sup\limits_{\tilde{\varepsilon}\in\left[  0,1\right]  }\sup
\limits_{\tau\in\left[  0,T^{\tilde{\varepsilon}}\right]  }\mathcal{W}%
_{s}^{\tilde{\varepsilon}}\left(  \tau\right)  \right)  }%
\end{align*}
we obtain that
\[
\sup_{t\in\left[  0,T_{\star}^{\varepsilon}\right]  }U_{s}\left(
\varepsilon,t\right)  <\left(  1+e\sqrt{7}\right)  R_{0}^{\varepsilon}%
\]
which in view of the time continuity of $\left(  \eta^{\varepsilon
},V^{\varepsilon}\right)  $ is a contradiction. Thus, our initial suppositions
is false. Thus, if $\varepsilon\leq\varepsilon_{0}$ we get that
\[
T_{\star}^{\varepsilon}\geq\min\left\{  T^{\varepsilon},\frac{\tilde{C}%
}{\varepsilon}\right\}  .
\]
By the definition of $T_{\star}^{\varepsilon}$ we have that
\begin{equation}
\sup_{t\in\left[  0,T_{\star}^{\varepsilon}\right]  }\left\Vert \left(
\eta^{\varepsilon}\left(  t\right)  ,V^{\varepsilon}\left(  t\right)  \right)
\right\Vert _{X_{b,d,r}^{s,\varepsilon}}\leq\left(  1+e\sqrt{7}\right)
R_{0}^{\varepsilon}.\label{ineg3}%
\end{equation}
Observe that according to the uniform bounds of $\left(  \text{\ref{ineg3}%
}\right)  $ and relation $\left(  \text{\ref{derivatatemporala}}\right)  $ we
get that%
\[
\sup_{t\in\left[  0,T_{\star}^{\varepsilon}\right]  }\left\Vert \partial
_{t}\eta^{\varepsilon}\left(  t\right)  \right\Vert _{L^{\infty}}%
\leq2C^{^{\prime}}\left(  1+e\sqrt{7}\right)  R_{0}^{\varepsilon},
\]
where $C^{\prime}$ is the constant appearing in relation $\left(
\text{\ref{derivatatemporala}}\right)  $. This ends the proof of Theorem
\ref{Teorema1.1}.

\section{Proofs of the main results}

\subsection{The 1d case: proof of Theorem \ref{Teorema2}}

In the following lines we prove the 1-dimensional result announced inTheorem
\ref{Teorema2}. For the reader's convenience, let us write below the system:%

\begin{equation}
\left\{
\begin{array}
[c]{l}%
\left(  I-\varepsilon b\partial_{xx}^{2}\right)  \partial_{t}\bar{\eta
}+\partial_{x}\bar{u}+\varepsilon\partial_{x}\left(  \bar{\eta}\bar{u}\right)
=0,\\
\left(  I-\varepsilon d\partial_{xx}^{2}\right)  \partial_{t}\bar{u}%
+\partial_{x}\bar{\eta}+\varepsilon\bar{u}\partial_{x}\bar{u}=0,\\
\bar{\eta}_{|t=0}=\eta_{0},\text{ }\bar{u}_{|t=0}=u_{0}.
\end{array}
\right.  \label{BBM1d}%
\end{equation}
The strategy of the proof is the following. First, we split the initial data
into low-high frequency parts i.e. :%
\begin{align*}
\left(  \eta_{0},u_{0}\right)   &  =\left(  \Delta_{-1}\eta_{0},\Delta
_{-1}u_{0}\right)  +\left(  \left(  I-\Delta_{-1}\right)  \eta_{0},\left(
I-\Delta_{-1}\right)  u_{0}\right) \\
&  \overset{not.}{=}\left(  \eta_{0}^{low},u_{0}^{low}\right)  +\left(
\eta_{0}^{high},u_{0}^{high}\right)  .
\end{align*}
We solve the linear acoustic waves system with initial data $\left(  \eta
_{0}^{low},u_{0}^{low}\right)  $. Searching for a solution $\left(  \bar{\eta
},\bar{u}\right)  $ of $\left(  \text{\ref{BBM1d}}\right)  $ in the form
$\left(  \eta+\eta_{L},u+u_{L}\right)  $ we observe that in fact $\left(
\eta,u\right)  $ verifies a system of type $\left(  \text{\ref{BBM}}\right)
$. In view of the uniform bounds (with respect to time) of $\left(  \eta
_{L},u_{L}\right)  $ we obtain the existence of the pair $\left(
\eta,u\right)  $ as a consequence of Corollary \ref{Corolar}. Finally, we
prove the uniqueness property by performing classical $L^{2}$-energy estimates
with respect to the system of equations governing the difference of two
solutions with the same initial data.

Owing to Bernstein's Lemma $\left(  \text{\ref{Bernstein}}\right)  $ we have:%
\begin{equation}
\left\Vert \left(  \eta_{0}^{low},u_{0}^{low}\right)  \right\Vert _{L^{\infty
}}\leq C_{1}\left\Vert \left(  \eta_{0},u_{0}\right)  \right\Vert _{L^{\infty
}}.\label{low_1d}%
\end{equation}
Moreover, it transpires that $\left(  \eta_{0}^{high},u_{0}^{high}\right)  \in
X_{b,d,r}^{s}$ and that%
\begin{equation}
\left\Vert \left(  \eta_{0}^{high},u_{0}^{high}\right)  \right\Vert
_{X_{b,d,r}^{s,\varepsilon}}\leq C_{2}\left\Vert \left(  \partial_{x}\eta
_{0},\partial_{x}u_{0}\right)  \right\Vert _{X_{b,d,r}^{s-1,\varepsilon}%
}.\label{high_1d}%
\end{equation}
Let us consider $\left(  \eta_{L},u_{L}\right)  \in\mathcal{C}\left(
[0,\infty),E_{b,d,r}^{s}\left(  \mathbb{R}\right)  \right)  $ the unique
solution of the linear acoustic waves system:
\begin{equation}
\left\{
\begin{array}
[c]{l}%
\partial_{t}\eta_{L}+\partial_{x}u_{L}=0,\\
\partial_{t}u_{L}+\partial_{x}\eta_{L}=0,\\
\eta_{L|t=0}=\eta_{0}^{low}\text{ },\text{ }u_{L|t=0}=u_{0}^{low}%
\end{array}
\right.  \label{sistem_unde1d}%
\end{equation}
which is given explicitly by the following relation:
\begin{equation}
\left\{
\begin{array}
[c]{c}%
2\eta_{L}\left(  t,x\right)  =\left(  \eta_{0}^{low}\left(  x+t\right)
+\eta_{0}^{low}\left(  x-t\right)  \right)  +\left(  u_{0}^{low}\left(
x+t\right)  -u_{0}^{low}\left(  x-t\right)  \right)  ,\\
2u_{L}\left(  t,x\right)  =\left(  \eta_{0}^{low}\left(  x+t\right)  -\eta
_{0}^{low}\left(  x-t\right)  \right)  +\left(  u_{0}^{low}\left(  x+t\right)
+u_{0}^{low}\left(  x-t\right)  \right)  ,
\end{array}
\right.  \label{unde_explicit}%
\end{equation}
for all $\left(  t,x\right)  \in\mathbb{R}^{+}\mathbb{\times R}$. From
$\left(  \text{\ref{unde_explicit}}\right)  $ we conclude that:%
\begin{align}
\left\Vert \left(  \eta_{L}\left(  t\right)  ,u_{L}\left(  t\right)  \right)
\right\Vert _{L^{\infty}} &  \leq2C_{1}\left\Vert \left(  \eta_{0}%
,u_{0}\right)  \right\Vert _{L^{\infty}},\nonumber\\
\left\Vert \left(  \partial_{x}\eta_{L}\left(  t\right)  ,\partial_{x}%
u_{L}\left(  t\right)  \right)  \right\Vert _{B_{2,r}^{\sigma}} &  \leq
2C_{1}\left\Vert \left(  \partial_{x}\eta_{0},\partial_{x}u_{0}\right)
\right\Vert _{B_{2,\infty}^{0}}\label{conc2}%
\end{align}
for all $\sigma\geq0$. In particular, we also have%
\begin{equation}
\left\Vert \left(  \eta_{L}\left(  t\right)  ,u_{L}\left(  t\right)  \right)
\right\Vert _{E_{b,d,r}^{s,\varepsilon}}\leq2C_{1}\left\Vert \left(  \eta
_{0},u_{0}\right)  \right\Vert _{E_{b,d}^{s,\varepsilon}}\label{normaliniara}%
\end{equation}
for all $t\geq0$. Let us consider$:$%
\[
\left\{
\begin{array}
[c]{l}%
f_{L}=-\eta_{L}\partial_{x}u_{L}-u_{L}\partial_{x}\eta_{L}-b\partial_{xxx}%
^{3}u_{L},\\
g_{L}=-u_{L}\partial_{x}u_{L}-d\partial_{xxx}^{3}\eta_{L}.
\end{array}
\right.
\]
Owing to the fact that $\left(  \widehat{\eta_{L}},\widehat{u_{L}}\right)  $
is supported in a ball centered at the origin, we obtain that
\begin{align}
\left\Vert \left(  f_{L}\left(  t\right)  ,g_{L}\left(  t\right)  \right)
\right\Vert _{B_{2,r}^{s}} &  \leq C_{s}\left\{  \left\Vert \left(  \eta
_{L},u_{L}\right)  \right\Vert _{L^{\infty}}\left\Vert \left(  \partial
_{x}\eta_{L},\partial_{x}u_{L}\right)  \right\Vert _{B_{2,r}^{s}}+\left\Vert
\left(  \partial_{x}\eta_{L},\partial_{x}u_{L}\right)  \right\Vert
_{L^{\infty}}\left\Vert \left(  \partial_{x}\eta_{L},\partial_{x}u_{L}\right)
\right\Vert _{B_{2,r}^{s-1}}\right.  \nonumber\\
&  \text{ \ \ \ \ \ }\left.  \text{\ }+b\left\Vert \partial_{xxx}^{3}%
u_{L}\right\Vert _{B_{2,r}^{s}}+d\left\Vert \partial_{xxx}^{3}\eta
_{L}\right\Vert _{B_{2,r}^{s}}\right\}  \nonumber\\
&  \leq C_{s}\left\Vert \left(  \partial_{x}\eta_{0},\partial_{x}u_{0}\right)
\right\Vert _{B_{2,\infty}^{0}}\left(  \left\Vert \left(  \eta_{0}%
,u_{0}\right)  \right\Vert _{L^{\infty}}+\left\Vert \left(  \partial_{x}%
\eta_{0},\partial_{x}u_{0}\right)  \right\Vert _{B_{2,\infty}^{0}}+b+d\right)
.\label{lin1.1}%
\end{align}
Owing to the uniform bounds of $\left(  \eta_{L},u_{L},f_{L},g_{L}\right)  $
announced in $\left(  \text{\ref{conc2}}\right)  $ and in $\left(
\text{\ref{lin1.1}}\right)  $ we can apply Corollary \ref{Corolar} with
$p_{1}=2$, $p_{2}=\infty$ in order to obtain the existence of two positive
reals $\varepsilon_{0},C$ depending on $s,b,d$ and the norm of the initial
data such that for any $\varepsilon\leq\varepsilon_{0}$ we may consider
$\left(  \eta^{\varepsilon},u^{\varepsilon}\right)  \in\mathcal{C}\left(
[0,\frac{C}{\varepsilon}],X_{b,d,r}^{s}\right)  $ the solution of%
\begin{equation}
\left\{
\begin{array}
[c]{l}%
\left(  I-\varepsilon b\partial_{xx}^{2}\right)  \partial_{t}\eta+\partial
_{x}u+\varepsilon\partial_{x}\left(  \eta u_{L}+\eta_{L}u+\eta u\right)
=\varepsilon f_{L},\\
\left(  I-\varepsilon d\partial_{xx}^{2}\right)  \partial_{t}u+\partial
_{x}\eta+\varepsilon\left(  u+u_{L}\right)  \partial_{x}u+u\partial_{x}%
u_{L}=\varepsilon g_{L},\\
\eta_{|t=0}=\eta_{0}^{high},\text{ }u_{|t=0}=u_{0}^{high}%
\end{array}
\right.  \label{BBM2}%
\end{equation}
which satisfies
\begin{align}
\sup_{t\in\left[  0,\frac{C}{\varepsilon}\right]  }\left(  \left\Vert \left(
\eta^{\varepsilon}\left(  t\right)  ,u^{\varepsilon}\left(  t\right)  \right)
\right\Vert _{X_{b,d,r}^{s,\varepsilon}}+\left\Vert \partial_{t}%
\eta^{\varepsilon}\left(  t\right)  \right\Vert _{L^{\infty}}\right)   &
\leq\tilde{C}\left\Vert \left(  \eta_{0}^{high},u_{0}^{high}\right)
\right\Vert _{X_{b,d,r}^{s,\varepsilon}}\nonumber\\
&  \leq\tilde{C}C_{2}\left\Vert \left(  \partial_{x}\eta_{0},\partial_{x}%
u_{0}\right)  \right\Vert _{X_{b,d,r}^{s-1,\varepsilon}},\label{conc1}%
\end{align}
for some numerical constant $\tilde{C}$. Considering
\[
\bar{\eta}^{\varepsilon}=\eta^{\varepsilon}+\eta_{L},\text{ }\bar
{u}^{\varepsilon}=u^{\varepsilon}+u_{L}%
\]
we see that $\left(  \bar{\eta}^{\varepsilon},\bar{u}^{\varepsilon}\right)
\in$ $\mathcal{C}\left(  [0,\infty),E_{b,d,r}^{s}\left(  \mathbb{R}\right)
\right)  $ and for all $t\in\lbrack0,\frac{C}{\varepsilon}]$ we have that:
\begin{align*}
&  \left\Vert \left(  \bar{\eta}^{\varepsilon}\left(  t\right)  ,\bar
{u}\left(  t\right)  \right)  \right\Vert _{E_{b,d}^{s,\varepsilon}%
}+\left\Vert \partial_{t}\bar{\eta}^{\varepsilon}\left(  t\right)  \right\Vert
_{L^{\infty}}\\
&  \leq\left\Vert \left(  \eta^{\varepsilon}\left(  t\right)  ,u^{\varepsilon
}\left(  t\right)  \right)  \right\Vert _{E_{b,d,r}^{s,\varepsilon}%
}+\left\Vert \partial_{t}\eta^{\varepsilon}\left(  t\right)  \right\Vert
_{L^{\infty}}+\left\Vert \left(  \eta_{L}\left(  t\right)  ,u_{L}\left(
t\right)  \right)  \right\Vert _{E_{b,d,r}^{s,\varepsilon}}+\left\Vert
\partial_{t}\eta_{L}\left(  t\right)  \right\Vert _{L^{\infty}}\\
&  \leq2C_{1}\left\Vert \left(  \eta_{0},u_{0}\right)  \right\Vert
_{E_{b,d,r}^{s,\varepsilon}}+\left\Vert \partial_{x}u_{L}\left(  t\right)
\right\Vert _{L^{\infty}}+\tilde{C}C_{2}\left\Vert \left(  \partial_{x}%
\eta_{0},\partial_{x}u_{0}\right)  \right\Vert _{X_{b,d,r}^{s-1,\varepsilon}%
}\\
&  \leq C_{3}\left\Vert \left(  \eta_{0},u_{0}\right)  \right\Vert
_{E_{b,d,r}^{s,\varepsilon}},
\end{align*}
where we used $\left(  \text{\ref{conc2}}\right)  $, $\left(
\text{\ref{normaliniara}}\right)  $\ and $\left(  \text{\ref{conc1}}\right)  $.

Proving the uniqueness of the solution is done in the following lines. Let us
suppose that $\left(  \bar{\eta}^{1},\bar{u}^{1}\right)  $, $\left(  \bar
{\eta}^{2},\bar{u}^{2}\right)  \in$ $\mathcal{C}\left(  \left[  0,T\right]
,E_{b,d,r}^{s}\right)  $ are two solutions of $\left(  \text{\ref{BBM1d}%
}\right)  $. Then, we observe that:%
\begin{equation}
\left\{
\begin{array}
[c]{l}%
\left(  I-\varepsilon b\partial_{xx}^{2}\right)  \partial_{t}\delta\bar{\eta
}+\partial_{x}\delta\bar{u}+\varepsilon\partial_{x}\left(  \bar{u}^{1}%
\delta\bar{\eta}+\bar{\eta}^{2}\delta\bar{u}\right)  =0,\\
\left(  I-\varepsilon d\partial_{xx}^{2}\right)  \partial_{t}\delta\bar
{u}+\partial_{x}\delta\bar{\eta}+\varepsilon\bar{u}^{1}\partial_{x}\delta
\bar{u}+\varepsilon\delta\bar{u}\partial_{x}\bar{u}^{2}=0,\\
\delta\bar{\eta}_{|t=0}=0,\text{ }\delta\bar{u}_{|t=0}=0.
\end{array}
\right.  \label{delta1}%
\end{equation}
Writing the first equation of $\left(  \text{\ref{delta1}}\right)  $ in
integral form we get that
\[
\delta\bar{\eta}\left(  t\right)  =\int_{0}^{t}\left(  I-\varepsilon
b\partial_{xx}^{2}\right)  ^{-1}\left(  \partial_{x}\delta\bar{u}+\delta
\bar{\eta}\partial_{x}\bar{u}^{1}+\bar{u}^{1}\partial_{x}\delta\bar{\eta}%
+\bar{\eta}^{2}\partial_{x}\delta\bar{u}+\delta\bar{u}\partial_{x}\bar{\eta
}^{2}\right)  d\tau
\]
and because%
\[
\partial_{x}\delta\bar{u}+\delta\bar{\eta}\partial_{x}\bar{u}^{1}+\bar{u}%
^{1}\partial_{x}\delta\bar{\eta}+\bar{\eta}^{2}\partial_{x}\delta\bar
{u}+\delta\bar{u}\partial_{x}\bar{\eta}^{2}\in L^{2}\left(  \mathbb{R}\right)
\]
we get that
\[
\delta\bar{\eta}\in H^{2\operatorname*{sgn}\left(  b\right)  }\left(
\mathbb{R}\right)  .
\]
Similarly%
\[
\delta\bar{u}\in H^{2\operatorname*{sgn}\left(  d\right)  }\left(
\mathbb{R}\right)  .
\]
Thus, multiplying the first equation of $\left(  \text{\ref{delta1}}\right)  $
with $\delta\bar{\eta}$ $\ $and the second one with $\delta\bar{u}$ and using
Gronwall's lemma will lead us to the conclusion $\left(  \delta\bar{\eta
},\delta\bar{u}\right)  =\left(  0,0\right)  $ on $\left[  0,T\right]  $. This
ends the proof of Theorem \ref{Teorema2}.

\subsection{The 2d case: proof of Theorem \ref{Teorema3}}

For the reader's convenience, let us rewrite below the problem that we wish to solve:%

\begin{equation}
\left\{
\begin{array}
[c]{l}%
\left(  I-\varepsilon b\Delta\right)  \partial_{t}\bar{\eta}%
+\operatorname{div}\bar{V}+\varepsilon\operatorname{div}\left(  \bar{\eta}%
\bar{V}\right)  =0,\\
\left(  I-\varepsilon d\Delta\right)  \partial_{t}\bar{V}+\nabla\bar{\eta
}+\varepsilon\bar{V}\cdot\nabla\bar{V}=0,\\
\bar{\eta}_{|t=0}\left(  x,y\right)  =\eta_{0}\left(  x\right)  +\phi\left(
x,y\right)  ,\\
\bar{V}_{|t=0}\left(  x,y\right)  =\left(  u_{0}\left(  x\right)  ,0\right)
+\psi\left(  x,y\right)  .
\end{array}
\right.  \label{BBM2d}%
\end{equation}

According to Theorem \ref{Teorema2}, there exist $\varepsilon_{0},C^{1D}>0$,
depending on $\sigma,b,d$ and the norm of the initial data $\left(  \eta
_{0},u_{0}\right)  $ such that for any $\varepsilon\leq\varepsilon_{0}$ we may
consider $\left(  \eta^{\varepsilon,1D},u^{\varepsilon,1D}\right)
\in\mathcal{C}\left(  [0,\frac{C^{1D}}{\varepsilon}],E_{b,d,r}^{\sigma}\left(
\mathbb{R}\right)  \right)  $ the unique solution of the $1$-dimensional
system $\left(  \text{\ref{BBM1d}}\right)  $ with initial data $\left(
\eta_{0},u_{0}\right)  $ and
\begin{equation}
\sup_{t\in\lbrack0,\frac{C^{1D}}{\varepsilon}]}\left\Vert \left(
\eta^{\varepsilon,1D}\left(  t\right)  ,u^{\varepsilon,1D}\left(  t\right)
\right)  \right\Vert _{E_{b,d,r}^{\sigma,\varepsilon}}+\sup_{t\in
\lbrack0,\frac{C^{1D}}{\varepsilon}]}\left\Vert \partial_{t}\eta
^{\varepsilon,1D}\left(  t\right)  \right\Vert _{L^{\infty}}\leq\tilde{C}%
_{1}\left\Vert \left(  \eta_{0},u_{0}\right)  \right\Vert _{E_{b,d,r}%
^{\sigma,\varepsilon}},\label{ineg21}%
\end{equation}
with $\tilde{C}_{1}$ some numerical constant. Let us observe that if we
consider
\[
\left\{
\begin{array}
[c]{c}%
\eta_{1}^{\varepsilon}\left(  t,x,y\right)  =\eta^{1D}\left(  t,x\right)  ,\\
V_{1}^{\varepsilon}\left(  t,x,y\right)  =\left(  u^{1D}\left(  t,x\right)
,0\right)  ,
\end{array}
\right.
\]
for all $\left(  t,x,y\right)  \in\lbrack0,\frac{C^{1D}}{\varepsilon}%
]\times\mathbb{R}^{2}$, then:
\[
\left\{
\begin{array}
[c]{l}%
\left(  I-\varepsilon b\Delta\right)  \partial_{t}\eta_{1}^{\varepsilon
}+\operatorname{div}V_{1}^{\varepsilon}+\varepsilon\operatorname{div}\left(
\eta_{1}^{\varepsilon}V_{1}^{\varepsilon}\right)  =0,\\
\left(  I-\varepsilon d\Delta\right)  \partial_{t}V_{1}^{\varepsilon}%
+\nabla\eta_{1}^{\varepsilon}+\varepsilon V_{1}^{\varepsilon}\cdot\nabla
V_{1}^{\varepsilon}=0,\\
\eta_{1|t=0}^{\varepsilon}\left(  x,y\right)  =\eta_{0}\left(  x\right)
,\text{ }V_{1|t=0}^{\varepsilon}=(u_{0}\left(  x\right)  ,0).
\end{array}
\right.
\]
Observe that we have $\partial_{x}\eta^{\varepsilon,1D}\left(  t\right)  \in
B_{2,r}^{\sigma-1}\left(  \mathbb{R}\right)  \hookrightarrow B_{\infty,\infty
}^{\sigma-\frac{3}{2}}\left(  \mathbb{R}\right)  $. Let us observe that for
all $\left(  x_{1},y_{1}\right)  $,$\left(  x_{2},y_{2}\right)  \in
\mathbb{R}^{2}$ such that
\[
\left\vert x_{1}-x_{2}\right\vert ^{2}+\left\vert y_{1}-y_{2}\right\vert
^{2}\leq1
\]
the following holds true%
\begin{align*}
\left\vert \nabla\eta_{1}^{\varepsilon}\left(  t,x_{1},y_{1}\right)
-\nabla\eta_{1}^{\varepsilon}\left(  t,x_{2},y_{2}\right)  \right\vert  &
=\left\vert \partial_{x}\eta^{\varepsilon,1D}\left(  t,x_{1}\right)
-\partial_{x}\eta^{\varepsilon,1D}\left(  t,x_{2}\right)  \right\vert \\
&  \leq C\left\Vert \partial_{x}\eta^{\varepsilon,1D}\left(  t\right)
\right\Vert _{B_{\infty,\infty}^{\sigma-\frac{3}{2}}\left(  \mathbb{R}\right)
}\left\vert x_{1}-x_{2}\right\vert ^{\left(  \sigma-\frac{3}{2}\right)
-\left[  \sigma-\frac{3}{2}\right]  }.
\end{align*}
Using the fact that $B_{\infty,\infty}^{\sigma-\frac{3}{2}}\left(
\mathbb{R}^{2}\right)  =\mathcal{C}^{\sigma-\frac{3}{2},\sigma-\frac{3}%
{2}-\left[  \sigma-\frac{3}{2}\right]  }\left(  \mathbb{R}^{2}\right)  $, see
Remark \ref{observatieH} and relation $\left(  \text{\ref{sigma}}\right)  $,
we get that $\nabla\eta_{1}^{\varepsilon}\left(  t\right)  =\left(
\partial_{x}\eta^{\varepsilon,1D}\left(  t\right)  ,0\right)  \in B_{\infty
,r}^{s}\left(  \mathbb{R}^{2}\right)  $ with
\begin{equation}
\left\Vert \nabla\eta_{1}^{\varepsilon}\left(  t\right)  \right\Vert
_{B_{\infty,r}^{s}\left(  \mathbb{R}^{2}\right)  }\leq C\left\Vert
\partial_{x}\eta^{\varepsilon,1D}\left(  t\right)  \right\Vert _{B_{2,r}%
^{\sigma-1}\left(  \mathbb{R}\right)  }.\label{ineg2.2}%
\end{equation}
Similarly, we get:%
\begin{equation}
\left\Vert \nabla V_{1}^{\varepsilon}\right\Vert _{B_{\infty,r}^{s}\left(
\mathbb{R}^{2}\right)  }\leq C\left\Vert \partial_{x}u^{\varepsilon
,1D}\right\Vert _{B_{2,r}^{\sigma-1}\left(  \mathbb{R}\right)  }%
.\label{ineg2.3}%
\end{equation}
It is also clear that:
\begin{equation}
\left\Vert \left(  \eta_{1}^{\varepsilon},V_{1}^{\varepsilon}\right)
\right\Vert _{L^{\infty}\left(  \mathbb{R}^{2}\right)  }=\left\Vert \left(
\eta^{\varepsilon,1D},u^{\varepsilon,1D}\right)  \right\Vert _{L^{\infty
}\left(  \mathbb{R}\right)  }\text{ and }\left\Vert \partial_{t}\eta
_{1}^{\varepsilon}\right\Vert _{L^{\infty}\left(  \mathbb{R}^{2}\right)
}=\left\Vert \partial_{t}\eta^{\varepsilon,1D}\right\Vert _{L^{\infty}\left(
\mathbb{R}\right)  },\label{ineg2.4}%
\end{equation}
and thus, using $\left(  \text{\ref{ineg2.2}}\right)  $, $\left(
\text{\ref{ineg2.3}}\right)  $, $\left(  \text{\ref{ineg2.4}}\right)  $
respectively the uniform bounds of $\left(  \text{\ref{ineg21}}\right)  $, we
get that:%
\begin{gather}
\sup_{\varepsilon\in\lbrack0,\varepsilon_{0}]}\sup_{t\in\lbrack0,\frac{C_{1D}%
}{\varepsilon}]}\left(  \left\Vert \left(  \eta_{1}^{\varepsilon}\left(
t\right)  ,V_{1}^{\varepsilon}\left(  t\right)  \right)  \right\Vert
_{L^{\infty}\left(  \mathbb{R}^{2}\right)  }+\left\Vert \left(  \nabla\eta
_{1}^{\varepsilon}\left(  t\right)  ,\nabla V_{1}^{\varepsilon}\left(
t\right)  \right)  \right\Vert _{B_{\infty,r}^{s}\left(  \mathbb{R}%
^{2}\right)  }\right)  +\sup_{t\in\lbrack0,\frac{C_{1D}}{\varepsilon}%
]}\left\Vert \partial_{t}\eta_{1}^{\varepsilon}\left(  t\right)  \right\Vert
_{L^{\infty}\left(  \mathbb{R}^{2}\right)  }\nonumber\\
\leq\tilde{C}_{2}\left\Vert \left(  \eta_{0},u_{0}\right)  \right\Vert
_{E_{b,d,r}^{s,1}}.\label{uniformT2}%
\end{gather}
Let us consider the system:%
\[
\left\{
\begin{array}
[c]{l}%
\left(  I-\varepsilon b\Delta\right)  \partial_{t}\eta+\operatorname{div}%
V+\varepsilon\operatorname{div}\left(  \eta V_{1}^{\varepsilon}+\eta
_{1}^{\varepsilon}V+\eta V\right)  =0,\\
\left(  I-\varepsilon d\Delta\right)  \partial_{t}V+\nabla\eta+\varepsilon
\left(  V_{1}^{\varepsilon}+V\right)  \cdot\nabla V+\varepsilon V\cdot\nabla
V_{1}^{\varepsilon}=0,\\
\eta_{|t=0}=\phi,\text{ }V_{|t=0}=\psi.
\end{array}
\right.
\]
Using the uniform bounds of $\left(  \eta_{1}^{\varepsilon},V_{1}%
^{\varepsilon}\right)  _{\varepsilon\leq\varepsilon_{0}}$ and owing to Theorem
\ref{Teorema1.1}, there exist two real numbers $C^{2D}\leq C^{1D}$,
$\varepsilon_{1}\leq\varepsilon_{0}$ depending on $s,\sigma,b,d$ and on
$\left\Vert \left(  \eta_{0},u_{0}\right)  \right\Vert _{E_{b,d,r}^{s,1}%
}+\left\Vert \left(  \phi,\psi\right)  \right\Vert _{X_{b,d,r}^{s,1}}$ such
that for any $\varepsilon\leq\varepsilon_{1}$ we can uniquely construct
$\left(  \eta^{\varepsilon,2D},V^{\varepsilon,2D}\right)  \in\mathcal{C}%
\left(  [0,\frac{C^{2D}}{\varepsilon}],X_{b,d,r}^{s}\right)  $ and, moreover,
\begin{equation}
\sup_{t\in\left[  0,\frac{C^{2D}}{\varepsilon}\right]  }\left\Vert \left(
\eta^{\varepsilon,2D}\left(  t\right)  ,V^{\varepsilon,2D}\left(  t\right)
\right)  \right\Vert _{X_{b,d,r}^{s,\varepsilon}}\leq\tilde{C}_{3}\left(
\left\Vert \left(  \eta_{0},u_{0}\right)  \right\Vert _{E_{b,d,r}%
^{s,\varepsilon}}+\left\Vert \left(  \phi,\psi\right)  \right\Vert
_{X_{b,d,r}^{s,\varepsilon}}\right)  \label{ineg22}%
\end{equation}
where $\tilde{C}_{3}$ is numerical constant. Then, we see that defining%
\[
\left\{
\begin{array}
[c]{l}%
\bar{\eta}^{\varepsilon}\left(  t,x,y\right)  =\eta^{\varepsilon,1D}\left(
t,x\right)  +\eta^{\varepsilon,2D}\left(  t,x,y\right)  ,\\
\bar{V}^{\varepsilon}\left(  t,x,y\right)  =\left(  u^{\varepsilon,1D}\left(
t,x\right)  ,0\right)  +V^{\varepsilon,2D}\left(  t,x,y\right)
\end{array}
\right.
\]
for all $\left(  t,x,y\right)  \in\lbrack0,\frac{C^{2D}}{\varepsilon}%
]\times\mathbb{R}^{2}$ is a solution of $\left(  \text{\ref{weaklydispersive1}%
}\right)  $ which, by construction, belongs to $M_{b,d,r}^{\sigma,s}$.
Moreover, owing to $\left(  \text{\ref{ineg21}}\right)  $ and $\left(
\text{\ref{ineg22}}\right)  $ there exists a constant $\tilde{C}_{4}$ such
that:
\[
\left\Vert \left(  \bar{\eta}^{\varepsilon},\bar{V}^{\varepsilon}\right)
\right\Vert _{M_{b,d,r}^{\sigma,s,\varepsilon}}\leq\tilde{C}_{4}\left(
\left\Vert \left(  \eta_{0},u_{0}\right)  \right\Vert _{E_{b,d,r}%
^{s,\varepsilon}}+\left\Vert \left(  \phi,\psi\right)  \right\Vert
_{X_{b,d,r}^{s,\varepsilon}}\right)  .
\]
We proceed by proving the uniqueness property. Consider two solutions
$\left(  \eta^{i}+\phi^{i},\left(  u^{i}+\psi_{1}^{i},\psi_{2}^{i}\right)
\right)  \in\mathcal{C}\left(  \left[  0,T\right]  ;M_{b,d,r}^{\sigma
,s}\right)  $ with the same initial data and we introduce the following
notations:%
\[
\left\{
\begin{array}
[c]{c}%
\delta\eta=\eta^{1}-\eta^{2},\delta u=u^{1}-u^{2},\\
\delta\phi=\phi^{1}-\phi^{2},\delta\psi=\psi^{1}-\psi^{2}.
\end{array}
\right.
\]
For any pair $\rho\in\mathcal{C}\left(  \left[  0,T\right]  ;\mathcal{S}%
\left(  \mathbb{R}^{2}\right)  \right)  $ we get that:%
\begin{gather}
\int_{0}^{t}\int_{\mathbb{R}^{2}}\left(  \delta\eta+\delta\phi\right)  \left(
I-\varepsilon b\Delta\right)  \partial_{t}\rho+\int_{0}^{t}\int_{\mathbb{R}%
^{2}}\left(  \varepsilon\left(  \delta\eta+\delta\phi\right)  \left(
u^{1}+\psi_{1}^{1}\right)  +\left(  1+\varepsilon\left(  \eta^{2}+\phi
^{2}\right)  \right)  \left(  \delta u+\delta\psi_{1}\right)  \right)
\partial_{x}\rho\nonumber\\
+\int_{0}^{t}\int_{\mathbb{R}^{2}}\left(  \varepsilon\left(  \delta\eta
+\delta\phi\right)  \psi_{2}^{1}+\left(  1+\varepsilon\left(  \eta^{2}%
+\phi^{2}\right)  \right)  \delta\psi_{2}\right)  \partial_{y}\rho
-\int_{\mathbb{R}^{2}}\left(  \delta\eta\left(  t\right)  +\delta\phi\left(
t\right)  \right)  \left(  I-\varepsilon b\Delta\right)  \rho\left(  t\right)
=0.\label{limita}%
\end{gather}
Taking for any $m\in\mathbb{N}^{\star}$
\[
\rho^{m}\left(  t,x,y\right)  =\frac{1}{m}\tilde{\rho}\left(  t,x\right)
\chi\left(  \frac{y}{m}\right)
\]
with $\tilde{\rho}\in\mathcal{C}\left(  \left[  0,T\right]  ;\mathcal{S}%
\left(  \mathbb{R}\right)  \right)  $ and $\chi\in$ $\mathcal{S}\left(
\mathbb{R}\right)  $ we find that%
\begin{gather*}
\int_{0}^{t}\int_{\mathbb{R}}\delta\eta\left(  I-\varepsilon b\partial
_{xx}^{2}\right)  \partial_{t}\tilde{\rho}+\int_{0}^{t}\int_{\mathbb{R}%
}\left(  \varepsilon\delta\eta u^{1}+\left(  1+\varepsilon\eta^{2}\right)
\delta u\right)  \partial_{x}\tilde{\rho}\\
-\int_{\mathbb{R}}\delta\eta\left(  t\right)  \left(  I-\varepsilon
b\partial_{xx}^{2}\right)  \tilde{\rho}\left(  t\right)  =o\left(  m\right)
\text{ when }m\rightarrow\infty.
\end{gather*}
Proceeding similarly we get that for any $\mu\in\mathcal{C}\left(  \left[
0,T\right]  ;\mathcal{S}\left(  \mathbb{R}\right)  \right)  $%
\begin{gather*}
\int_{0}^{t}\int_{\mathbb{R}}\delta u\left(  I-\varepsilon d\partial_{xx}%
^{2}\right)  \partial_{t}\mu+\int_{0}^{t}\int_{\mathbb{R}}\left(  \delta
\eta+\varepsilon u^{1}\delta u\right)  \partial_{x}\mu+\int_{0}^{t}%
\int_{\mathbb{R}}\varepsilon\delta u\left(  \partial_{x}u^{1}-\partial
_{x}u^{2}\right)  \mu\\
-\int_{\mathbb{R}}\delta u\left(  t\right)  \left(  I-\varepsilon
d\partial_{xx}^{2}\right)  \mu\left(  t\right)  =o\left(  m\right)  \text{
when }m\rightarrow\infty.
\end{gather*}
We thus recover that the pair $\left(  \delta\eta,\delta u\right)  \in$
$\mathcal{C}\left(  \left[  0,T\right]  ;E_{b,d,r}^{\sigma}\left(
\mathbb{R}\right)  \right)  $ is a solution of
\begin{equation}
\left\{
\begin{array}
[c]{l}%
\left(  I-\varepsilon b\partial_{xx}\right)  \partial_{t}\delta\eta
+\partial_{x}\delta u+\varepsilon\partial_{x}\left(  \delta\eta u^{1}+\eta
^{2}\delta u\right)  =0,\\
\left(  I-\varepsilon d\partial_{xx}\right)  \partial_{t}\delta u+\partial
_{x}\eta+\varepsilon u^{1}\partial_{x}\delta u+\varepsilon\delta u\partial
_{x}u^{2}=0,\\
\delta\bar{\eta}_{|t=0}=0,\text{ }\delta\bar{u}_{|t=0}=0.
\end{array}
\right.  \label{diff}%
\end{equation}
According to Theorem \ref{Teorema2} this implies that $\left(  \delta
\eta,\delta u\right)  =\left(  0,0\right)  $. It follows that
\begin{equation}
\left\{
\begin{array}
[c]{l}%
\left(  I-\varepsilon b\Delta\right)  \partial_{t}\delta\phi
+\operatorname{div}\delta\psi+\varepsilon\operatorname{div}\left(  \delta
\phi\psi^{1}+\phi^{2}\delta\psi\right)  =0,\\
\left(  I-\varepsilon d\Delta\right)  \partial_{t}\delta\psi+\nabla\delta
\phi+\varepsilon\psi^{1}\cdot\nabla\delta\psi+\varepsilon\delta\psi\cdot
\nabla\psi^{2}=0,\\
\delta\phi_{|t=0}=0,\text{ }\delta\psi_{|t=0}=0,
\end{array}
\right.  \label{diff2}%
\end{equation}
which by Theorem \ref{Teorema1} implies that $\left(  \delta\phi,\text{
}\delta\psi\right)  =\left(  0,0\right)  $. Thus we obtain that the two
solutions coincide.

\begin{remark}
In the case when $b>0$ and $d>0$ a stronger result holds in the sense that one
may prove stability estimates in $L^{\infty}\left(  \mathbb{R}^{2}\right)
\times\left(  L^{\infty}\left(  \mathbb{R}^{2}\right)  \right)  ^{2}$. This is
done by observing that for $l=1,2$ we have:
\[
\mathcal{F}^{-1}\left(  \frac{\xi_{l}}{1+\left\vert \xi\right\vert ^{2}%
}\right)  \in L^{1}\left(  \mathbb{R}^{2}\right)
\]
see for instance \cite{Bona3}, page $606$.
\end{remark}

\section{Appendix: Littlewood-Paley theory}

We present here a few results of Fourier analysis used through the text. The
full proofs along with other complementary results can be found in
\cite{Dan1}. In the following if $\Omega\subset\mathbb{R}^{n}$ is a domain
then $\mathcal{D}\left(  \Omega\right)  $ will denote the set of smooth
functions on $\Omega$ with compact support and $\mathcal{S}$ will denote the
Schwartz class of functions defined on $\mathbb{R}^{n}$. Also, we consider
$\mathcal{S}^{\prime}$ the set of tempered distributions on $\mathbb{R}^{n}$.

\subsection{The dyadic partition of unity}

Let us begin by recalling the so called Bernstein lemma:

\begin{lemma}
\label{Bernstein}Let $\mathcal{C}$ be a given annulus and $B$ a ball of
$\mathbb{R}^{n}$. Let us also consider any nonnegative integer $k$, a couple
$p,q\in\left[  1,\infty\right]  ^{2}$ with $p\leq q$ and any functions $u,v\in
L^{p}$ such that $\mathrm{Supp}(\hat{u})\subset\lambda B$ and $\mathrm{Supp}%
(\hat{v})\subset\lambda\mathcal{C}$. Then, there exists a constant $C$ such
that the following inequalities hold true:%
\begin{equation}
\sup_{\left\vert \alpha\right\vert =k}\left\Vert \partial^{\alpha}u\right\Vert
_{L^{q}}\leq C^{k+1}\lambda^{k+n\left(  \frac{1}{p}-\frac{1}{q}\right)
}\left\Vert u\right\Vert _{L^{p}}, \label{Bernstein1}%
\end{equation}
respectively
\begin{equation}
C^{-k-1}\lambda^{k}\left\Vert v\right\Vert _{L^{p}}\leq\sup_{\left\vert
\alpha\right\vert =k}\left\Vert \partial^{\alpha}v\right\Vert _{L^{p}}\leq
C^{k+1}\lambda^{k}\left\Vert v\right\Vert _{L^{p}}. \label{Bernstein2}%
\end{equation}

\end{lemma}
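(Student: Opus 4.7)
The plan is to reduce everything to the scale $\lambda=1$ by a dilation argument, and then exploit convolution with smooth Fourier cut-offs adapted to the ball $B$ and the annulus $\mathcal{C}$. More precisely, given the dilation $u_{\lambda}(x)=u(\lambda^{-1}x)$, one has $\widehat{u_{\lambda}}(\xi)=\lambda^{n}\hat u(\lambda\xi)$, so that if $\mathrm{Supp}(\hat u)\subset \lambda B$ then $\mathrm{Supp}(\widehat{u_{\lambda}})\subset B$, and $\|u_\lambda\|_{L^p}=\lambda^{n/p}\|u\|_{L^p}$, $\|\partial^\alpha u_\lambda\|_{L^q}=\lambda^{n/q-k}\|\partial^\alpha u\|_{L^q}$. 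Keeping track of the powers of $\lambda$ produced by these two identities immediately reduces the proof of both \eqref{Bernstein1} and \eqref{Bernstein2} to the case $\lambda=1$, and explains the exponent $k+n(\frac1p-\frac1q)$ on the right-hand side of \eqref{Bernstein1}.

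For \eqref{Bernstein1} at $\lambda=1$, I would pick $\varphi\in\mathcal{D}(\mathbb{R}^n)$ with $\varphi\equiv 1$ on $B$, so that $u=\mathcal F^{-1}\varphi\star u$ and $\partial^{\alpha}u=(\partial^{\alpha}\mathcal F^{-1}\varphi)\star u$. Young's convolution inequality with $1+\tfrac1q=\tfrac1r+\tfrac1p$ (which has a solution $r\in[1,\infty]$ precisely because $p\le q$) gives
\[
\|\partial^{\alpha}u\|_{L^{q}}\le \|\partial^{\alpha}\mathcal F^{-1}\varphi\|_{L^{r}}\,\|u\|_{L^{p}},
\]
and since $\mathcal F^{-1}\varphi\in\mathcal{S}$ the $L^{r}$-norm of $\partial^{\alpha}\mathcal F^{-1}\varphi$ is bounded by $C^{k+1}$ with $C$ depending only on $\varphi$ and $n$ (a standard bound using that $\varphi$ is fixed and $\alpha$ runs through multi-indices of length $k$).

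For \eqref{Bernstein2} the right-hand inequality follows exactly as above using a cutoff $\tilde\varphi\equiv 1$ on $\mathcal{C}$. The left-hand inequality is the delicate point, and is the main obstacle in the argument. I would construct $\Psi_\alpha\in\mathcal{D}(\mathbb{R}^{n}\setminus\{0\})$ supported in a slightly larger annulus $\tilde{\mathcal C}\supset\mathcal C$ such that, on $\mathcal{C}$,
\[
\sum_{|\alpha|=k}\Psi_\alpha(\xi)\,(i\xi)^{\alpha}=1,
\]
which is possible because $\sum_{|\alpha|=k}|\xi^{\alpha}|^{2}\gtrsim|\xi|^{2k}$ is bounded below on $\mathcal{C}$ (one may take $\Psi_\alpha=\tilde\varphi(\xi)\overline{(i\xi)^{\alpha}}/\sum_{|\beta|=k}|\xi^{\beta}|^{2}$). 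Writing
\[
v=\sum_{|\alpha|=k}\mathcal F^{-1}\Psi_\alpha\star\partial^{\alpha}v,
\]
Young's inequality with exponents $1+\tfrac1p=\tfrac11+\tfrac1p$ yields $\|v\|_{L^{p}}\le C\sum_{|\alpha|=k}\|\partial^{\alpha}v\|_{L^{p}}\le C'\sup_{|\alpha|=k}\|\partial^{\alpha}v\|_{L^{p}}$, and restoring the factor $\lambda^{k}$ by scaling closes the proof. The only non-routine aspect is verifying that the constants $\|\mathcal F^{-1}\Psi_\alpha\|_{L^1}$ and $\|\partial^{\alpha}\mathcal F^{-1}\varphi\|_{L^{r}}$ can be taken of the form $C^{k+1}$ uniformly in the multi-index, which follows from the fact that $\varphi,\tilde\varphi$ are fixed Schwartz functions independent of $k$.
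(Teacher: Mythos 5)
The paper offers no proof of this lemma: it is stated in the Appendix as a quoted result, with the reader referred to \cite{Dan1} for the full proof. Your argument is precisely the standard proof given there --- dilation to reduce to $\lambda=1$, Young's inequality with a Fourier cutoff equal to $1$ on $B$ (resp.\ on $\mathcal{C}$) for the two upper bounds, and the identity $\sum_{|\alpha|=k}\Psi_\alpha(\xi)(i\xi)^{\alpha}=1$ on the annulus for the reverse inequality --- and it is correct; the only step you leave implicit, the uniform bound $\|\mathcal{F}^{-1}\Psi_\alpha\|_{L^{1}}\leq C^{k+1}$, is indeed routine, since the denominator $\sum_{|\beta|=k}|\xi^{\beta}|^{2}$ is bounded below by $c^{k}$ on the fixed annulus and its derivatives there grow at most geometrically in $k$.
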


Next, let us introduce the dyadic partition of the space:

\begin{proposition}
\label{diadic}Let $\mathcal{C}$ be the annulus $\{\xi\in\mathbb{R}^{n}%
:3/4\leq\left\vert \xi\right\vert \leq8/3\}$. There exist two radial functions
$\chi\in\mathcal{D}(B(0,4/3))$ and $\varphi\in\mathcal{D(C)}$ valued in the
interval $\left[  0,1\right]  $ and such that:%
\begin{align}
\forall\xi &  \in\mathbb{R}^{n}\text{, \ }\chi(\xi)+\sum_{j\geq0}%
\varphi(2^{-j}\xi)=1\text{,}\label{25}\\
\forall\xi &  \in\mathbb{R}^{n}\backslash\{0\}\text{, \ }\sum_{j\in\mathbb{Z}%
}\varphi(2^{-j}\xi)=1\text{,}\label{26}\\
2  &  \leq\left\vert j-j^{\prime}\right\vert \Rightarrow\mathrm{Supp}%
(\varphi(2^{-j}\cdot))\cap\mathrm{Supp}(\varphi(2^{-j^{\prime}}\cdot
))=\emptyset\label{27}\\
j  &  \geq1\Rightarrow\mathrm{Supp}(\chi)\cap\mathrm{Supp}(\varphi(2^{-j}%
\cdot))=\emptyset\label{28}%
\end{align}
the set $\mathcal{\tilde{C}=B(}0,2/3\mathcal{)+C}$ is an annulus and we have%
\begin{equation}
\left\vert j-j^{\prime}\right\vert \geq5\Rightarrow2^{j}\mathcal{C}%
\cap2^{j^{\prime}}\mathcal{\tilde{C}}=\emptyset\text{.} \label{29}%
\end{equation}
Also the following inequalities hold true:%
\begin{align}
\forall\xi &  \in\mathbb{R}^{n}\text{, \ }\frac{1}{2}\leq\chi^{2}(\xi
)+\sum_{j\geq0}\varphi^{2}(2^{-j}\xi)\leq1\text{,}\label{210}\\
\forall\xi &  \in\mathbb{R}^{n}\backslash\{0\}\text{, \ }\frac{1}{2}\leq
\sum_{j\in\mathbb{Z}}\varphi^{2}(2^{-j}\xi)\leq1\text{.} \label{211}%
\end{align}

\end{proposition}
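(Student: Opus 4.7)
The plan is to build $\chi$ by hand as a smooth radial, radially nonincreasing bump, and then obtain $\varphi$ as the telescoping difference $\varphi(\xi):=\chi(\xi/2)-\chi(\xi)$. Concretely, I would choose $\chi\in\mathcal{D}(B(0,4/3))$ valued in $[0,1]$, equal to $1$ on $\overline{B(0,3/4)}$ and radially nonincreasing; such a $\chi$ is obtained by convolving $\mathbf{1}_{B(0,r)}$ for some $r\in(3/4,4/3)$ with a small radial nonnegative mollifier, choosing the mollifier width small enough that the outer support stays in $B(0,4/3)$ while the inner plateau on $B(0,3/4)$ is preserved. The nonnegativity and the valuing in $[0,1]$ of $\varphi$ then follow from $\chi(\xi/2)\ge\chi(\xi)$ (radial monotonicity) and from $0\le\chi\le1$. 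To place $\mathrm{Supp}\,\varphi\subset\mathcal{C}$, observe that on $B(0,3/4)$ both summands equal $1$ (since $|\xi/2|\le 3/8\le 3/4$), and outside $B(0,8/3)$ both summands vanish.

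Identities \eqref{25}--\eqref{26} come from the explicit telescoping
\begin{equation*}
\chi(\xi)+\sum_{j=0}^{M}\varphi(2^{-j}\xi)=\chi(2^{-M-1}\xi),\qquad\sum_{j=-N}^{M}\varphi(2^{-j}\xi)=\chi(2^{-M-1}\xi)-\chi(2^{N}\xi),
\end{equation*}
and letting $M\to\infty$ (for the first, using $\chi(2^{-M-1}\xi)\to\chi(0)=1$ by continuity), together with $N\to\infty$ for the second (using $\chi(2^{N}\xi)\to0$ for $\xi\neq0$, since $|2^{N}\xi|$ eventually exceeds $4/3$). The support statements \eqref{27}--\eqref{29} reduce to elementary arithmetic on the dilated annuli $2^{j}\mathcal{C}\subset\{2^{j}\cdot 3/4\le|\xi|\le 2^{j}\cdot 8/3\}$ and, with $\widetilde{\mathcal{C}}=B(0,2/3)+\mathcal{C}$, on $2^{j'}\widetilde{\mathcal{C}}\subset\{2^{j'}\cdot 1/12\le|\xi|\le 2^{j'}\cdot 10/3\}$, combined with $\mathrm{Supp}\,\chi\subset B(0,4/3)$; one simply checks that a gap of two (resp.\ five) dyadic scales makes the corresponding radii disjoint.

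For the squared partitions \eqref{210}--\eqref{211}, the upper bounds $\le 1$ are immediate from $\chi^{2}\le\chi$ and $\varphi(2^{-j}\cdot)^{2}\le\varphi(2^{-j}\cdot)$ (both functions take values in $[0,1]$) combined with \eqref{25}--\eqref{26}. The lower bounds $\ge 1/2$ exploit the fact that by \eqref{27}--\eqref{28}, at each $\xi$ at most two of the functions $\chi(\xi),\,\varphi(2^{-j}\xi)$ are nonzero; if their values are $a,b\ge 0$ with $a+b=1$, then $a^{2}+b^{2}\ge(a+b)^{2}/2=1/2$ by Cauchy--Schwarz.

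There is no genuine obstacle here; the only thing to be careful about is the quantitative bookkeeping of the radii, since the inner plateau of $\chi$ must reach exactly $3/4$ while the outer support must stay inside $4/3$, and this forces the auxiliary cutoff radius $r$ and the mollifier width to be tuned within narrow intervals. Once those choices are made, every one of the six assertions follows automatically from the construction, and the proposition is a standard exercise in Littlewood--Paley theory.
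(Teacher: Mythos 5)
Your construction is correct and is precisely the standard one: the paper does not prove this proposition itself but defers to the reference [Dan1] (Bahouri--Chemin--Danchin), where the argument is exactly the telescoping scheme $\varphi(\xi)=\chi(\xi/2)-\chi(\xi)$ built from a radial, radially nonincreasing plateau function, with the support statements reduced to arithmetic on dyadic radii and the lower bound $1/2$ obtained from the fact that at most two terms of the partition are nonzero at any point. All of your quantitative checks (in particular the borderline case $j'=j+5$ in \eqref{29}, which works because $\widetilde{\mathcal{C}}$ is open) are sound, so there is nothing to add.
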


From now on we fix two functions $\chi$ and $\varphi$ satisfying the
assertions of Proposition \ref{diadic} and let us denote by $h$ respectively
$\tilde{h}$ their Fourier inverses. The following lemma shows how to
reconstruct a tempered distribution once we know its nonhomogeneous dyadic
blocks (see $\left(  \text{\ref{diadicNeomogen}}\right)  $).

\begin{lemma}
For any $u\in\mathcal{S}^{\prime}$ we have:%
\[
u=\sum_{j\in\mathbb{Z}}\Delta_{j}u\text{ \ in \ }\mathcal{S}^{\prime}\left(
\mathbb{R}^{n}\right)  .
\]

\end{lemma}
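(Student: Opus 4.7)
My plan is to reduce the claim to the convergence, in the Schwartz class, of the partial sums of the multiplier $\chi(\xi)+\sum_{j=0}^N\varphi(2^{-j}\xi)$ applied to an arbitrary test function, and then conclude by duality.

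\textbf{Step 1: Rewriting the partial sum.} Since $\Delta_j u=0$ for $j\le -2$, it is enough to show that
\[
S_N u:=\Delta_{-1}u+\sum_{j=0}^{N}\Delta_j u\longrightarrow u\quad\text{in }\mathcal{S}'(\mathbb{R}^n)\text{ as }N\to\infty.
\]
Setting $\psi_N(\xi)=\chi(\xi)+\sum_{j=0}^{N}\varphi(2^{-j}\xi)$, the operator $S_N$ is nothing but the Fourier multiplier $\psi_N(D)$, and by $(\ref{25})$ we have $1-\psi_N(\xi)=\sum_{j>N}\varphi(2^{-j}\xi)$.

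\textbf{Step 2: Key pointwise properties of $1-\psi_N$.} The support conditions in Proposition \ref{diadic} (namely that $\mathrm{Supp}\,\varphi\subset\mathcal{C}$) imply that $\varphi(2^{-j}\cdot)$ is supported in the annulus $\{3\cdot 2^{j-2}\le |\xi|\le 8\cdot 2^{j}/3\}$; hence $1-\psi_N$ vanishes on the ball $\{|\xi|\le 3\cdot 2^{N-1}\}$. Moreover, by the almost-orthogonality property $(\ref{27})$, at every point $\xi$ at most two terms of the sum $\sum_{j>N}\varphi(2^{-j}\xi)$ are nonzero, so
\[
|\partial^{\beta}(1-\psi_N)(\xi)|\le 2\,\|\partial^{\beta}\varphi\|_{L^\infty}\,2^{-(N+1)|\beta|}\,\mathbf{1}_{\{|\xi|\ge 3\cdot 2^{N-1}\}}
\]
for every multi-index $\beta$ (with the obvious adjustment for $\beta=0$).

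\textbf{Step 3: Convergence in $\mathcal{S}$.} For $\phi\in\mathcal{S}$ one has $\psi_N(D)\phi-\phi=\mathcal{F}^{-1}\bigl((\psi_N-1)\widehat\phi\bigr)$. By the Leibniz rule and Step~2, for any multi-indices $\alpha$ and any $M\in\mathbb{N}$,
\[
\sup_{\xi}\langle\xi\rangle^M\bigl|\partial^{\alpha}\!\bigl((\psi_N-1)\widehat\phi\bigr)(\xi)\bigr|\le C_{\alpha,M}\,\mathbf{1}_{\{|\xi|\ge 3\cdot 2^{N-1}\}}\sup_{|\gamma|\le|\alpha|}\sup_{\xi}\langle\xi\rangle^M|\partial^{\gamma}\widehat\phi(\xi)|.
\]
Since $\widehat\phi\in\mathcal{S}$, the right-hand side can be bounded by $C'_{\alpha,M}\,2^{-N}\sup_{|\gamma|\le|\alpha|}\sup_{\xi}\langle\xi\rangle^{M+1}|\partial^{\gamma}\widehat\phi(\xi)|$, which tends to zero. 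As the Fourier transform is a homeomorphism of $\mathcal{S}$, we conclude that $\psi_N(D)\phi\to\phi$ in $\mathcal{S}$.

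\textbf{Step 4: Conclusion by duality.} Because $\chi$ and $\varphi$ are radial, hence real and even, the multiplier $\psi_N(D)$ is self-adjoint on the duality $\langle \mathcal{S}',\mathcal{S}\rangle$. Therefore, for any $u\in\mathcal{S}'$ and any $\phi\in\mathcal{S}$,
\[
\langle S_N u,\phi\rangle=\langle u,\psi_N(D)\phi\rangle\xrightarrow[N\to\infty]{}\langle u,\phi\rangle,
\]
by continuity of $u$ on $\mathcal{S}$ and Step~3. This proves the lemma.

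The only mildly delicate point is Step~3: one must simultaneously exploit the shrinking support (to gain arbitrarily large negative powers in $N$) and the uniform bounds on the derivatives of $\psi_N$ (which do not improve in $N$ for $\beta=0$), and combine them via the Schwartz decay of $\widehat\phi$. Everything else is a formal reduction.
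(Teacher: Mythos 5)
Your proof is correct. The paper itself does not prove this lemma — it is stated without proof and deferred to the reference on Littlewood--Paley theory — so there is nothing internal to compare against; your argument is the standard one: reduce to showing $\psi_N(D)\phi\to\phi$ in $\mathcal{S}$ for test functions $\phi$ (using that $1-\psi_N$ is supported in $\{|\xi|\gtrsim 2^N\}$ with derivative bounds uniform in $N$, combined with the Schwartz decay of $\widehat\phi$), then transfer to $\mathcal{S}'$ by duality, the radiality of $\chi$ and $\varphi$ making $\psi_N(D)$ self-transpose. The only cosmetic blemish is in the displayed inequality of Step~3, where the indicator $\mathbf{1}_{\{|\xi|\ge 3\cdot 2^{N-1}\}}$ sits outside a supremum over all $\xi$; what you mean (and what you use in the following sentence) is that the supremum is restricted to that region, which is how the factor $2^{-N}$ is extracted. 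With that reading the estimate is exactly right and the proof is complete.
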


\subsection{Properties of Besov spaces}

The following propositions list important basic properties of Besov spaces
that are used through the paper.

\begin{proposition}
\label{propA}A tempered distribution $u\in S^{\prime}$ belongs to $B_{p,r}%
^{s}$ if and only if there exists a sequence $\left(  c_{j}\right)  _{j}$ such
that $\left(  2^{js}c_{j}\right)  _{j}\in\ell^{r}(\mathbb{Z)}$ with norm $1$
and a constant $C\left(  u\right)  >0$ such that for any $j\in\mathbb{Z}$ we
have%
\[
\left\Vert \Delta_{j}u\right\Vert _{L^{p}}\leq C\left(  u\right)  c_{j}.
\]

\end{proposition}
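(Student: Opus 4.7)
The plan is to unpack the definition of $\|\cdot\|_{B_{p,r}^{s}}$ in each direction; both implications are essentially tautological once we recall that by definition $\|u\|_{B_{p,r}^{s}}$ is precisely $\|(2^{js}\|\Delta_{j}u\|_{L^{p}})_{j\in\mathbb{Z}}\|_{\ell^{r}(\mathbb{Z})}$. The forward direction is a matter of normalising the sequence $(\|\Delta_{j}u\|_{L^{p}})_{j}$, and the backward direction is an immediate comparison of $\ell^{r}$-norms. No deep estimate, embedding, or Bernstein-type inequality is needed.

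For the direct implication, assume $u\in B_{p,r}^{s}$. If $\|u\|_{B_{p,r}^{s}}>0$, set $C(u):=\|u\|_{B_{p,r}^{s}}$ and
\[
c_{j}:=\frac{\|\Delta_{j}u\|_{L^{p}}}{C(u)}.
\]
Then $\|\Delta_{j}u\|_{L^{p}}=C(u)c_{j}$ (so the required inequality holds with equality), and
\[
\bigl\|(2^{js}c_{j})_{j\in\mathbb{Z}}\bigr\|_{\ell^{r}(\mathbb{Z})}=\frac{1}{C(u)}\bigl\|(2^{js}\|\Delta_{j}u\|_{L^{p}})_{j}\bigr\|_{\ell^{r}}=\frac{\|u\|_{B_{p,r}^{s}}}{C(u)}=1,
\]
as required. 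The edge case $\|u\|_{B_{p,r}^{s}}=0$ (i.e.\ every $\Delta_{j}u$ vanishes in $L^{p}$) must be handled separately, since the statement requires $C(u)>0$: here we simply pick any nonnegative sequence $(c_{j})$ with $\|(2^{js}c_{j})_{j}\|_{\ell^{r}}=1$ (e.g.\ $c_{j_{0}}=2^{-j_{0}s}$ for a single index $j_{0}$, zero otherwise) and any positive constant $C(u)$; the inequality $0\leq C(u)c_{j}$ is then trivial.

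For the reverse implication, assume such a sequence $(c_{j})$ and constant $C(u)>0$ exist. Then the monotonicity of the $\ell^{r}$-norm applied to the pointwise bound $2^{js}\|\Delta_{j}u\|_{L^{p}}\leq C(u)\,2^{js}c_{j}$ yields
\[
\|u\|_{B_{p,r}^{s}}=\bigl\|(2^{js}\|\Delta_{j}u\|_{L^{p}})_{j}\bigr\|_{\ell^{r}(\mathbb{Z})}\leq C(u)\,\bigl\|(2^{js}c_{j})_{j}\bigr\|_{\ell^{r}(\mathbb{Z})}=C(u)<\infty,
\]
so $u\in B_{p,r}^{s}$. There is no genuine obstacle in this proof: the only point that needs any care is the trivial zero case in the forward direction, imposed by the strict positivity requirement on $C(u)$.
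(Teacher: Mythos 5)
Your proof is correct: the paper itself gives no argument for Proposition \ref{propA} (it simply defers to the reference on Besov spaces, the statement being an immediate unpacking of the definition $\left\Vert u\right\Vert _{B_{p,r}^{s}}=\left\Vert \left(2^{js}\left\Vert \Delta_{j}u\right\Vert _{L^{p}}\right)_{j}\right\Vert _{\ell^{r}(\mathbb{Z})}$), and your normalisation in the forward direction and monotonicity of the $\ell^{r}$-norm in the reverse direction are exactly the intended argument. Your separate treatment of the degenerate case $\left\Vert u\right\Vert _{B_{p,r}^{s}}=0$, forced by the requirement $C\left(u\right)>0$, is the only point needing care and you handle it properly.
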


\begin{proposition}
\label{PropBesov}Let $\ s,\tilde{s}\in\mathbb{R}$ and $r,\tilde{r}\in\left[
1,\infty\right]  $.

\begin{itemize}
\item $B_{p,r}^{s}$ is a Banach space which is continuously embedded in
$\mathcal{S}^{\prime}$.

\item The inclusion $B_{p,r}^{s}\subset B_{p,\tilde{r}}^{\tilde{s}}$ is
continuous whenever $\tilde{s}<s$ or $s=\tilde{s}$ and $\tilde{r}>r\,$.

\item We have the following continuous inclusion $B_{p,1}^{\frac{n}{p}}%
\subset\mathcal{C}_{0}$\footnote{$\mathcal{C}_{0}$ is the space of continuous
bounded functions which decay at infinity.}$(\subset L^{\infty})$.

\item (Fatou property) If $\left(  u_{n}\right)  _{n\in\mathbb{N}}$ is a
bounded sequence of $B_{p,r}^{s}$ which tends to $u$ in $\mathcal{S}^{\prime}$
then $u\in B_{p,r}^{s}$ and%
\[
\left\Vert u\right\Vert _{B_{p,r}^{s}}\leq\liminf_{n}\left\Vert u_{n}%
\right\Vert _{B_{p,r}^{s}}.
\]

\item If $r<\infty$ then
\[
\lim_{j\rightarrow\infty}\left\Vert u-S_{j}u\right\Vert _{B_{p,r}^{s}}=0.
\]

\end{itemize}
\end{proposition}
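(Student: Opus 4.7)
The plan is to verify each item in turn, using only the dyadic characterization, Bernstein's Lemma, and elementary facts about $\ell^{r}$ sequence spaces. For the first item, I will start with the fact that each block $\Delta_{j}u$ is the convolution of $u\in\mathcal{S}'$ with a Schwartz function (namely $2^{jn}h(2^{j}\cdot)$ or $\tilde{h}$), so $\Delta_{j}u$ is a smooth function of polynomial growth and the action $u\mapsto\Delta_{j}u$ is continuous from $\mathcal{S}'$ to $\mathcal{S}'$. For the continuous embedding $B_{p,r}^{s}\hookrightarrow\mathcal{S}'$, I will test against $\phi\in\mathcal{S}$ and bound $|\langle u,\phi\rangle|\leq\sum_{j}|\langle\Delta_{j}u,\Delta_{j}^{\ast}\phi\rangle|$ using Hölder in space and in the $j$-variable together with the fast decay of Schwartz norms of $\Delta_{j}\phi$. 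Completeness then follows by a standard argument: a Cauchy sequence $(u_{n})$ in $B_{p,r}^{s}$ is Cauchy in $\mathcal{S}'$ by the embedding, so it has a limit $u\in\mathcal{S}'$; the Fatou property (item four) then gives $u\in B_{p,r}^{s}$ with the correct norm bound, and a second application to $u_{n}-u_{m}$ produces convergence in norm.

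For the second item (monotonicity of the indices), I will just compare the sequences $(2^{js}\Vert\Delta_{j}u\Vert_{L^{p}})_{j}$ and $(2^{j\tilde{s}}\Vert\Delta_{j}u\Vert_{L^{p}})_{j}$. When $s=\tilde{s}$ and $\tilde{r}\geq r$, the inclusion $\ell^{r}(\mathbb{Z})\hookrightarrow\ell^{\tilde{r}}(\mathbb{Z})$ does the job. When $\tilde{s}<s$, I factor $2^{j\tilde{s}}=2^{-j(s-\tilde{s})}\cdot 2^{js}$ and combine the bounded sequence $(2^{-j(s-\tilde{s})})_{j\geq -1}$ (which actually lies in every $\ell^{q}$ for $q>0$ once restricted to $j\geq -1$, as we only have positive dyadic blocks after $j=-1$) with $\ell^{r}$-summation of $(2^{js}\Vert\Delta_{j}u\Vert_{L^{p}})_{j}$ via Hölder. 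For item three, I invoke Bernstein's Lemma as stated in Lemma \ref{Bernstein}: $\Vert\Delta_{j}u\Vert_{L^{\infty}}\lesssim 2^{jn/p}\Vert\Delta_{j}u\Vert_{L^{p}}$, so that $\sum_{j}\Vert\Delta_{j}u\Vert_{L^{\infty}}\lesssim\Vert u\Vert_{B_{p,1}^{n/p}}<\infty$, which shows that the dyadic series converges normally in $L^{\infty}$ to a continuous function; decay at infinity follows by truncating $u$ at finitely many blocks (which are Schwartz-convolved and hence in $\mathcal{C}_{0}$) and bounding the tail.

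For item four, the Fatou property, the key observation is that the convolution operators $\Delta_{j}$ are continuous on $\mathcal{S}'$, so $u_{n}\to u$ in $\mathcal{S}'$ implies $\Delta_{j}u_{n}\to\Delta_{j}u$ in $\mathcal{S}'$ for each fixed $j$. Because $(\Delta_{j}u_{n})_{n}$ is bounded in $L^{p}$ (by hypothesis), the standard $L^{p}$-Fatou/weak compactness argument gives $\Vert\Delta_{j}u\Vert_{L^{p}}\leq\liminf_{n}\Vert\Delta_{j}u_{n}\Vert_{L^{p}}$; I then apply Fatou's lemma in $\ell^{r}(\mathbb{Z})$ (viewed as an integral against counting measure) to conclude. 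Item five is the density of low-frequency truncations: writing $\Vert u-S_{j}u\Vert_{B_{p,r}^{s}}^{r}=\sum_{k\geq j-N}2^{ksr}\Vert\Delta_{k}(u-S_{j}u)\Vert_{L^{p}}^{r}$ for a fixed spectral overlap $N$, dominating by the full $\ell^{r}$ tail of $(2^{ks}\Vert\Delta_{k}u\Vert_{L^{p}})_{k}$ and invoking dominated convergence in $\ell^{r}$, which is licit precisely when $r<\infty$.

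The only genuinely delicate point is item one's embedding into $\mathcal{S}'$: one must check that the partial sums $\sum_{|j|\leq N}\Delta_{j}u$ converge in $\mathcal{S}'$ and that the limit coincides with $u$, which requires a bit of care when $s<0$ because the low-frequency part $\Delta_{-1}u$ is already given directly, but the high-frequency tail needs the Schwartz-norm decay of $\Delta_{j}\phi$ for test functions $\phi\in\mathcal{S}$. Everything else reduces to Bernstein's inequalities and standard sequence-space manipulations; I expect no surprises beyond bookkeeping.
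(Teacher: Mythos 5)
Your proposal is mathematically sound, but it takes a different route from the paper only in the trivial sense that the paper offers no proof at all: Proposition \ref{PropBesov} is disposed of with the single sentence that all items are ``given or immediate consequences of the results presented in \cite{Dan1}, pages $107$--$108$.'' What you have written out is, in effect, the standard textbook argument from that reference: duality against $\tilde{\Delta}_{j}\phi$ with the fast decay of $\left\Vert \Delta_{j}\phi\right\Vert _{L^{p^{\prime}}}$ for the embedding into $\mathcal{S}^{\prime}$, completeness via the Fatou property (which is legitimate and non-circular, since your proof of item four uses only the continuity of $\Delta_{j}$ on $\mathcal{S}^{\prime}$ and weak compactness in $L^{p}$, not completeness), H\"older against the geometric sequence $\left(  2^{-j(s-\tilde{s})}\right)  _{j\geq-1}$ for the embeddings, Bernstein for item three, and dominated convergence of the $\ell^{r}$ tail for item five. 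Two small points deserve tightening. First, in item three your justification that each block lies in $\mathcal{C}_{0}$ because it is ``Schwartz-convolved'' is not quite the right reason (a constant is also the convolution of a Schwartz function with a tempered distribution); the decay of $\Delta_{j}u$ comes from $\Delta_{j}u\in L^{p}$ with $p<\infty$ together with band-limitedness, e.g.\ writing $\Delta_{j}u=\psi_{j}\star\Delta_{j}u$ with $\psi_{j}\in\mathcal{S}$ and using $L^{p}\star L^{p^{\prime}}\subset\mathcal{C}_{0}$. Relatedly, the inclusion $B_{p,1}^{n/p}\subset\mathcal{C}_{0}$ as stated fails for $p=\infty$ (constants belong to $B_{\infty,1}^{0}$), so the item should carry the implicit restriction $p<\infty$; this is harmless here since the paper only uses $p=2$. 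Second, for $p=1$ in the Fatou property the weak-$\ast$ compactness argument a priori produces a measure, so one should note that $\Delta_{j}u$ is already known to be a (smooth) function and test against $\mathcal{C}_{c}$ to recover the $L^{1}$ bound. Neither point affects the validity of your overall plan.
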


\begin{proposition}
Let us consider $m\in\mathbb{R}$ and a smooth function $f:\mathbb{R}%
^{n}\rightarrow\mathbb{R}$ such that for all multi-index $\alpha$, there
exists a constant $C_{\alpha}$ such that:%
\[
\forall\xi\in\mathbb{R}^{n}\text{ \ }\left\vert \partial^{\alpha}f\left(
\xi\right)  \right\vert \leq C_{\alpha}\left(  1+\left\vert \xi\right\vert
\right)  ^{m-\left\vert \alpha\right\vert }.
\]
Then the operator $f\left(  D\right)  $ is continuous from $B_{p,r}^{s}$ to
$B_{p,r}^{s-m}$.
\end{proposition}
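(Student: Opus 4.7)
The plan is to use the Littlewood-Paley characterization of $B^{s-m}_{p,r}$: estimate each dyadic block $\|\Delta_j f(D) u\|_{L^p}$ by $C\,2^{jm}\|\Delta_j u\|_{L^p}$ (plus finitely many neighboring blocks at low frequency), multiply by $2^{j(s-m)}$, and take the $\ell^r(\mathbb{Z})$ norm; the powers $2^{jm}$ will cancel, producing $\|u\|_{B^s_{p,r}}$. Since $\Delta_j$ and $f(D)$ are both Fourier multipliers, they commute, so the task reduces to a kernel estimate for the localized symbols.

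For $j\geq 0$, the Fourier support of $\Delta_j u$ lies inside the annulus $2^j\mathcal{C}$. I would pick an auxiliary bump $\tilde{\varphi}\in\mathcal{D}(\mathbb{R}^n\setminus\{0\})$, supported in a slightly larger annulus and equal to $1$ on $\mathrm{Supp}\,\varphi$, so that
\[
\Delta_j f(D)u \;=\; f(D)\Delta_j u \;=\; K_j \ast \Delta_j u, \qquad K_j := \mathcal{F}^{-1}\!\bigl(\tilde{\varphi}(2^{-j}\cdot)\, f\bigr).
\]
After the change of variables $\xi \mapsto 2^j\xi$, one has
\[
K_j(x) = 2^{jn}\,2^{jm}\,\bigl(\mathcal{F}^{-1}\psi_j\bigr)(2^j x), \qquad \psi_j(\xi) := 2^{-jm}\,\tilde{\varphi}(\xi)\,f(2^j\xi),
\]
so that $\|K_j\|_{L^1} = 2^{jm}\|\mathcal{F}^{-1}\psi_j\|_{L^1}$. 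The hypothesis on $f$, together with the fact that $|2^j\xi|\sim 2^j$ on $\mathrm{Supp}\,\tilde{\varphi}$, yields $|\partial^\alpha \psi_j(\xi)|\leq C_\alpha$ uniformly in $j$: the factors $2^{j|\beta|}$ produced when differentiating $f(2^j\xi)$ are absorbed by $2^{-jm}(1+|2^j\xi|)^{m-|\beta|}\sim 2^{j(-|\beta|)}$. Hence $(\psi_j)_j$ is bounded in $\mathcal{S}$ uniformly in $j$, and in particular $\sup_j \|\mathcal{F}^{-1}\psi_j\|_{L^1}<\infty$. Young's convolution inequality then gives $\|\Delta_j f(D)u\|_{L^p}\leq C\,2^{jm}\|\Delta_j u\|_{L^p}$.

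For $j=-1$ the symbol $\chi f$ is smooth with compact support in $B(0,4/3)$, and since $\chi+\varphi\equiv 1$ on $B(0,4/3)$ (as the higher dyadic bumps $\varphi(2^{-j}\cdot)$, $j\geq 1$, vanish there), one has $\chi(D)f(D)u=\chi(D)f(D)(\Delta_{-1}u+\Delta_{0}u)$, whence $\|\Delta_{-1} f(D)u\|_{L^p}\leq C(\|\Delta_{-1}u\|_{L^p}+\|\Delta_{0}u\|_{L^p})$ again by Young. Multiplying the two block estimates by $2^{j(s-m)}$ and taking the $\ell^r$-norm over $j\geq -1$ produces $\|f(D)u\|_{B^{s-m}_{p,r}}\leq C\|u\|_{B^s_{p,r}}$, which is the stated continuity.

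The only nontrivial point is the uniform-in-$j$ Schwartz bound on $\psi_j$; the rest is bookkeeping with Young's inequality and the dyadic localization. This uniform bound is the heart of the argument, as it is what converts a pointwise symbolic estimate into a kernel estimate for the rescaled multipliers, and I expect the short verification of the scaling cancellation to be the main (and only) technical moment of the proof.
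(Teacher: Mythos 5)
Your proof is correct and is the standard argument. The paper does not actually prove this proposition: it is listed among the Littlewood--Paley facts delegated to the reference \cite{Dan1} (pages 107--108), where the result is established by exactly the rescaling-and-kernel argument you give, namely reducing the block estimate $\left\Vert \Delta_{j}f\left(  D\right)  u\right\Vert _{L^{p}}\leq C2^{jm}\left\Vert \Delta_{j}u\right\Vert _{L^{p}}$ to a uniform $L^{1}$ bound on the dilated kernels $\mathcal{F}^{-1}\psi_{j}$ via the symbol hypothesis. Your treatment of the low-frequency block (only $\Delta_{-1}u$ and $\Delta_{0}u$ contribute under $\chi(D)$) and the cancellation of the $2^{jm}$ factors against the weight $2^{j\left(  s-m\right)  }$ are both handled correctly, so nothing is missing.
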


\begin{proposition}
\label{compact}Let $1\leq r\leq\infty$, $s\in\mathbb{R}$ and $\varepsilon>0$.
For all $\phi\in\mathcal{S}$, the map $u\rightarrow\phi u$ is compact from
$B_{2,r}^{s}$ to $B_{2,r}^{s-\varepsilon}$.
\end{proposition}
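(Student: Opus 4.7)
The plan is to combine a dyadic decomposition with a spatial truncation. Fix a bounded sequence $(u_n) \subset B_{2,r}^s$, say $\|u_n\|_{B_{2,r}^s}\leq M$; the goal is to extract a subsequence of $(\phi u_n)$ that is Cauchy in $B_{2,r}^{s-\varepsilon}$.

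First I would observe that since $\phi\in\mathcal{S}$ belongs to every Besov space, the product law (Proposition \ref{produs}) gives continuity of the multiplication $u\mapsto \phi u$ from $B_{2,r}^s$ to $B_{2,r}^s$. In particular, $(\phi u_n)$ is uniformly bounded in $B_{2,r}^s$. I then split
\[
\phi u_n = S_N(\phi u_n) + (I-S_N)(\phi u_n),
\]
and bound the high-frequency tail by the $\varepsilon$-gain
\[
\|(I-S_N)(\phi u_n)\|_{B_{2,r}^{s-\varepsilon}} \leq C\,2^{-N\varepsilon}\|\phi u_n\|_{B_{2,r}^s}\leq C M\,2^{-N\varepsilon},
\]
which is uniformly small for $N$ large. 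It remains to show that for fixed $N$, the low-frequency part $\bigl(S_N(\phi u_n)\bigr)_n$ is relatively compact in $B_{2,r}^{s-\varepsilon}$; since its Fourier support lies in a fixed ball, this is equivalent to relative compactness in $L^2$ by Bernstein's lemma.

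For that, I would further split $\phi = \chi_R\phi + (1-\chi_R)\phi$ using a smooth spatial cutoff $\chi_R$ equal to $1$ on $B(0,R)$. Because $\phi\in\mathcal{S}$, the Schwartz seminorms of $(1-\chi_R)\phi$ vanish as $R\to\infty$, so its Besov multiplier norm tends to $0$, making $\|(1-\chi_R)\phi\,u_n\|_{B_{2,r}^s}$ uniformly small. For the remaining piece $S_N(\chi_R\phi\,u_n)$, I would apply the Rellich--Kolmogorov criterion in $L^2$: the uniform $L^2$-bound follows from Bernstein; equicontinuity of translations follows from the fact that its Fourier support is contained in a fixed compact set (so Bernstein bounds derivatives in $L^2$); and tightness at infinity follows because $\chi_R\phi\,u_n$ is supported in $B(0,2R)$ and $S_N$ is convolution with a Schwartz kernel $h_N$, so $S_N(\chi_R\phi\,u_n)$ inherits the decay of $h_N$ at infinity uniformly in $n$ via Cauchy--Schwarz on $|h_N(x-y)|$ against the $L^2$-mass of $\chi_R\phi\,u_n$.

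A standard diagonal extraction over $N,R\to\infty$, combined with the triangle inequality, then produces a subsequence $(\phi u_{n_k})$ that is Cauchy in $B_{2,r}^{s-\varepsilon}$, and completeness concludes. The main obstacle is the tightness-at-infinity step: one has to verify that the convolution $h_N \ast (\chi_R\phi\,u_n)$ is small on $\{|x|>R'\}$ uniformly in $n$ as $R'\to\infty$. This is where the fast decay of $h_N$ (from $h_N\in\mathcal S$) and the compact support of $\chi_R$ combine, but care is required when $s<0$, in which case $u_n$ is only a distribution; the argument still goes through after noting that $\chi_R\phi\,u_n$ is bounded in $B_{2,r}^s(\mathbb{R}^n)$ with compact support and that convolution with a Schwartz kernel sends it to a function with the required $L^2$-decay at infinity.
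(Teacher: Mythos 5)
Your argument is correct in substance, and it is worth noting that the paper itself does not prove this proposition at all: it is listed among the facts taken from Bahouri--Chemin--Danchin (\cite{Dan1}, pages 107--108). So you have supplied a genuine proof where the paper supplies a citation. Your route is the standard one, and each step is sound: the frequency truncation with the quantitative gain $2^{-N\varepsilon}$ handles the high frequencies (and works for $r=\infty$ as well, since you never need $S_Nv\to v$ in $B^{s}_{2,r}$, only the tail bound in $B^{s-\varepsilon}_{2,r}$); the reduction of the low-frequency part to $L^{2}$-compactness via Bernstein is legitimate because only finitely many dyadic blocks survive; and the spatial cutoff plus Rellich--Kolmogorov, with tightness at infinity obtained by pairing the compactly supported distribution $\chi_R\phi\,u_n$ against the rapidly decaying translates of the kernel of $S_N$, is exactly how the compact embedding $B^{s}_{2,r}(K)\hookrightarrow B^{s-\varepsilon}_{2,r}$ is usually established. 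The one point you should not gloss over is your very first step: Proposition \ref{produs} of the paper is stated only for $s>0$, whereas the proposition you are proving allows arbitrary $s\in\mathbb{R}$. For $s\le 0$ the boundedness of $u\mapsto\phi u$ on $B^{s}_{2,r}$ requires the full Bony decomposition (estimating $T_u\phi$ via $u\in B^{s-n/2}_{\infty,\infty}$ and the remainder via the smoothness of $\phi$), or at least a multiplier estimate of the form $\Vert\phi u\Vert_{B^{s}_{2,r}}\lesssim\Vert\phi\Vert_{B^{\vert s\vert+1}_{\infty,1}}\Vert u\Vert_{B^{s}_{2,r}}$; the same refinement is what justifies your claim that the multiplier norm of $(1-\chi_R)\phi$ is controlled by Schwartz seminorms. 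This is a citation gap rather than a mathematical one, and for the purposes of this paper (where the proposition is only invoked with $s-1>n/2$) the restriction $s>0$ would in any case suffice.
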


The next result deals with product estimates in nonhomogeneous Besov spaces.

\begin{theorem}
\label{produs}A constant $C$ exists such that the following holds true.
Consider a real number $s>0$ and any $\left(  p,p_{1},p_{2},p_{3}%
,p_{4},r\right)  \in\left[  1,\infty\right]  ^{6}$ such that%
\[
\frac{1}{p}=\frac{1}{p_{1}}+\frac{1}{p_{2}}=\frac{1}{p_{3}}+\frac{1}{p_{4}}.
\]
For any $\left(  u,v\right)  \in L^{p_{2}}\times\left(  L^{p_{4}}\cap
B_{p_{1},r}^{s}\right)  $ such that $\nabla u\in B_{p_{3,r}}^{s-1}$ we have:%
\[
\left\Vert uv\right\Vert _{B_{p,r}^{s}}\leq\left\Vert u\right\Vert _{L^{p_{2}%
}}\left\Vert v\right\Vert _{B_{p_{1},r}^{s}}+\left\Vert \nabla u\right\Vert
_{B_{p_{3,r}}^{s-1}}\left\Vert v\right\Vert _{L^{p_{4}}}.
\]

\end{theorem}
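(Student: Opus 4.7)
The proof I would carry out rests on Bony's paraproduct decomposition
\[
uv = T_u v + T_v u + R(u,v),
\]
where $T_u v = \sum_j S_{j-1} u \, \Delta_j v$, $T_v u = \sum_j S_{j-1} v \, \Delta_j u$, and $R(u,v) = \sum_j \Delta_j u \, \widetilde{\Delta}_j v$ with $\widetilde{\Delta}_j = \Delta_{j-1}+\Delta_j+\Delta_{j+1}$. Each of the three pieces will be estimated in $B^s_{p,r}$ separately, by exploiting the compact frequency support of the summands together with H\"older's inequality, used with each of the two splittings $\tfrac{1}{p}=\tfrac{1}{p_1}+\tfrac{1}{p_2}=\tfrac{1}{p_3}+\tfrac{1}{p_4}$.

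The paraproduct $T_u v$ is the easy piece: since $S_{j-1}u \,\Delta_j v$ has spectrum in a $2^j$-annulus and $\|S_{j-1}u\|_{L^{p_2}}\leq C\|u\|_{L^{p_2}}$, H\"older with exponents $(p_2,p_1)$ gives $\|S_{j-1}u\,\Delta_j v\|_{L^p}\leq C c_j 2^{-js}\|u\|_{L^{p_2}}\|v\|_{B^s_{p_1,r}}$ for some $(c_j)\in\ell^r$, so that $\|T_u v\|_{B^s_{p,r}}\leq C\|u\|_{L^{p_2}}\|v\|_{B^s_{p_1,r}}$. The main technical point is the estimate of $T_v u$, which calls for what I would call a Bernstein--Riesz lift. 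Since $\varphi$ vanishes near the origin, one can write $\varphi(\xi)=\sum_{k=1}^{n}(-i\xi_k)\theta_k(\xi)$ with $\theta_k(\xi):=(i\xi_k/|\xi|^2)\varphi(\xi)$ smooth and compactly supported in an annulus; Fourier-inverting yields, for $j\geq 0$,
\[
\Delta_j u \;=\; -\,2^{-j}\sum_k \theta_k(2^{-j}D)\,\partial_k u,
\]
and hence $\|\Delta_j u\|_{L^{p_3}}\leq C\,2^{-j}\|\widetilde{\Delta}_j\nabla u\|_{L^{p_3}}\leq C\,c_j 2^{-js}\|\nabla u\|_{B^{s-1}_{p_3,r}}$. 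Combined with $\|S_{j-1}v\|_{L^{p_4}}\leq C\|v\|_{L^{p_4}}$ (and the fact that, with the standard convention, $T_v u$ only involves indices $j\geq 1$), this produces $\|T_v u\|_{B^s_{p,r}}\leq C\|v\|_{L^{p_4}}\|\nabla u\|_{B^{s-1}_{p_3,r}}$.

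For the remainder $R(u,v)$ I recall that $\Delta_j u\,\widetilde{\Delta}_j v$ has spectrum in a ball of radius $\sim 2^j$, so only indices $j\geq l-N$ contribute to $\Delta_l R(u,v)$. For $j\geq 0$ I would pair the Bernstein--Riesz bound on $\Delta_j u$ with the uniform estimate $\|\widetilde{\Delta}_j v\|_{L^{p_4}}\leq C\|v\|_{L^{p_4}}$, which yields a convolution-in-$j$ whose $\ell^r$-summability is ensured precisely by the assumption $s>0$ and contributes a term of the type $\|v\|_{L^{p_4}}\|\nabla u\|_{B^{s-1}_{p_3,r}}$. The main obstacle, and the reason the classical product estimate must be refined here, is the lowest-frequency block $j=-1$: in the bore-type setting $u$ need not belong to $L^{p_3}$ nor to any Besov space on $u$ itself, so $\Delta_{-1}u$ is not accessible through the Bernstein--Riesz trick. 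I would treat it separately by pairing $\Delta_{-1}u$, bounded in $L^{p_2}$, with the finitely many low-frequency blocks of $v$ via the other H\"older pair $(p_2,p_1)$; as these blocks contribute only to finitely many $\Delta_l R(u,v)$, the resulting term is controlled by $\|u\|_{L^{p_2}}\|v\|_{B^s_{p_1,r}}$. Summing the three contributions then yields the announced inequality.
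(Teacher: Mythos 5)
Your proposal is correct, and it is the standard route. The paper does not actually write out a proof of Theorem \ref{produs}: it only refers to the product laws in Bahouri--Chemin--Danchin, so your argument supplies precisely the details being omitted. The three estimates you give are the right ones, and you have correctly isolated the only point where this statement differs from the textbook product law $\left\Vert uv\right\Vert _{B_{p,r}^{s}}\lesssim\left\Vert u\right\Vert _{L^{\infty}}\left\Vert v\right\Vert _{B^{s}}+\left\Vert v\right\Vert _{L^{\infty}}\left\Vert u\right\Vert _{B^{s}}$, namely that $u$ itself is not assumed to lie in any Besov space, so every occurrence of a block $\Delta_{j}u$ with $j\geq0$ must be converted into $2^{-j}$ times blocks of $\nabla u$ (your Bernstein--Riesz lift, legitimate because $\varphi$ vanishes near the origin), while the single block $\Delta_{-1}u$ must be routed through $L^{p_{2}}$ and the other H\"older pair. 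This is also exactly the device the paper uses in its proof of Proposition \ref{comutneomogen}, where the splitting $u=\dot{S}_{-1}u+\tilde{u}$ plays the role of your separate treatment of the lowest block, so your proof is in the same spirit as the one piece of Littlewood--Paley analysis the paper does carry out. Two cosmetic remarks: the inequality in the statement should of course carry the constant $C$ announced in its first sentence, which your argument produces; and the restriction $s>0$ is used only in the remainder term, exactly where you invoke it.
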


All the above statements are given or immediate consequences of the results
presented in \cite{Dan1}, pages $107-108$.

\subsection{Commutator estimates}

This section is devoted to establish commutator-type estimates both in
homogeneous and in nonhomogeneous Besov spaces. We begin by stating the
following basic, yet very useful lemma:

\begin{lemma}
\label{Comutator1}Let us consider $\theta$ a $\mathcal{C}^{1}$ function on
$\mathbb{R}^{n}$ such that $\left(  1+\left\vert \cdot\right\vert \right)
\hat{\theta}\in L^{1}$. Let us also consider $p,q\in\left[  1,\infty\right]  $
such that:%
\[
\frac{1}{r}:=\frac{1}{p}+\frac{1}{q}\leq1.
\]
Then, there exists a constant $C$ such that for any Lipschitz function $a$
with gradient in $L^{p}$, any function $b\in L^{q}$ and any positive $\lambda
$:%
\[
\left\Vert \left[  \theta\left(  \lambda^{-1}D\right)  ,a\right]  b\right\Vert
_{L^{r}}\leq C\lambda^{-1}\left\Vert \nabla a\right\Vert _{L^{p}}\left\Vert
b\right\Vert _{L^{q}}.
\]
In particular, when $\theta=\varphi$ and $\lambda=2^{j}$ we get that:%
\[
\left\Vert \left[  \Delta_{j},a\right]  b\right\Vert _{L^{r}}\leq
C2^{-j}\left\Vert \nabla a\right\Vert _{L^{p}}\left\Vert b\right\Vert _{L^{q}%
}.
\]

\end{lemma}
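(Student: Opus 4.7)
The plan is to rewrite the commutator as an explicit convolution--type integral operator, then exploit the Lipschitz character of $a$ via a first--order Taylor expansion to produce the extra factor of $\lambda^{-1}$.

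First I would denote by $h := \mathcal{F}^{-1}\theta$ the convolution kernel, so that $\theta(\lambda^{-1}D)$ acts as convolution with $h_{\lambda}(x) = \lambda^{n} h(\lambda x)$. The standing hypothesis on $\theta$ ensures that both $h$ and $|\cdot| h$ belong to $L^{1}(\mathbb{R}^{n})$, and a direct change of variable gives
\[
\int_{\mathbb{R}^{n}} |z| \, |h_{\lambda}(z)| \, dz \;=\; \lambda^{-1} \int_{\mathbb{R}^{n}} |w| \, |h(w)| \, dw \;=:\; C \lambda^{-1}.
\]
This is where all the $\lambda$--dependence comes from.

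Second, I would write the commutator pointwise as
\[
[\theta(\lambda^{-1}D), a] b(x) \;=\; \int_{\mathbb{R}^{n}} h_{\lambda}(x-y) \bigl(a(y) - a(x)\bigr) b(y) \, dy,
\]
and then use the fact that a Lipschitz function is differentiable almost everywhere (Rademacher) together with the integral form of the fundamental theorem of calculus to write $a(y) - a(x) = \int_{0}^{1} (y-x) \cdot \nabla a\bigl(x + t(y-x)\bigr) \, dt$. Substituting and changing variables $z = y - x$ yields the representation
\[
[\theta(\lambda^{-1}D), a] b(x) \;=\; -\int_{0}^{1} \int_{\mathbb{R}^{n}} z \, h_{\lambda}(-z) \cdot \nabla a(x + tz) \, b(x + z) \, dz \, dt.
\]

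Third, I would take the $L^{r}$ norm in $x$ and apply Minkowski's integral inequality to pull the norm inside the $t$-- and $z$--integrals, then use Hölder's inequality with $\tfrac{1}{r} = \tfrac{1}{p} + \tfrac{1}{q}$, together with translation invariance of Lebesgue norms, to bound the integrand pointwise in $(t, z)$ by $|z| \, |h_{\lambda}(z)| \cdot \|\nabla a\|_{L^{p}} \|b\|_{L^{q}}$. Integrating in $(t, z)$ and invoking the kernel bound above gives exactly $C \lambda^{-1} \|\nabla a\|_{L^{p}} \|b\|_{L^{q}}$. The particular case $\theta = \varphi$, $\lambda = 2^{j}$ then follows instantly, since $\varphi \in \mathcal{D}(\mathcal{C})$ has $h = \mathcal{F}^{-1}\varphi$ in the Schwartz class, so all the weighted integrability conditions are trivially satisfied.

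There is no really serious obstacle here; the only points demanding care are the application of Fubini (justified by non-negativity of the integrand once absolute values are in place) and the use of the Lipschitz representation for $a(y) - a(x)$, which is valid almost everywhere and therefore for almost every $x$ after integration against $h_{\lambda}(x-y) b(y) \, dy$.
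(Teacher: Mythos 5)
Your proof is correct. The paper does not actually prove this lemma --- it only states it and points to the reference of Bahouri, Chemin and Danchin, where it appears as the classical commutator lemma --- and your argument (kernel representation of the commutator, first--order Taylor formula for the Lipschitz function $a$ to extract the factor $|z|$, then Minkowski plus H\"older with $\tfrac1r=\tfrac1p+\tfrac1q$ and the scaling identity $\int |z|\,|h_\lambda(z)|\,dz=\lambda^{-1}\int|w|\,|h(w)|\,dw$) is precisely that standard proof, so there is nothing to compare or to object to.
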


\begin{proposition}
\label{comutneomogen}Let us consider $s>0$ and $\left(  p,p_{1},r\right)
\in\left[  1,\infty\right]  ^{3}$ such that $p\leq p_{1}$. Also, let us
consider two tempered distributions $\left(  u,v\right)  $ such that $u\in
B_{\infty,\infty}^{M}\left(  \mathbb{R}^{n}\right)  $, $\nabla u\in L^{\infty
}\left(  \mathbb{R}^{n}\right)  \cap B_{p_{1},r}^{s-1}\left(  \mathbb{R}%
^{n}\right)  $ and $\nabla v\in L^{p_{2}}\left(  \mathbb{R}^{n}\right)  \cap
B_{p,r}^{s}\left(  \mathbb{R}^{n}\right)  $ where $M$ is some strictly
negative real number and $\frac{1}{p_{2}}=\frac{1}{p}-\frac{1}{p_{1}}$. We
denote by%
\[
R_{j}=[\Delta_{j},u]\partial^{\alpha}v=\Delta_{j}\left(  u\partial^{\alpha
}v\right)  -u\Delta_{j}\partial^{\alpha}v,
\]
where $\alpha\in\overline{1,n}$. Then, the following estimate holds true:%
\[
\left\Vert \left(  2^{js}\left\Vert R_{j}\right\Vert _{L^{p}}\right)
\right\Vert _{\ell^{r}(\mathbb{Z)}}\lesssim_{s}\left\Vert \nabla u\right\Vert
_{L^{\infty}}\left\Vert \nabla v\right\Vert _{B_{p,r}^{s-1}}+\left\Vert \nabla
v\right\Vert _{L^{p_{2}}}\left\Vert \nabla u\right\Vert _{B_{p_{1},r}^{s-1}}.
\]

\end{proposition}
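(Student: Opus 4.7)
The plan is to decompose the product $u\partial^{\alpha}v$ and its counterpart $u\Delta_{j}\partial^{\alpha}v$ through Bony's paraproduct formula
$ab = T_{a}b + T_{b}a + R(a,b),\quad T_{a}b=\sum_{k}S_{k-1}a\,\Delta_{k}b,\quad R(a,b)=\sum_{k}\sum_{|k-k'|\leq1}\Delta_{k}a\,\Delta_{k'}b,$
and then subtract the two decompositions. Using that $S_{k-1}$ commutes with $\Delta_{j}$ (both being Fourier multipliers), a direct algebraic rearrangement produces the three-piece splitting
\[
R_{j}=\underbrace{[\Delta_{j},T_{u}]\partial^{\alpha}v}_{R_{j}^{1}}+\underbrace{\sum_{k}[\Delta_{j},\Delta_{k}u]\bigl(S_{k-1}\partial^{\alpha}v\bigr)}_{R_{j}^{2}}+\underbrace{\Delta_{j}R(u,\partial^{\alpha}v)-R(u,\Delta_{j}\partial^{\alpha}v)}_{R_{j}^{3}}.
\]
Each piece will be estimated in $L^{p}$ and the bound multiplied by $2^{js}$ before $\ell^{r}(\mathbb{Z})$-summation.

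For $R_{j}^{1}=\sum_{k}[\Delta_{j},S_{k-1}u]\Delta_{k}\partial^{\alpha}v$ the product $S_{k-1}u\cdot\Delta_{k}\partial^{\alpha}v$ is spectrally localised around $2^{k}$, so the sum reduces to indices with $|j-k|\leq N_{0}$ for some fixed $N_{0}$. Applying Lemma~\ref{Comutator1} with the exponents $(L^{\infty},L^{p})$, combined with Bernstein's lemma to absorb $\partial^{\alpha}$ into a factor $2^{k}$, and then Proposition~\ref{propA}, yields
\[
\|[\Delta_{j},S_{k-1}u]\Delta_{k}\partial^{\alpha}v\|_{L^{p}}\lesssim 2^{-j}\|\nabla u\|_{L^{\infty}}\cdot 2^{k}\|\Delta_{k}v\|_{L^{p}}\lesssim c_{k}\,2^{-ks}\|\nabla u\|_{L^{\infty}}\|\nabla v\|_{B^{s-1}_{p,r}},
\]
with $(c_{k})_{k}$ of $\ell^{r}$-norm one. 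Summing over the finitely many relevant $k$'s, multiplying by $2^{js}$ and taking the $\ell^{r}$-norm produces the first term in the claimed bound. The piece $R_{j}^{2}$ is handled similarly: spectral localization of $\Delta_{k}u\cdot S_{k-1}\partial^{\alpha}v$ forces $|j-k|\leq N_{0}$, and Lemma~\ref{Comutator1} applied with the Hölder splitting $\frac{1}{p}=\frac{1}{p_{1}}+\frac{1}{p_{2}}$ gives
\[
\|[\Delta_{j},\Delta_{k}u](S_{k-1}\partial^{\alpha}v)\|_{L^{p}}\lesssim 2^{-j}\|\nabla\Delta_{k}u\|_{L^{p_{1}}}\|\partial^{\alpha}v\|_{L^{p_{2}}}\lesssim c_{k}\,2^{-ks}\|\nabla u\|_{B^{s-1}_{p_{1},r}}\|\nabla v\|_{L^{p_{2}}},
\]
which delivers the second admissible term.

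The hard part is the remainder piece $R_{j}^{3}$: spectral localization no longer forces $k$ to stay close to $j$ (the product $\Delta_{k}u\,\Delta_{k'}\partial^{\alpha}v$ with $|k-k'|\leq1$ has Fourier support in a ball rather than an annulus), so one must sum over all $k\geq j-N_{1}$ for some fixed $N_{1}$. I would treat the two summands separately, not as a commutator. For $\Delta_{j}R(u,\partial^{\alpha}v)$, Hölder and Bernstein yield, for $|k-k'|\leq1$,
\[
\|\Delta_{j}(\Delta_{k}u\,\Delta_{k'}\partial^{\alpha}v)\|_{L^{p}}\lesssim\|\Delta_{k}u\|_{L^{p_{1}}}\|\Delta_{k'}\partial^{\alpha}v\|_{L^{p_{2}}}\lesssim c_{k}\,2^{-k(s-1)}\|\nabla u\|_{B^{s-1}_{p_{1},r}}\|\nabla v\|_{L^{p_{2}}},
\]
and multiplication by $2^{js}$ turns the $k$-summation into a discrete convolution of the form $\sum_{k\geq j-N_{1}}2^{(j-k)s}c_{k}$, whose convergence \emph{requires} the hypothesis $s>0$ and whose $\ell^{r}(\mathbb{Z})$-norm is controlled by $\|(c_{k})\|_{\ell^{r}}$ via a Young inequality on $\mathbb{Z}$. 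The second summand $R(u,\Delta_{j}\partial^{\alpha}v)$ is easier, since $\Delta_{k'}\Delta_{j}\partial^{\alpha}v$ vanishes unless $|k'-j|\leq1$, which trivialises the $k'$-summation and leaves only a one-sided sum in $k$, again absolutely summable thanks to $s>0$.

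Collecting the three contributions and repackaging the resulting sequences into a single $\ell^{r}$-summable family in $j$ closes the proof. The only non-routine step is the diagonal summation in $R_{j}^{3}$, which is precisely where the positivity of $s$ is essential; the contributions of $R_{j}^{1}$ and $R_{j}^{2}$ are automatic consequences of the spectral localization and Lemma~\ref{Comutator1}.
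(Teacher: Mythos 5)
Your decomposition is the Bony splitting and is algebraically correct, and your treatment of the paraproduct commutator $[\Delta_{j},T_{u}]\partial^{\alpha}v$ matches the paper's. But there are two genuine gaps.

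First, the localization claim for your $R_{j}^{2}=\sum_{k}[\Delta_{j},\Delta_{k}u](S_{k-1}\partial^{\alpha}v)$ is false. The half $\Delta_{j}(\Delta_{k}u\,S_{k-1}\partial^{\alpha}v)$ does vanish unless $|j-k|\leq N_{0}$, but the other half $\Delta_{k}u\,S_{k-1}\Delta_{j}\partial^{\alpha}v$ equals $\Delta_{k}u\,\Delta_{j}\partial^{\alpha}v$ for every $k\geq j+3$, so the sum is genuinely one--sided ($k\gtrsim j$, unbounded above). With the bound you take from Lemma \ref{Comutator1}, namely $2^{-j}\|\nabla\Delta_{k}u\|_{L^{p_{1}}}\|\nabla v\|_{L^{p_{2}}}$, the weight after multiplying by $2^{js}$ is $2^{(j-k)(s-1)}c_{k}$ summed over $k\geq j-N_{0}$, which diverges for $0<s\leq1$; your argument only closes for $s>1$. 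The paper avoids this by \emph{not} keeping these two halves together as a commutator: it estimates $\Delta_{j}T_{\partial^{\alpha}v}\tilde{u}$ (a genuinely diagonal sum) and $T_{\Delta_{j}\partial^{\alpha}v}\tilde{u}$ (the one--sided sum) separately, and for the latter gains a full factor $2^{-k}$ from $\|\Delta_{k}\tilde{u}\|_{L^{p_{1}}}\lesssim2^{-k}\|\Delta_{k}\nabla u\|_{L^{p_{1}}}$, producing the summable weight $2^{(j-k)s}$.

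Second, that reverse Bernstein step $\|\Delta_{k}u\|\lesssim2^{-k}\|\Delta_{k}\nabla u\|$, which you also use implicitly in the remainder piece, is only valid for annulus--supported blocks, i.e.\ it fails for the low--frequency block of $u$. This is why the paper first writes $\tilde{u}=u-\dot{S}_{-1}u$ and isolates the extra term $R_{j}^{6}=[\Delta_{j},\dot S_{-1}u]\partial^{\alpha}v$, which is then controlled by Lemma \ref{Comutator1} using only $\|\nabla u\|_{L^{\infty}}$ (the hypothesis $u\in B_{\infty,\infty}^{M}$, $M<0$, is there precisely to make sense of this low--frequency part, whose own size never enters the final estimate). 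Your three--piece splitting has no mechanism for the low frequencies of $u$, and without one you cannot avoid norms of $u$ itself leaking into the right--hand side. There is also a small arithmetic slip in your remainder estimate: $\|\Delta_{k}u\|_{L^{p_{1}}}$ should be bounded by $c_{k}2^{-ks}\|\nabla u\|_{B_{p_{1},r}^{s-1}}$, not $c_{k}2^{-k(s-1)}(\cdots)$, for the convolution weight $2^{(j-k)s}$ you then invoke to be correct.
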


\begin{proof}
The main ingredients of the proof are Lemma \ref{Comutator1} and the Bony
decomposition of the product into the paraproduct and remainder. For the
definitions and some properties we refer to \cite{Dan1} pages 106-108. We
begin by considering%
\[
\tilde{u}=u-\dot{S}_{-1}u
\]
and we write that%
\[
R_{j}=\sum_{i=1,6}R_{j}^{i}%
\]
where%
\[
\left\{
\begin{array}
[c]{ll}%
R_{j}^{1}=\Delta_{j}\left(  T_{\tilde{u}}\partial^{\alpha}v\right)
-T_{\tilde{u}}\Delta_{j}\partial^{\alpha}v, & R_{j}^{2}=\Delta_{j}\left(
T_{\partial^{\alpha}v}\tilde{u}\right)  ,\\
R_{j}^{3}=T_{\Delta_{j}\partial^{\alpha}v}\tilde{u}, & R_{j}^{4}=\Delta
_{j}R\left(  \tilde{u},\partial^{\alpha}v\right)  ,\\
R_{j}^{5}=R\left(  \tilde{u},\Delta_{j}\partial^{\alpha}v\right)  . &
R_{j}^{6}=\left[  \Delta_{j},S_{-1}u\right]  \partial^{\alpha}v.
\end{array}
\right.
\]
We begin with $R_{j}^{1}$. We notice that $\left(  2^{lM}\left\Vert
S_{l-1}\tilde{u}\Delta_{l}\partial^{\alpha}v\right\Vert _{L^{\infty}}\right)
_{l\geq-1}\in\ell^{\infty}$ and
\[
\left\Vert \left(  2^{lM}\left\Vert S_{l-1}\tilde{u}\Delta_{l}\partial
^{\alpha}v\right\Vert _{L^{\infty}}\right)  _{l\geq-1}\right\Vert
_{\ell^{\infty}}\leq C\left\Vert u\right\Vert _{B_{\infty,\infty}^{M}%
}\left\Vert \nabla v\right\Vert _{L^{\infty}}%
\]
such that the series $\sum_{l\geq-1}S_{l-1}u\Delta_{l}\partial^{\alpha}v$ is
convergent. Thus, owing to Lemma \ref{Comutator1} we may write that%
\[
2^{js}\left\Vert R_{j}^{1}\right\Vert _{L^{p}}\leq\sum_{\left\vert
j-l\right\vert \leq4}2^{\left(  j-l\right)  s}2^{ls}\left\Vert \left[
\Delta_{j},S_{l-1}\tilde{u}\right]  \Delta_{l}\partial^{\alpha}v\right\Vert
_{L^{p}}\leq C\left\Vert \nabla u\right\Vert _{L^{\infty}}\sum_{\left\vert
j-l\right\vert \leq4}2^{\left(  j-l\right)  (s-1)}2^{l(s-1)}\left\Vert
\Delta_{l}\nabla v\right\Vert _{L^{p}}.
\]
From Young's inequality we conclude that%
\[
\left\Vert \left(  2^{js}\left\Vert R_{j}^{1}\right\Vert _{L^{p}}\right)
\right\Vert _{\ell^{r}(\mathbb{Z)}}\lesssim_{s}\left\Vert \nabla u\right\Vert
_{L^{\infty}}\left\Vert \nabla v\right\Vert _{B_{p,r}^{s-1}}.
\]
Let us pass to $R_{j}^{2}$. We have that:%
\begin{align*}
2^{js}\left\Vert R_{j}^{2}\right\Vert _{L^{p}}  &  \leq\sum_{\left\vert
j-l\right\vert \leq4}2^{\left(  j-l\right)  s}2^{ls}\left\Vert S_{l-1}%
\partial^{\alpha}v\Delta_{l}\tilde{u}\right\Vert _{L^{p}}\leq C\left\Vert
\nabla v\right\Vert _{L^{p_{2}}}\sum_{\left\vert j-l\right\vert \leq
4}2^{\left(  j-l\right)  s}2^{ls}\left\Vert \Delta_{l}\tilde{u}\right\Vert
_{L^{p_{1}}}\\
&  \leq C\left\Vert \nabla v\right\Vert _{L^{p_{2}}}\sum_{\left\vert
j-l\right\vert \leq4}2^{\left(  j-l\right)  s}2^{l(s-1)}\left\Vert \Delta
_{l}\nabla\tilde{u}\right\Vert _{L^{p_{1}}}.
\end{align*}
From Young's inequality we conclude that%
\[
\left\Vert \left(  2^{js}\left\Vert R_{j}^{2}\right\Vert _{L^{p}}\right)
\right\Vert _{\ell^{r}(\mathbb{Z)}}\lesssim_{s}\left\Vert \nabla v\right\Vert
_{L^{p_{2}}}\left\Vert \nabla u\right\Vert _{B_{p_{1},r}^{s-1}}.
\]
The third term is treated similar:%
\[
2^{js}\left\Vert R_{j}^{3}\right\Vert _{L^{p}}\leq\sum_{j-l\leq5}2^{\left(
j-l\right)  s}2^{ls}\left\Vert S_{l-1}\partial^{\alpha}v\Delta_{l}\tilde
{u}\right\Vert _{L^{p}}\leq C\left\Vert \nabla v\right\Vert _{L^{p_{2}}}%
\sum_{j-l\leq5}2^{\left(  j-l\right)  s}2^{l(s-1)}\left\Vert \Delta_{l}%
\nabla\tilde{u}\right\Vert _{L^{p_{1}}}.
\]
Because $s>0$ we can apply Young's inequality and obtain that%
\[
\left\Vert \left(  2^{js}\left\Vert R_{j}^{3}\right\Vert _{L^{p}}\right)
\right\Vert _{\ell^{r}(\mathbb{Z)}}\lesssim_{s}\left\Vert \nabla v\right\Vert
_{L^{p_{2}}}\left\Vert \nabla u\right\Vert _{B_{p_{1},r}^{s-1}}.
\]
As for the fourth term%
\[
2^{js}\left\Vert R_{j}^{4}\right\Vert _{L^{p}}\leq\sum_{j-l\leq5}2^{\left(
j-l\right)  s}2^{ls}\left\Vert \Delta_{l}\tilde{u}\right\Vert _{L^{\infty}%
}\left\Vert \tilde{\Delta}_{l}\partial^{\alpha}v\right\Vert _{L^{p}}\leq
C\sum_{j-l\leq5}2^{\left(  j-l\right)  s}2^{l(s-1)}\left\Vert \Delta_{l}%
\nabla\tilde{u}\right\Vert _{L^{\infty}}\left\Vert \tilde{\Delta}_{l}%
\partial^{\alpha}v\right\Vert _{L^{p}}.
\]
Because $s>0$ we can apply Young's inequality and obtain that:%
\[
\left\Vert \left(  2^{js}\left\Vert R_{j}^{4}\right\Vert _{L^{p}}\right)
\right\Vert _{\ell^{r}(\mathbb{Z)}}\lesssim_{s}\left\Vert \nabla u\right\Vert
_{L^{\infty}}\left\Vert \nabla v\right\Vert _{B_{p,r}^{s-1}}.
\]
In a similar manner we get that:%
\[
\left\Vert \left(  2^{js}\left\Vert R_{j}^{5}\right\Vert _{L^{p}}\right)
\right\Vert _{\ell^{r}(\mathbb{Z)}}\lesssim_{s}\left\Vert \nabla u\right\Vert
_{L^{\infty}}\left\Vert \nabla v\right\Vert _{B_{p,r}^{s-1}}.
\]
Finally, the sixth term is treated as it follows. First we write that there
exists a integer $N_{0}$ such that:%
\[
\left[  \Delta_{j},\dot{S}_{-1}u\right]  \partial^{\alpha}v=\sum_{l\geq
-1}\left[  \Delta_{j},\dot{S}_{-1}u\right]  \Delta_{l}\partial^{\alpha}%
v=\sum_{\left\vert l-j\right\vert \leq N_{0}}\left[  \Delta_{j},\dot{S}%
_{-1}u\right]  \Delta_{l}\partial^{\alpha}v.
\]
This comes from:%
\[
\mathrm{Supp}\left(  \dot{\Delta}_{j}\left(  \dot{S}_{-1}u\Delta_{l}%
\partial^{\alpha}v\right)  \right)  \subset2^{j}\mathcal{C\cap}\left(
B\left(  0,\frac{2}{3}\right)  +2^{l}\mathcal{C}\right)  \subset
2^{j}\mathcal{C\cap}2^{l}\left(  B\left(  0,\frac{2}{3}\right)  +\mathcal{C}%
\right)
\]
and from the fact that $B\left(  0,\frac{2}{3}\right)  +\mathcal{C}$ is an
annulus. Thus from Lemma \ref{Comutator1} we get that :%
\[
2^{js}\left\Vert R_{j}^{6}\right\Vert _{L^{p}}\leq\sum2^{\left(  j-l\right)
\left(  s-1\right)  }2^{l(s-1)}\left\Vert \nabla\dot{S}_{-1}u\right\Vert
_{L^{\infty}}\left\Vert \Delta_{l}\partial^{\alpha}v\right\Vert _{L^{p}}%
\]
and consequently%
\[
\left\Vert \left(  2^{js}\left\Vert R_{j}^{6}\right\Vert _{L^{p}}\right)
\right\Vert _{\ell^{r}(\mathbb{Z)}}\lesssim_{s}\left\Vert \nabla u\right\Vert
_{L^{\infty}}\left\Vert \nabla v\right\Vert _{B_{p,r}^{s-1}}.
\]
By putting together the estimates for $R_{j}^{1}$,$R_{j}^{2}$,$R_{j}^{3}%
$,$R_{j}^{4}$,$R_{j}^{5}$,$R_{j}^{6}$, we end the proof of Proposition
\ref{comutneomogen}.
\end{proof}

\subsection*{References}

\end{document}